\newtheorem{theorem}{Theorem}[section]
\newtheorem{lemma}[theorem]{Lemma} 
\newtheorem{proposition}[theorem]{Proposition} 
\theoremstyle{definition}
\newtheorem{definition}[theorem]{Definition}
\newtheorem{remark}[theorem]{Remark} 
\definecolor{pink}{rgb}{1,0,1}
\def\beq{\begin{eqnarray}}
\def\eeq{\end{eqnarray}}
\newcommand{\nn}{\nonumber}
\newcommand*{\GL}{\text{GL}}
\newcommand*{\T}{\mathbb{T}}
\newcommand*{\R}{\mathbb{R}}
\newcommand*{\N}{\mathbb{N}}
\newcommand*{\C}{\mathbb{C}}
\newcommand*{\Z}{\mathbb{Z}}
\newcommand{\pa}{\partial}
\begin{document}

%%% TO DO:  Update the arxiv version maybe?  %%%  

\title{112 years of listening to Riemannian manifolds}
\author[G.\@ M\aa rdby]{Gustav M\aa rdby}
\address{Mathematical Sciences, Chalmers University of Technology and University
of Gothenburg, 412 96 Gothenburg, Sweden}
\email{mardby@chalmers.se}

\author[J.\@ Rowlett]{Julie Rowlett}
\address{Mathematical Sciences, Chalmers University of Technology and University
of Gothenburg, 412 96 Gothenburg, Sweden}
\email{julie.rowlett@chalmers.se}

\maketitle

\begin{abstract}
In 1910, Hendrik Antoon Lorentz delved into the enigmatic Laplace eigenvalue equation, also known as the Helmholtz equation, pondering to what extent the geometry in which one solves the equation can be recovered from knowledge of the eigenvalues. Lorentz, inspired by physical and musical analogies, conjectured a fundamental relationship between eigenvalues, domain volume, and dimensionality. While his conjecture initially seemed insurmountable, Hermann Weyl's groundbreaking proof in 1912 illuminated the deep connection between eigenvalues and geometric properties. Over the ensuing 112 years, mathematicians and physicists have continued to decipher the intricate interplay between eigenvalues and geometry. From Weyl's law to Milnor's example of isospectral non-isometric flat tori, and Kac's inspiring question about hearing the shape of a drum, the field has witnessed remarkable progress, uncovering spectral invariants and advancing our understanding of geometric properties discernible through eigenvalues.  We present an overview of this field amenable to both physicists and mathematicians.
\end{abstract}

\section{Introduction} \label{sec:introduction}
In 1910 the Dutch physicist Hendrik Antoon Lorentz gave five lectures under the title ``Old and new problems in physics'' at the University of Göttingen.  One of the central topics of his lectures is the Helmholtz equation, the eigenvalue equation for the Laplace operator:  \begin{equation} \label{eq:eigenvalue_problem}
    \Delta f = \lambda f, \quad \textrm{together with boundary conditions in case of boundary.}
\end{equation}
Here, $\Delta$ is the Laplace operator, which in $n$-dimensional Euclidean space $\R^n$ is 
\begin{equation} \label{eq:Laplace_R^n}
    \Delta = -\sum_{i=1}^n \frac{\partial^2}{\partial x_i^2}.
\end{equation}
Solving the Helmholtz equation amounts to finding all eigenvalues $\lambda$ such that there exists a solution $f$ to \eqref{eq:eigenvalue_problem}.  When one solves this problem on a bounded domain in Euclidean space, a boundary condition must also be imposed.  Perhaps the most common is the Dirichlet boundary condition, that requires the solution $f$ to vanish on the boundary of the domain.  At the time of Lorentz's lectures, experts could solve this equation in only a few types of domains:  boxes (products of intervals), spheres, disks, wedges, and products of these.  For a more general bounded domain in Euclidean space, the form of the solution and the collection of eigenvalues was still a mystery.       

Nonetheless, using reasoning from physics, Lorentz made a famous conjecture at the end of his fourth lecture:  
\beq \lim_{\Lambda \to \infty} \frac{ \# \{ \lambda \leq \Lambda\} }{|\Omega| \Lambda^{n/2}} = C > 0. \label{eq:lorentzs_con} 
\eeq 
In this equation, $\# \{ \lambda \leq \Lambda\}$ is the number of Laplace eigenvalues less than or equal to $\Lambda$, considered in a bounded domain $\Omega \subset \R^n$ taking the Dirichlet boundary condition.  Here, $|\Omega|$ is the $n$-dimensional volume of $\Omega$, and $C$ is a constant that depends only on the dimension $n$.  Lorentz had the following physical - and musical - explanation for this conjecture. Consider an enclosure with a perfectly reflecting surface, which produces electromagnetic waves analogous to tones of an organ pipe. To compute the energy in a frequency interval $dv$, one may calculate the number of overtones which lie between the frequencies $v$ and $v+dv$ and then multiply this by the energy at the frequency $v$. 
Now, if the conjecture is true, then the number of sufficiently high overtones between $v$ and $v+dv$ should be independent of the shape of the enclosure and only proportional to its volume \cite[p. 4]{kac1966can}.

Lorentz could rigorously prove that the conjecture is true if the domain were an $n$-dimensional Euclidean box. So, he reasoned based on the physical interpretation  that the same asymptotic relationship should hold for any bounded domain. This problem had already been formulated a month eariler by Arnold Sommerfeld at a lecture in Köningsberg. David Hilbert, who was present at Lorentz's lecture, was convinced that \eqref{eq:lorentzs_con} would not be proven in his lifetime. However, using techniques developed by Hilbert himself, Hermann Weyl, who was also present at the lecture and was a student of Hilbert, proved Lorentz's conjecture less than two years later in 1912 \cite{weyl1912asymptotische}!  Thus, Weyl proved that the eigenvalues of the Laplace operator acting on a bounded Euclidean domain with the Dirichlet boundary condition detect both the  dimension $n$ as well as the $n$-dimensional volume of the domain. Inspired by Lorentz's organ pipe, one could say that these eigenvalues can be \em heard, \em and describe their study as \em listening \em to them.  In this way we could also describe those quantities that are specified by knowing the eigenvalues can also be \em heard.  \em  The title of this article refers to 112 years of investigating the interplay between the eigenvalues and the geometric setting in which one is solving the Helmholtz equation, because one can in general study the Helmholtz equation on an $n$-dimensional Riemannian manifold $(M,g)$.  

In this case, the Laplace operator 
is 
\begin{equation} \label{eq:Laplace_(M,g)}
    \Delta = -\sum_{i,j} \frac{1}{\det(g)}\partial_i g^{ij}\sqrt{\det(g)}\partial_j.
\end{equation}

To further connect the Helmholtz equation to our listening analogy, note that sounds are caused by vibration which is mathematically described by the wave equation. A common way to solve the wave equation is to use separation of the time and space variables. When one does this, the equation for the space variables turns into the eigenvalue problem \eqref{eq:eigenvalue_problem}.  The spectrum of $(M,g)$ is the set of all eigenvalues.  From a physical point of view, the spectrum represents the frequencies that $M$ can produce. Therefore, it is indeed reasonable to say that a quantity may be “heard” if it is determined by the eigenvalues. In other words, the quantity can be heard precisely if it is a spectral invariant.  In the 112 years since Weyl's discovery, mathematicians and physicists have investigated what geometric properties can be `heard.'  In other words, we have been listening to Riemannian manifolds to understand to what extent their geometry, or more colloquially, their \em shape \em can be \em heard. \em  We do not take credit for this language, as it was inspired by Lorentz and later popularized by Kac \cite{kac1966can}.  Interestingly, Kac's description and language came from Bochner, for whom the Bochner Laplacian is named  \cite{morin2020spectral}. 

The intended audience of this article ranges from experts to those who are new to the field. For the beginners, we will give an introduction to this rich field of mathematics. For the experts, we will give a survey which presents numerous works from different niches of the field, some of which even experts might not be aware. Therefore, not only do we give a thorough overview of the major results of the past 112 years, but we also convey essential techniques by explaining the proofs of some of these results. We start in \S \ref{s:weyl_law} with the proof of Lorentz's conjecture \eqref{eq:lorentzs_con}, a fact that is now known as Weyl's law.  We show how one can prove this fact using variational principles and bracketing arguments, as Weyl did 112 years ago.  Although several other proofs are now available \cite{ivrii1981asymptotics}, \cite{ivrii2016100}, \cite{canzani_galkowski2023}, \cite{canzani_galkowski_beams}, Weyl's original proof remains instructive and robustly applicable to settings in which similar variational principles are known to hold. 

Weyl's law shows that the dimension and the volume are spectral invariants, in other words one can hear the dimension and volume of a bounded domain.  It took over 40 years to discover the next geometric spectral invariant.  In 1954 the Swedish mathematician Åke Pleijel showed that one can hear the $(n-1)$-dimensional volume of the boundary of such a domain \cite{pleijel1954study}.  Of course, the field was not dormant during these decades, so what were people working on, and what results were they obtaining?  Since one cannot compute the eigenvalues of arbitrarily shaped domains, it is natural to turn to more computationally accessible examples.  One large class of examples is flat tori, compact smooth Riemannian manifolds obtained by taking the quotient of $\R^n$ by a full-rank lattice with Riemannian metric inherited from the flat Euclidean metric.  For these examples, one can show that the eigenvalues are equal to the representation numbers of certain naturally associated quadratic forms.  In the years from 1912 to 1954, the theory of quadratic forms and representation numbers developed rapidly and therewith our understanding of the spectra of flat tori.  In \S \ref{sec:quadratic_forms} we explain these connections and how they were used to construct Milnor's example of 16 dimensional isospectral non-isometric flat tori.  Milnor's example helped to inspire Kac's famous article \textit{Can one hear the shape of a drum?}

%give an overview of the activity in this field during the time from Weyl's law to Milnor's example of 

In the aforementioned article Kac focused on the Helmholtz equation in the special context of bounded, two-dimensional domains, so these domains could be seen as the heads of drums.  Then, one can ask, if these drums \em sound identical \em in the mathematical sense that their Laplace spectra are identical, then are the drums also the same shape?  In other words, is the entire geometry of a two-dimensional domain a spectral invariant?  Although Kac did not answer this question, he gave heuristic physical arguments to show that one would expect to be able to hear the Euler characteristic of a smoothly bounded domain.  Kac was forthright in explaining that his arguments were not fully rigorous.  Interestingly, his intuition was entirely correct, and we will show in \S \ref{sec:kac} that all of Kac’s arguments can be made rigorous.  Kac's paper has inspired a wealth of research in the field, an overview of which we give in \S \ref{sec:Kac_legacy} and \S\ref{sec:Kac_positive}. In \S \ref{sec:outlook}, we leave the reader with a brief look towards the future of the field and what one may expect or hope can  be achieved. 

\section*{Acknowledgements}
 We are grateful to Jeff Galkowski and Javier Gomez-Serrano for suggesting further references to include in this work.  
 
 %also would like to express our thanks - in advance - to any colleagues who indicate  references we may have inadvertently overlooked in our pre-print. We shall gladly include these in a subsequent revision.  

\section{Weyl's asymptotic law} \label{s:weyl_law}
In 1910, Weyl was a relatively young mathematician who had recently completed his doctoral studies. He earned his Ph.D. in mathematics in 1908 at the University of Göttingen under the supervision of David Hilbert, whom he greatly admired. Weyl's career was just beginning to take shape at that time, and he would go on to make significant contributions to various fields in mathematics and theoretical physics in the following decades, becoming a renowned figure in the world of mathematics. 

Weyl was very interested in mathematical physics at this point of time, particularly in the study of quantum mechanics and the behavior of quantum systems. When Weyl heard Lorentz mention the problem
about the volume being a spectral invariant, it captured his interest as he quickly realized that it was essential for understanding the behavior of quantum mechanical systems. He aimed to establish this law as a fundamental mathematical principle that could provide insights into the spectral behavior of quantum systems, helping to bride the gap between mathematics and theoretical physics. This was a groundbreaking and evolving field of study during this time \cite{newman1957hermann}.

\subsection{Preliminaries} 
Before we show how Weyl eventually solved the problem formulated by Lorentz, we establish key notations and preliminaries. A \textit{domain} $\Omega \subset \R^n$ is a connected open subset of $\R^n$. We always assume that $\overline{\Omega}$ is compact. The boundary of $\Omega$ is denoted by $\partial\Omega$. For a domain $\Omega \subset \R^n$, we let $|\Omega|$ denote the $n$-dimensional volume of $\Omega$ and $|\partial\Omega|$ the $(n-1)$-dimensional volume of the boundary of $\Omega$. In particular, if $n = 2$ then $|\Omega|$ and $|\partial\Omega|$ denote the area and the perimeter of $\Omega$, respectively.

The \textit{Laplace operator} on $\R^n$ is given by \eqref{eq:Laplace_R^n}.  For a function $f : \Omega \to \C$, we write $f \in \mathscr{C}^\infty(\Omega)$ if $f$ is infinitely differentiable, that is, if $f$ is $N$ times differentiable for every $N \in \N$. Moreover, we write $f \in \mathscr{C}_c^\infty(\Omega)$ if $f \in \mathscr{C}^\infty(\Omega)$ and the support of $f$ is compactly contained in $\Omega$. 

A function $f \in \mathscr{C}^\infty(\Omega)$ is said to satisfy \textit{Dirichlet boundary conditions} if $f = 0$ on $\partial\Omega$. Similarly, $f$ satisfies \textit{Neumann boundary conditions} if
    \begin{equation*}
        \frac{\partial f}{\partial n} = 0 \text{ on } \partial\Omega,
    \end{equation*}
where $\frac{\partial f}{\partial n}$ is the normal derivative of $f$ pointing outwards from $\Omega$. If $f$ is not identically zero, satisfies Dirichlet boundary conditions, and \eqref{eq:eigenvalue_problem} holds for some $\lambda \in \C$, then $\lambda$ is an \textit{eigenvalue} with corresponding \textit{eigenfunction} $f$. The \textit{Dirichlet/Neumann spectrum} of $\Omega$ is the set of all eigenvalues with eigenfunctions satisfying the Dirichlet/Neumann boundary condition, respectively.

For one dimensional domains, it is perfectly sufficient to solve the Helmholz equation by searching for smooth functions that satisfy either the Dirichlet or Neumann boundary conditions.  However, for domains in higher dimensions, especially in cases when the boundary of the domain is not smooth, to rigorously define the Laplace operator, one must specify the Hilbert space of functions on which the operator acts and is essentially self-adjoint. For this reason and to be inclusive of all readers we recall

\begin{definition} \label{def:L2_Hk}
For a domain $\Omega \subset \R^n$, $L^2(\Omega)$ is defined as the set of equivalence classes consisting of Lebesgue measurable functions $f : \Omega \to \C$ such that the Lebesgue integral $\int_\Omega |f(x)|^2dx < \infty$, where two functions $f,g$ are equivalent if
\begin{equation*}
\{x \in \Omega : f(x) \neq g(x)\}
\end{equation*}
has Lebesgue measure 0. We write $f \in L^2(\Omega)$ to indicate that there exists an equivalence class in $L^2(\Omega)$ containing $f$. If $f \in L^2(\Omega)$ and in addition the weak partial derivatives of $f$ of order up to $k$ exist and are in $ L^2(\Omega)$, then we write $f \in H^k(\Omega)$. For further details of these Hilbert spaces we refer to \cite[p. 51]{folland1999real}. The space $H^1 _0(\Omega)$ is the completion of $\mathscr C^\infty _c (\Omega) $ with respect to the norm of the Hilbert space $H^1 (\Omega)$ (which properly contains $\mathscr C^\infty _c (\Omega)$). 
\end{definition}

Now let $(M,g)$ be a compact Riemannian manifold, possibly with boundary.  The Laplace operator is defined in \eqref{eq:Laplace_(M,g)}.  If $\pa M = \emptyset$ then the operator acts on $ H^2(M)$, or if $\pa M \neq \emptyset$ and the Laplace operator acts on $H^2(M)$ we refer to this as the Neumann Laplace operator.  The Dirichlet Laplace operator acts on $ H^1 _0 (M) \cap H^2(M)$.  These definitions are completely analogous for (bounded) domains in $\R^n$.  In all of the aforementioned cases, the eigenvalues are discrete and non-negative, forming a countable set in $[0, \infty)$.   We let $N(\lambda)$ denote the number of eigenvalues less than or equal to $\lambda$, 
    \begin{equation} \label{eq:N_lambda}
        N(\lambda) = \#\{n \in \N : \lambda_n \leq \lambda\}.
    \end{equation} 
Here we count multiplicity, so that an eigenvalue with, say, $m$ linearly independent eigenfunctions is counted $m$ times. 

To state Weyl's law, we recall the notion of asymptotic equivalence of functions.  Two functions $f,g : \R \to \C$ are \textit{asymptotically equal} as $x \to \infty$ if
    \begin{equation*}
        \lim_{x \to \infty} \frac{f(x)}{g(x)} = 1.
    \end{equation*}
We write this as $f(x) \sim g(x), \,\,x \to \infty$. Functions being asymptotically equal as $x \to 0$ are defined analogously.

\subsection{Weyl's law} \label{sec:Weyl_approx}
Here we show how Weyl proved what is now known as \textit{Weyl's law}. In two dimensions, Weyl's law says that
\begin{equation} \label{eq:Weyl}
    \lim_{\lambda \to \infty} \frac{N(\lambda)}{\lambda} = \frac{|\Omega|}{4\pi}.
\end{equation}

One key idea needed to prove \eqref{eq:Weyl} is to cover $\Omega$ by rectangles and consider the eigenvalues on those. This is useful because on rectangles the eigenvalues and eigenfunctions can be computed analytically. Indeed, suppose

$\Omega = (0, l_1) \times (0, l_2)$ for some $l_1, l_2 > 0$ as shown in Figure \ref{fig:rectangle_domain}. Consider the eigenvalue problem with Dirichlet boundary conditions: 
\begin{equation} \label{eq:eigenvalue_rectangle}
        \begin{cases}
            \Delta U(x,y) = \lambda U(x,y) \text{ in } (0, l_1) \times (0, l_2), \\
            U(x,y) = 0 \text{ on } \partial\Omega.
        \end{cases}
\end{equation}
\begin{figure} 
    \centering
    \begin{tikzpicture}[x=1cm,y=0.4cm]
    \draw[<->] (-1,0)--(7,0); 
    \draw[<->] (0,-2)--(0,8); 
    \draw[-] (5,0)--(5,6); 
    \draw[-] (0,6)--(5,6); 
    \draw (5,0) circle[radius=2pt]; and \fill (5,0) node[below = 2pt]{$(l_1,0)$} circle[radius=2pt];
    \draw (0,6) circle[radius=2pt]; and \fill (0,6) node[left = 2pt]{$(0,l_2)$} circle[radius=2pt];
    \draw (5,6) circle[radius=2pt]; and \fill (5,6) node[above = 2pt]{$(l_1,l_2)$} circle[radius=2pt];
    \draw (2.5,2) circle[radius=0pt]; and \fill (2.5,2) node[above = 2pt]{$\Omega$} circle[radius=0pt];
\end{tikzpicture}
    \caption{This domain $\Omega$ is a rectangle with side lengths $l_1$ and $l_2$.}
    \label{fig:rectangle_domain}
\end{figure}
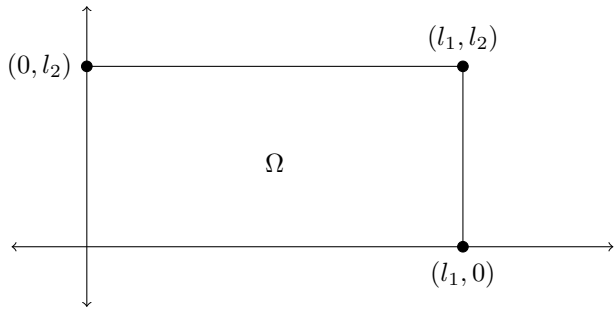
We solve this by separating variables. In other words, we look for solutions of the form $X(x)Y(y)$. Inserting this into \eqref{eq:eigenvalue_rectangle} gives
\begin{equation*} 
        X''(x)Y(y) + X(x)Y''(y) + \lambda X(x)Y(y) = 0
\end{equation*}
or
\begin{equation*}
    \frac{X''(x)}{X(x)} = -\frac{Y''(y)}{Y(y)} - \lambda.
\end{equation*}
The left side depends only on $x$ whereas the right side depends only on $y$ therefore both sides are constant.  They are essentially the same equation, which is a regular Sturm-Liouville problem, thanks to the boundary condition.  For $X$ this can be written, for some constant $\mu$, as 
\[ X''(x) = \mu X(x), \quad X(0) = 0, \quad X(\ell_1) = 0.\]
If $\mu = 0$, then the equation becomes $X''(x) = 0$, whose general solution is $X(x) = Ax + B$. The boundary conditions immediately give that $A = B = 0$, so $X(x) = 0$ is the only solution in this case. If $\mu \neq 0$, then all solutions are linear combinations of $e^{x \pm \sqrt \mu }.$  It is straightforward to show that the only non-zero linear combination that satisfies the boundary condition is a scalar multiple of $\sin(m \pi x/\ell_1)$ with corresponding $\mu_m = - m^2 \pi^2/\ell_1^2$ for $n \in \Z \setminus \{0\}$.  Since $\sin(-x)$ and $\sin(x)$ are linearly dependent, we simply set $X_m(x) = \sin(m\pi x/\ell_1)$ for $m \in \N$. Proceeding analogously we find $Y_n (y) = \sin(n \pi x / \ell_2)$ for $n \in \N$.  The eigenvalues and corresponding eigenfunctions for the rectangle are then respectively 
\beq \lambda_{m,n} = \frac{m^2 \pi^2}{\ell_1^2} + \frac{n^2 \pi^2}{\ell_2^2}, \quad \sin \left( \frac{m \pi x}{\ell_1} \right) \sin \left( \frac{n \pi y}{\ell_2} \right), \quad m, n \in \N.  \label{eq:d_ev_rect} \eeq
By \cite[p. 83]{larsson2003partial} these eigenfunctions constitute an orthogonal basis of $L^2((0, l_1) \times (0, l_2))$, and therefore we have found all eigenfunctions (uniquely up to multiplication by non-zero scalars) and therewith all eigenvalues. 

Now assume that \eqref{eq:eigenvalue_rectangle} has the Neumann boundary conditions. Then we solve the equation in the same way 
and obtain eigenvalues and corresponding eigenfunctions 
\beq \lambda_{m,n} = \frac{m^2 \pi^2}{\ell_1^2} + \frac{n^2 \pi^2}{\ell_2^2}, \quad \cos \left( \frac{m \pi x}{\ell_1} \right) \cos \left( \frac{n \pi y}{\ell_2} \right), \quad m, n \in \Z_{\geq 0}.  \label{eq:n_ev_rect} \eeq
Similarly, in this case these eigenfunctions constitute an orthogonal basis of $L^2((0, l_1) \times (0, l_2))$, and therefore we have found all eigenfunctions (uniquely up to multiplication by non-zero scalars) and therewith all eigenvalues.  Knowing the eigenvalues and eigenfunctions on rectangles, we can prove that Weyl's law holds on them for both the Dirichlet and Neumann boundary conditions. 

\begin{proposition} \label{prop:Weyl_rectangles}
    Let $I = (a_1, b_1) \times (a_2, b_2) \subset \mathbb{R}^2$ be a rectangle. Then both the Dirichlet and Neumann eigenvalues on $I$ satisfy 
    \begin{equation*}
        \lim_{\lambda \to \infty} \frac{N(\lambda)}{\lambda} = \frac{|I|}{4\pi}.
    \end{equation*}
\end{proposition}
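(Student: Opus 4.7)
The plan is to reduce both eigenvalue counting problems to a standard lattice-point-in-an-ellipse estimate, exploiting the explicit formulas \eqref{eq:d_ev_rect} and \eqref{eq:n_ev_rect}. First I would observe that the Laplacian is translation invariant, so without loss of generality I can assume $a_1 = a_2 = 0$, writing $\ell_1 = b_1 - a_1$ and $\ell_2 = b_2 - a_2$. Then by \eqref{eq:d_ev_rect}, the Dirichlet counting function becomes
\begin{equation*}
N(\lambda) = \#\Bigl\{ (m,n) \in \N \times \N : \frac{m^2}{\ell_1^2} + \frac{n^2}{\ell_2^2} \leq \frac{\lambda}{\pi^2} \Bigr\},
\end{equation*}
i.e., the number of positive-integer lattice points lying in the first-quadrant quarter of the ellipse $E_\lambda = \bigl\{(x,y) : x,y \geq 0,\ x^2/\ell_1^2 + y^2/\ell_2^2 \leq \lambda/\pi^2\bigr\}$, whose area is $\tfrac{1}{4}\pi \cdot \ell_1\ell_2 \cdot \lambda/\pi^2 = |I|\lambda/(4\pi)$.

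The key step is then a standard area-approximation argument. To each lattice point $(m,n)$ contributing to $N(\lambda)$ I would associate the unit square $S_{m,n} = [m-1,m] \times [n-1,n]$. These squares are pairwise disjoint and their union is contained in a slightly enlarged region: every $S_{m,n}$ sits inside the dilated quarter-ellipse $E_\lambda^+$ where we replace $\lambda$ by some $\lambda + C\sqrt{\lambda}$ (because enlarging $E_\lambda$ by a band of width $\sqrt{2}$ suffices to absorb the squares). Conversely, every unit square fully contained in the smaller quarter-ellipse $E_\lambda^-$ (obtained by shrinking by a $\sqrt{2}$-band) corresponds to a counted lattice point. Since the band has area $O(\sqrt{\lambda})$, I obtain
\begin{equation*}
\frac{|I|\lambda}{4\pi} - C_1 \sqrt{\lambda} \leq N(\lambda) \leq \frac{|I|\lambda}{4\pi} + C_2 \sqrt{\lambda}.
\end{equation*}
Dividing by $\lambda$ and sending $\lambda \to \infty$ gives the claim for Dirichlet boundary conditions.

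For the Neumann case the only difference is that the indices $m,n$ now range over $\Z_{\geq 0}$ rather than $\N$, by \eqref{eq:n_ev_rect}. So $N^{\mathrm{Neu}}(\lambda)$ exceeds $N^{\mathrm{Dir}}(\lambda)$ by the number of lattice points on the two positive coordinate axes inside $E_\lambda$ plus the origin. These extra points lie on one-dimensional segments of length $O(\sqrt{\lambda})$, hence they contribute an additional $O(\sqrt{\lambda})$ term which is absorbed into the same error estimate. Thus the same leading asymptotic holds for Neumann eigenvalues.

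The potential obstacle is making the boundary-strip estimate genuinely rigorous: one must verify that the width needed to absorb every unit square associated to an interior lattice point really is a constant (independent of $\lambda$), and that the resulting annular region has area $O(\sqrt{\lambda})$ with a constant depending only on $\ell_1,\ell_2$. This is elementary but slightly tedious; once it is carried out, the rest of the proof is a two-line squeeze. I would also note in passing that the lattice-point argument here is exactly the Gauss circle problem specialized to an ellipse, so sharper remainder estimates exist, but the crude $O(\sqrt{\lambda})$ bound is all that is needed for Weyl's law.
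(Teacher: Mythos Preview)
Your proposal is correct and follows essentially the same approach as the paper: reduce to a rectangle with one corner at the origin, identify $N(\lambda)$ with a lattice-point count in a quarter ellipse, associate unit squares to lattice points, and bound the discrepancy from the ellipse area by a boundary term of order $\sqrt{\lambda}$. The paper handles the Dirichlet and Neumann cases together (noting only that the error terms absorb the axis points), whereas you spell out the $O(\sqrt{\lambda})$ axis contribution for Neumann explicitly, but the arguments are otherwise the same.
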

\begin{proof}
By possibly translating $I$, we may assume that $a_i = 0$ and $b_i = l_i$, for $i = 1, 2$.  The Dirichlet eigenvalues are given in \eqref{eq:d_ev_rect} while the Neumann eigenvalues are given in \eqref{eq:n_ev_rect}.  For any $R > 0$, note that $N(R^2)$ is the number of lattice points $p = (m, n)$ such that $(m\pi/l_1)^2 + (n\pi/l_2)^2 \leq R^2$. This is asymptotically equal to the area of the quarter ellipse
    \begin{equation*}
        E_R := \left\{(x,y) : \frac{x^2\pi^2}{l_1^2} + \frac{y^2\pi^2}{l_2^2} \leq R^2, \,\, x,y \geq 0 \right\},
    \end{equation*}
    namely $|E_R| = l_1l_2R^2/4\pi$.

    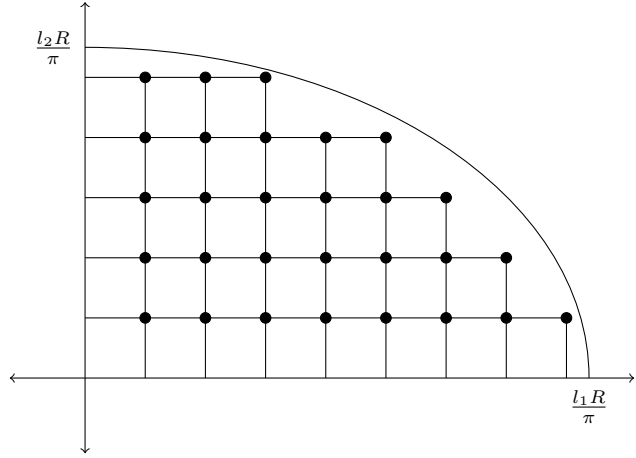
\begin{figure}
    \centering
    \begin{tikzpicture}
        \draw[<->] (-1,0)--(7.3,0); 
        \draw[<->] (0,-1)--(0,5); 
        %\draw (4.5,0) arc (0:90:4.5); % arc, see https://tex.stackexchange.com/questions/175016/how-is-arc-defined-in-tikz for explanation
        \draw (6.7,0) arc
        [
            start angle=0,
            end angle=90,
            x radius=6.7cm,
            y radius = 4.4cm
        ] ;
        \draw (0.8,0.8) circle[radius=2pt]; and \fill (0.8,0.8) node[above = 2pt]{} circle[radius=2pt];
        \draw (1.6,0.8) circle[radius=2pt]; and \fill (1.6,0.8) node[above = 2pt]{} circle[radius=2pt];
        \draw (2.4,0.8) circle[radius=2pt]; and \fill (2.4,0.8) node[above = 2pt]{} circle[radius=2pt];
        \draw (3.2,0.8) circle[radius=2pt]; and \fill (3.2,0.8) node[above = 2pt]{} circle[radius=2pt];
        \draw (4,0.8) circle[radius=2pt]; and \fill (4,0.8) node[above = 2pt]{} circle[radius=2pt];
        \draw (4.8,0.8) circle[radius=2pt]; and \fill (4.8,0.8) node[above = 2pt]{} circle[radius=2pt];
        \draw (5.6,0.8) circle[radius=2pt]; and \fill (5.6,0.8) node[above = 2pt]{} circle[radius=2pt];
        \draw (6.4,0.8) circle[radius=2pt]; and \fill (6.4,0.8) node[above = 2pt]{} circle[radius=2pt];
        \draw (0.8,1.6) circle[radius=2pt]; and \fill (0.8,1.6) node[above = 2pt]{} circle[radius=2pt];
        \draw (1.6,1.6) circle[radius=2pt]; and \fill (1.6,1.6) node[above = 2pt]{} circle[radius=2pt];
        \draw (2.4,1.6) circle[radius=2pt]; and \fill (2.4,1.6) node[above = 2pt]{} circle[radius=2pt];
        \draw (3.2,1.6) circle[radius=2pt]; and \fill (3.2,1.6) node[above = 2pt]{} circle[radius=2pt];
        \draw (4,1.6) circle[radius=2pt]; and \fill (4,1.6) node[above = 2pt]{} circle[radius=2pt];
        \draw (4.8,1.6) circle[radius=2pt]; and \fill (4.8,1.6) node[above = 2pt]{} circle[radius=2pt];
        \draw (5.6,1.6) circle[radius=2pt]; and \fill (5.6,1.6) node[above = 2pt]{} circle[radius=2pt];
        \draw (0.8,2.4) circle[radius=2pt]; and \fill (0.8,2.4) node[above = 2pt]{} circle[radius=2pt];
        \draw (1.6,2.4) circle[radius=2pt]; and \fill (1.6,2.4) node[above = 2pt]{} circle[radius=2pt];
        \draw (2.4,2.4) circle[radius=2pt]; and \fill (2.4,2.4) node[above = 2pt]{} circle[radius=2pt];
        \draw (3.2,2.4) circle[radius=2pt]; and \fill (3.2,2.4) node[above = 2pt]{} circle[radius=2pt];
        \draw (4,2.4) circle[radius=2pt]; and \fill (4,2.4) node[above = 2pt]{} circle[radius=2pt];
        \draw (4.8,2.4) circle[radius=2pt]; and \fill (4.8,2.4) node[above = 2pt]{} circle[radius=2pt];
        %\draw (5.6,2.4) circle[radius=2pt]; and \fill (5.6,2.4) node[above = 2pt]{} circle[radius=2pt];
        \draw (0.8,3.2) circle[radius=2pt]; and \fill (0.8,3.2) node[above = 2pt]{} circle[radius=2pt];
        \draw (1.6,3.2) circle[radius=2pt]; and \fill (1.6,3.2) node[above = 2pt]{} circle[radius=2pt];
        \draw (2.4,3.2) circle[radius=2pt]; and \fill (2.4,3.2) node[above = 2pt]{} circle[radius=2pt];
        \draw (3.2,3.2) circle[radius=2pt]; and \fill (3.2,3.2) node[above = 2pt]{} circle[radius=2pt];
        \draw (4,3.2) circle[radius=2pt]; and \fill (4,3.2) node[above = 2pt]{} circle[radius=2pt];
        \draw (0.8,4) circle[radius=2pt]; and \fill (0.8,4) node[above = 2pt]{} circle[radius=2pt];
        \draw (1.6,4) circle[radius=2pt]; and \fill (1.6,4) node[above = 2pt]{} circle[radius=2pt];
        \draw (2.4,4) circle[radius=2pt]; and \fill (2.4,4) node[above = 2pt]{} circle[radius=2pt];

        \draw (6.7,0) circle[radius=0pt]; and \fill (6.7,0) node[below = 1pt]{$\frac{l_1R}{\pi}$} circle[radius=0pt];
        \draw (0,4.4) circle[radius=0pt]; and \fill (0,4.4) node[left = 1pt]{$\frac{l_2R}{\pi}$} circle[radius=0pt];

        \draw[-] (0,0.8) -- (6.4,0.8);
        \draw[-] (0,1.6) -- (5.6,1.6);
        \draw[-] (0,2.4) -- (4.8,2.4);
        \draw[-] (0,3.2) -- (4,3.2);
        \draw[-] (0,4) -- (2.4,4);
        \draw[-] (0.8,0) -- (0.8,4);
        \draw[-] (1.6,0) -- (1.6,4);
        \draw[-] (2.4,0) -- (2.4,4);
        \draw[-] (3.2,0) -- (3.2,3.2);
        \draw[-] (4,0) -- (4,3.2);
        \draw[-] (4.8,0) -- (4.8,2.4);
        \draw[-] (5.6,0) -- (5.6,1.6);
        \draw[-] (6.4,0) -- (6.4,0.8);
        
    \end{tikzpicture}
    \caption{A quarter ellipse whose radii are proportional to $R$. Each lattice point in its interior corresponds to a unit square. The total area of the squares and the area of the quarter ellipse both grow on the order $R^2$ as $R \to \infty$, while the error is of the order $R$. Hence the number of lattice points in the interior of the quarter ellipse is asymptotically equal to its area as $R \to \infty$.}
    \label{fig:lattice_points}
\end{figure}

    To see this, note that each lattice point $(m,n)$ corresponds to a square of length 1 as shown in Figure \ref{fig:lattice_points}. Since two such squares have disjoint interiors, and each square has volume 1, $N(R^2)$ equals the number of squares which fit inside the quarter ellipse.  This deviates from its area by less than the length of its arc, which is of the order $R$ as $R \to \infty$. Similarly, number of the lattice points on the boundary of the ellipse is of the order $R$ as $R \to \infty$. It therefore  follows that the error terms which are of order $R$ can be gleefully discarded, and we have 
    \begin{equation*}
        \lim_{R \to \infty} \frac{N(R^2)}{|E_R|} = 1,
    \end{equation*} 
    
Thus, we get
    \begin{equation*}
        N(R^2) \sim \frac{l_1l_2R^2}{4\pi} \iff 
        N(\lambda) \sim \frac{l_1l_2\lambda}{4\pi}, \,\,\lambda \to \infty.
    \end{equation*}
    Noting that $|I| = l_1l_2$ completes the proof. 
\end{proof}

What we have done so far with Weyl's law can easily be generalized to $n$-dimensional Euclidean space. Indeed, if $\Omega = \prod_{i=1}^n (0,l_i) \subset \R^n$ is an $n$-dimensional rectangle, then for Dirichlet boundary conditions the eigenvalues become
\begin{equation*}
    %\lambda_{m_1,\dots,m_n} = 
    \sum_{i = 1}^n \frac{m_i^2\pi^2}{l_i^2}
\end{equation*}
with corresponding eigenfunctions
\begin{equation*}
    %U_{m_1,\dots,m_n}(x_1,\dots,x_n) = 
    \prod_{i=1}^n \sin(\frac{m_i \pi x_i}{l_i}), \,\, m_i \geq 1, \,\, x_i \in (0, l_i).
\end{equation*}
Similarly, for Neumann boundary conditions we get the same eigenvalues except that we now have $m_i \geq 0$ instead. The eigenfunctions become
\begin{equation*}
    %U_{m_1,\dots,m_n}(x_1,\dots,x_n) = 
    \prod_{i=1}^n \cos(\frac{m_i \pi x_i}{l_i}), \,\, m_i \geq 0, \,\, x_i \in (0, l_i).
\end{equation*}
This is proven by separating variables in the $n$-dimensional eigenvalue problem analogous to the two-dimensional case. From this, we can analogously deduce Weyl's law from the proof of Proposition \ref{prop:Weyl_rectangles}.  We see that $N(R^2)$ becomes asymptotically equal to the volume of 
\begin{equation*}
    \left\{(x_1,\dots,x_n) : \sum_{i=1}^n \frac{x_i^2\pi^2}{l_i^2} \leq R^2, \,\, x_i \geq 0\right\}.
\end{equation*}
This is, by a standard calculation,
\begin{equation*}
    \frac{\omega_nR^n}{(2\pi)^n} \prod_{i=1}^n l_i,
\end{equation*}
where $\omega_n$ denotes the volume of the $n$-dimensional unit ball.
Thus,
\begin{equation*}
    N(\lambda) \sim \frac{\omega_n|I|}{(2\pi)^n}\lambda^{n/2}, \,\, \lambda \to \infty,
\end{equation*}
which shows that
\begin{equation*}
    \lim_{\lambda \to \infty} \frac{N(\lambda)}{\lambda^{n/2}} = \frac{\omega_n|I|}{(2\pi)^n}.
\end{equation*}

Our next goal is to show that Weyl's law holds for arbitrary bounded, connected domains in $\R^n$. To do this, we need the variational principles, also known as Rayleigh-Ritz principles, which were developed around the same time as Lorentz's lectures in 1910 \cite{leissa}, \cite{Rayleigh2011}, \cite{Ritz1909}.

\begin{theorem} \label{Theorem:Variational_principles} %[\cite{bandara2018eigenvalue}]
    Let $\Omega$ be a domain with piecewise smooth boundary. If $\lambda_k$ denotes the $k$th eigenvalue of the Laplace operator with corresponding eigenfunction $f_k$, then
    \begin{align}
        \lambda_k &= 
        \begin{cases}
            \inf_{\substack{f \in H \\ f \neq 0}} \frac{\int_\Omega |\nabla f|^2}{\int_\Omega |f|^2} \text{ and is attained by } f_1, \text{ if } k = 1, \\
            \inf_{\substack{f \in H \\ f \neq 0}}  \left\{\frac{\int_\Omega |\nabla f|^2}{\int_\Omega |f|^2} : \langle f, f_j \rangle_{L^2(\Omega)} = 0, \,\,j = 1,\dots, k-1 \right\} \\\text{and is attained by } f_k, \text{ if } k \geq 2,
        \end{cases} \label{eq:variational_principle1}
        \\
        \lambda_k &= \sup_{\substack{L \subset H \\ \text{ dim } L = k-1}} \inf_{\substack{f \in L^{\perp} \\ f \neq 0}} \frac{\int_\Omega |\nabla f|^2}{\int_\Omega |f|^2}, \label{eq:variational_principle2}\\
        \lambda_k &= \inf_{\substack{L \subset H \\ \text{ dim } L = k}} \sup_{\substack{f \in L \\ f \neq 0}} \frac{\int_\Omega |\nabla f|^2}{\int_\Omega |f|^2}, \label{eq:variational_principle3}
    \end{align}
    where the integrals are taken with the respect to the Lebesgue measure. Here, $H = H_0^1(\Omega)$ for Dirichlet boundary conditions, and $H = H^1(\Omega)$ for Neumann boundary conditions. 
\end{theorem}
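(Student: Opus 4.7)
The plan is to invoke the spectral theorem for the Laplacian, viewed as an unbounded self-adjoint operator with compact resolvent on $L^2(\Omega)$, from which all three variational characterizations should fall out by elementary manipulations in the eigenbasis. As preliminary functional-analytic setup, I would record that under the stated hypotheses the Laplacian with Dirichlet or Neumann boundary conditions admits an orthonormal basis $\{f_k\}_{k \geq 1}$ of $L^2(\Omega)$ consisting of eigenfunctions, with eigenvalues $0 \leq \lambda_1 \leq \lambda_2 \leq \cdots \to \infty$. This is obtained by applying the Rellich--Kondrachov compact embedding $H \hookrightarrow L^2(\Omega)$ (where the piecewise smoothness of $\partial \Omega$ enters) to the Friedrichs extension associated with the quadratic form $f \mapsto \int_\Omega |\nabla f|^2$, together with Green's identity $\int_\Omega |\nabla f|^2 = \int_\Omega f \cdot \Delta f$ on the form domain $H$.

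For \eqref{eq:variational_principle1}, I would expand an arbitrary $f \in H$ in the eigenbasis as $f = \sum_{j \geq 1} c_j f_j$, so that by Parseval and Green's identity
\begin{equation*}
\frac{\int_\Omega |\nabla f|^2}{\int_\Omega |f|^2} = \frac{\sum_j \lambda_j |c_j|^2}{\sum_j |c_j|^2}.
\end{equation*}
Since $(\lambda_j)$ is nondecreasing, the right-hand side is minimized over nonzero $f$ by taking $c_1 \neq 0$ and $c_j = 0$ for $j \geq 2$, yielding $\lambda_1$ attained at $f = f_1$. Imposing $\langle f, f_j \rangle_{L^2(\Omega)} = 0$ for $j \leq k-1$ forces $c_1 = \cdots = c_{k-1} = 0$, so the quotient is bounded below by $\lambda_k$ and attained at $f = f_k$.

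For the minimax \eqref{eq:variational_principle3}, I would test with $L_0 := \mathrm{span}(f_1, \ldots, f_k)$, since the above expansion shows that any nonzero $f \in L_0$ has Rayleigh quotient at most $\lambda_k$, with supremum $\lambda_k$ attained at $f_k$; this gives the infimum bound $\leq \lambda_k$. For the reverse inequality, I would observe that for an arbitrary $k$-dimensional $L \subset H$ a dimension count forces $L \cap \mathrm{span}(f_1, \ldots, f_{k-1})^\perp \neq \{0\}$, producing a nonzero $f \in L$ orthogonal in $L^2$ to $f_1, \ldots, f_{k-1}$, for which the expansion yields Rayleigh quotient $\geq \lambda_k$. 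The maximin \eqref{eq:variational_principle2} follows by the symmetric argument, now with $L = \mathrm{span}(f_1, \ldots, f_{k-1})$ as the optimizing $(k-1)$-dimensional subspace, the reverse inequality coming from intersecting $L^\perp$ with $\mathrm{span}(f_1, \ldots, f_k)$ to produce a nonzero element on which the Rayleigh quotient is $\leq \lambda_k$.

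The hard part will be the preliminary functional-analytic setup rather than the variational arguments themselves: one must verify that the Dirichlet or Neumann Laplacian on a domain with merely piecewise smooth boundary possesses a complete orthonormal basis of eigenfunctions with eigenvalues tending to infinity, and one must carefully justify Green's identity on the full form domain $H$ rather than only on $\mathscr{C}_c^\infty(\Omega)$. Once these are granted, all three characterizations reduce to linear algebra in the eigenbasis.
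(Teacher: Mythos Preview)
The paper does not give a proof of this theorem; it is stated as a classical result and attributed to Rayleigh and Ritz via the historical references \cite{Rayleigh2011}, \cite{Ritz1909}, \cite{leissa}, then used as a black box in the proof of Weyl's law. So there is no proof in the paper to compare against.

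Your sketch is the standard one and is essentially correct. Two remarks. First, the identity you quote as Green's identity should read $\int_\Omega |\nabla f|^2 = \int_\Omega \overline{f}\,\Delta f$ (or its real part) for eigenfunctions, and more to the point, the step $\int_\Omega |\nabla f|^2 = \sum_j \lambda_j |c_j|^2$ for general $f \in H$ is not an immediate consequence of the $L^2$ expansion $f = \sum_j c_j f_j$; you need that the eigenfunctions also form an orthogonal basis of $H$ with respect to the form $\int_\Omega \nabla f \cdot \overline{\nabla g}$ (equivalently, that the partial sums converge in $H$). This is exactly the content of the Friedrichs extension machinery you allude to, so be explicit about it. Second, in your dimension-count for \eqref{eq:variational_principle3}, the intersection $L \cap \mathrm{span}(f_1,\ldots,f_{k-1})^\perp$ is taken in $L^2$, not in $H$; since $L \subset H \subset L^2$ this is fine, but make clear which orthogonal complement you mean. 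With those points tightened, your argument is complete and is the textbook route.
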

\begin{remark}
    The expression 
    \begin{equation*}
        \frac{\int_\Omega |\nabla f|^2}{\int_\Omega |f|^2}
    \end{equation*}
    may be called the \textit{Rayleigh quotient,} \textit{Ritz quotient,} or \textit{Rayleigh-Ritz quotient} of $f$. 
\end{remark}

As mentioned earlier, Weyl developed his law as part of his broader research in mathematical physics and differential geometry, particularly in the context of elliptic partial differential equations and the spectrum of differential operators. While it is difficult to pinpoint exactly how he discovered or learned about the variational principles, we expect that he was familiar with Rayleigh's works on the principles of sound \cite{Rayleigh2011} and may also have accessed Ritz's methods published in 1909 \cite{Ritz1909}.

We can now show that Weyl's law holds for an arbitrary domain $\Omega$ by approximating it with rectangles and using the Dirichlet and Neumann eigenvalues on the approximating rectangles together with the variational principles, following Weyl's original proof \cite{weyl1912asymptotische}. This is known as Dirichlet-Neumann bracketing. We will show the proof in detail for planar domains, and afterwards we will explain briefly how to obtain the analogous result for domains in $\R^n$.

\begin{theorem}[Weyl's law] \label{theorem:Weyl}
    Let $\Omega \subset \R^2$ be a bounded domain and let $0 < \lambda_1 \leq \lambda_2 \leq \dots$ be the Dirichlet eigenvalues of the Laplace operator on $\Omega$. If the boundary $\partial\Omega$ is piecewise smooth, then
    \begin{equation*}
        \lim_{\lambda \to \infty} \frac{N(\lambda)}{\lambda} = \frac{|\Omega|}{4\pi}.
    \end{equation*}
\end{theorem}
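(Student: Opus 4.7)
The plan is to execute Weyl's own Dirichlet--Neumann bracketing argument: compare the counting function $N(\lambda)$ on $\Omega$ with counting functions on two rectangular covers approximating $\Omega$ from inside and outside, and invoke Proposition \ref{prop:Weyl_rectangles} on each piece. Concretely, fix a mesh $h>0$ and overlay the lattice $h\Z^2$ on $\R^2$; let $\Omega_{\mathrm{in}}(h)$ be the union of the (closed) grid squares contained in $\Omega$, and $\Omega_{\mathrm{out}}(h)$ the union of those whose closure meets $\overline{\Omega}$, so $\Omega_{\mathrm{in}}(h)\subset \Omega\subset \Omega_{\mathrm{out}}(h)$. Piecewise smoothness of $\partial\Omega$ furnishes finite one-dimensional length, hence the symmetric difference $\Omega_{\mathrm{out}}(h)\setminus \Omega_{\mathrm{in}}(h)$ lies in an $O(h)$-neighbourhood of $\partial\Omega$ and has area $O(h)$; in particular $|\Omega_{\mathrm{in}}(h)|,|\Omega_{\mathrm{out}}(h)|\to |\Omega|$ as $h\to 0$.

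Next I would exploit the min--max characterisation \eqref{eq:variational_principle3}. Extension by zero produces the chain of embeddings
\begin{equation*}
\bigoplus_{Q_i \subset \Omega_{\mathrm{in}}(h)} H_0^1(Q_i) \;\hookrightarrow\; H_0^1(\Omega) \;\hookrightarrow\; \bigoplus_{Q_j \subset \Omega_{\mathrm{out}}(h)} H^1(Q_j),
\end{equation*}
where in the leftmost space continuity across interior grid edges has been dropped, and on all three spaces the Dirichlet form $\int|\nabla f|^2$ and $L^2$ norm agree for the embedded functions because they can be computed square by square. Applying \eqref{eq:variational_principle3} and ordering the direct-sum spectra, the $k$th eigenvalue of $\Omega$ is squeezed between the corresponding eigenvalues of the two decoupled problems; equivalently,
\begin{equation*}
\sum_{Q_i\subset \Omega_{\mathrm{in}}(h)} N^D(Q_i,\lambda) \;\le\; N(\lambda) \;\le\; \sum_{Q_j\subset \Omega_{\mathrm{out}}(h)} N^N(Q_j,\lambda),
\end{equation*}
where $N^D, N^N$ denote the Dirichlet and Neumann counting functions on a square.

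By Proposition \ref{prop:Weyl_rectangles}, each square $Q$ of side $h$ satisfies $N^D(Q,\lambda)\sim N^N(Q,\lambda)\sim h^2\lambda/(4\pi)$ as $\lambda\to\infty$. For fixed $h$ both sums above are finite, so asymptotic equivalence survives summation; multiplying the number of squares by $h^2$ gives the corresponding area, so dividing by $\lambda$ and sending $\lambda\to\infty$ first yields
\begin{equation*}
\frac{|\Omega_{\mathrm{in}}(h)|}{4\pi} \;\le\; \liminf_{\lambda\to\infty}\frac{N(\lambda)}{\lambda} \;\le\; \limsup_{\lambda\to\infty}\frac{N(\lambda)}{\lambda} \;\le\; \frac{|\Omega_{\mathrm{out}}(h)|}{4\pi}.
\end{equation*}
Letting $h\to 0$ pinches both sides to $|\Omega|/(4\pi)$.

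The step I expect to be the main obstacle is the upper bound: the embedding $H_0^1(\Omega)\hookrightarrow \bigoplus_j H^1(Q_j)$ is harmless-looking but is precisely where the Dirichlet boundary condition is quietly being used, since one extends $f\in H_0^1(\Omega)$ by zero across $\partial\Omega$ and then restricts to each surrounding square while preserving the Rayleigh quotient; one must also verify that the double limit is legitimately taken with $\lambda\to \infty$ \emph{before} $h\to 0$. The piecewise-smoothness hypothesis on $\partial\Omega$, by contrast, plays only the routine role of guaranteeing $|\Omega_{\mathrm{out}}(h)|-|\Omega_{\mathrm{in}}(h)|\to 0$, and the argument is dimension-agnostic up to the constant.
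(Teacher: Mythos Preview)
Your proposal is correct and follows essentially the same Dirichlet--Neumann bracketing strategy as the paper. The only cosmetic difference is that the paper packages the comparison as a chain of five inequalities (its Lemma~\ref{lemma:weyl_inequalities}) passing through $\mu_k(\Omega_O)$ and $\lambda_k(\Omega_O)$, whereas your direct embeddings $\bigoplus H_0^1(Q_i)\hookrightarrow H_0^1(\Omega)\hookrightarrow\bigoplus H^1(Q_j)$ collapse that chain to the two endpoint inequalities in one stroke; both arguments then conclude identically by applying Proposition~\ref{prop:Weyl_rectangles} on each piece and sending the mesh to zero.
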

\begin{proof}
    \begin{figure}
    \centering
    \begin{tikzpicture} % See https://tex.stackexchange.com/questions/33607/easy-curves-in-tikz
        \draw plot [smooth cycle] coordinates {(4,2)  (3,5) (5,7) (8,7.7)  (9,6)  (7,4)   (8,2)};
        \draw[-] (4,1.5) -- (8,1.5); % horizontal lines
        \draw[-] (3.5,2) -- (8.5,2);
        \draw[-] (3,2.5) -- (8.5,2.5);
        \draw[-] (3,3) -- (8,3);
        \draw[-] (3,3.5) -- (7.5,3.5);
        \draw[-] (3,4) -- (7.5,4);
        \draw[-] (2.5,4.5) -- (8,4.5);
        \draw[-] (2.5,5) -- (9,5);
        \draw[-] (3,5.5) -- (9,5.5);
        \draw[-] (3,6) -- (9.5,6);
        \draw[-] (3.5,6.5) -- (9.5,6.5);
        \draw[-] (4,7) -- (9,7);
        \draw[-] (5,7.5) -- (9,7.5);
        \draw[-] (6,8) -- (8.5,8);

        \draw[-] (2.5,4.5) -- (2.5,5); % vertical lines
        \draw[-] (3,2.5) -- (3,6);
        \draw[-] (3.5,2) -- (3.5,6.5);
        \draw[-] (4,1.5) -- (4,7);
        \draw[-] (4.5,1.5) -- (4.5,7);
        \draw[-] (5,1.5) -- (5,7.5);
        \draw[-] (5.5,1.5) -- (5.5,7.5);
        \draw[-] (6,1.5) -- (6,8);
        \draw[-] (6.5,1.5) -- (6.5,8);
        \draw[-] (7,1.5) -- (7,8);
        \draw[-] (7.5,1.5) -- (7.5,8);
        \draw[-] (8,1.5) -- (8,3); \draw[-] (8,4.5) -- (8,8);
        \draw[-] (8.5,2) -- (8.5,2.5); \draw[-] (8.5,5) -- (8.5,8);
        \draw[-] (9,5) -- (9,7.5);
        \draw[-] (9.5,6) -- (9.5,6.5);

        \draw[-] (3,4.5) -- (5.5,7); % diagonal lines up right
        \draw[-] (3.5,4.5) -- (6,7);
        \draw[-] (3.5,4) -- (7,7.5);
        \draw[-] (3.5,3.5) -- (7.5,7.5);
        \draw[-] (3.5,3) -- (8,7.5);
        \draw[-] (4,3) -- (8,7);
        \draw[-] (4,2.5) -- (8.5,7);
        \draw[-] (4,2) -- (8.5,6.5);
        \draw[-] (4.5,2) -- (7,4.5); \draw[-] (7.5,5) -- (8.5,6);
        \draw[-] (5,2) -- (7,4);
        \draw[-] (5.5,2) -- (7,3.5);
        \draw[-] (6,2) -- (7,3);
        \draw[-] (6.5,2) -- (7.5,3);
        \draw[-] (7,2) -- (7.5,2.5);

        \draw[-] (4,2.5) -- (4.5,2); % diagonal lines down left
        \draw[-] (3.5,3.5) -- (5,2);
        \draw[-] (3.5,4) -- (5.5,2);
        \draw[-] (3,5) -- (6,2);
        \draw[-] (3.5,5) -- (6.5,2);
        \draw[-] (3.5,5.5) -- (7,2);
        \draw[-] (4,5.5) -- (7.5,2);
        \draw[-] (4,6) -- (7.5,2.5);
        \draw[-] (4.5,6) -- (7,3.5);
        \draw[-] (4.5,6.5) -- (7,4);
        \draw[-] (5,6.5) -- (7,4.5);
        \draw[-] (5,7) -- (7,5);
        \draw[-] (5.5,7) -- (7.5,5);
        \draw[-] (6,7) -- (8,5);
        \draw[-] (6.5,7) -- (8,5.5);
        \draw[-] (6.5,7.5) -- (8.5,5.5);
        \draw[-] (7,7.5) -- (8.5,6);
        \draw[-] (7.5,7.5) -- (8.5,6.5);
    \end{tikzpicture}
    \caption{The domain $\Omega$ has an inner and outer covering consisting of finitely many rectangles. The rectangles contained in the interior are marked with a cross.}
    \label{fig:inner_outer_covering_rectangles}
\end{figure}
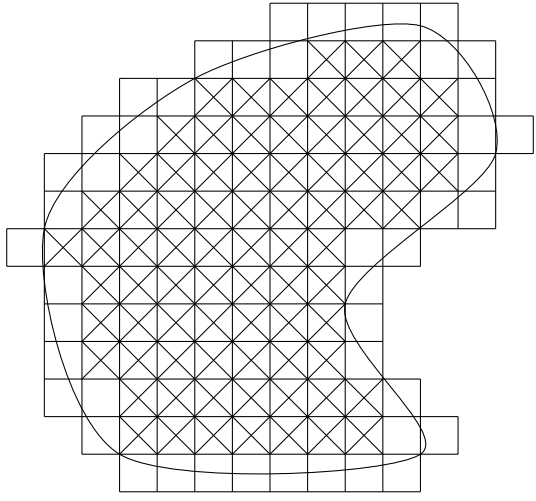

    Let $\epsilon > 0$.  We choose an inner covering $\Omega_I = \bigcup_{i=1}^N \Omega_{I,i}$ and an outer covering $\Omega_O = \bigcup_{j=1}^M \Omega_{O,j}$ each consisting of finitely many disjoint rectangles; they may only overlap on their boundaries. We choose these coverings such that the area of the region between the outer and inner coverings has area less than $\epsilon$, that is $|\Omega_0 \backslash \Omega_1| < \epsilon$.  This is shown in Figure \ref{fig:inner_outer_covering_rectangles}.
        
    Let $\lambda_k(\Omega_\cdot)$ and $\mu_k(\Omega_\cdot)$ denote the $k$th Dirichlet and Neumann eigenvalues, respectively, on $\Omega_\cdot$. We take the unions 
    \begin{equation*}
        \bigcup_{i,k} \{\lambda_k(\Omega_{I,i})\}, \,\,\bigcup_{j,k} \{\lambda_k(\Omega_{O,j})\}, \,\,\bigcup_{i,k} \{\mu_k(\Omega_{I,i})\}, \,\, \bigcup_{j,k} \{\lambda_k(\Omega_{O,j})\} 
    \end{equation*}
    of the respective eigenvalues over the rectangles and reorder them into increasing sequences. Denote the results by $\{\Tilde{\lambda}_k(\Omega_I)\}$, $\{\Tilde{\lambda}_k(\Omega_O)\}$, $\{\Tilde{\mu}_k(\Omega_I)\}$, and $\{\Tilde{\mu}_k(\Omega_O)\}$, respectively.  We now require the following lemma.
    \begin{lemma} \label{lemma:weyl_inequalities}
        With the notations above, we have for every $k$
        \begin{equation*}
            \Tilde{\mu}_k(\Omega_O) \stackrel{(1)}{\leq} \mu_k(\Omega_O) \stackrel{(2)}{\leq} \lambda_k(\Omega_O) \stackrel{(3)}{\leq} \lambda_k(\Omega) \stackrel{(4)}{\leq} \lambda_k(\Omega_I) \stackrel{(5)}{\leq} \Tilde{\lambda}_k(\Omega_I).
        \end{equation*}
    \end{lemma}
    \begin{proof}[Proof of lemma \ref{lemma:weyl_inequalities}] There are five inequalities to prove, (1) through (5). We start with (3). Since $\Omega \subset \Omega_O$, we can extend functions $f \in H_0^1(\Omega)$ to equal 0 on $\Omega_O \backslash \Omega$. Then we have $f \in H_0^1(\Omega_O)$, and so $H_0^1(\Omega) \subset H_0^1(\Omega_O)$. Now (3) follows immediately from the variational principle \eqref{eq:variational_principle3} because taking the infimum over a larger set yields a smaller result. Inequality (4) follows in the same way since $\Omega_I \subset \Omega$. In fact, the same argument works for (5) because for each rectangle we can extend its $H_0^1$ functions to be zero outside it, and then it will be included in $\Omega_I$. Similarly, (2) follows from the fact that for Neumann boundary conditions \eqref{eq:variational_principle3} minimizes over $H^1$, which is larger than what we minimize over for Dirichlet boundary conditions, namely $H_0^1$. 

    Thus, it remains to show (1). Recall that for Neumann boundary conditions on a rectangle, 0 is an eigenvalue with corresponding eigenfunctions being non-zero constant functions. Thus, if we let $R_1,\dots,R_M$ denote the rectangles of $\Omega_O$, then each $R_i$ has 0 as an eigenvalue. This means that $\Tilde{\mu}_1(\Omega_O) = \dots = \Tilde{\mu}_M(\Omega_O) = 0$. Since $\mu_i(\Omega_O) \geq 0$ for every $i$, it follows that $\Tilde{\mu}_i(\Omega_O) \leq \mu_i(\Omega_O)$ for $i = 1,\dots, M$. The next eigenvalue, $\Tilde{\mu}_{M+1}(\Omega_O)$, will then be the smallest positive eigenvalue coming from all the rectangles $R_1,\dots,R_M$. After re-labeling, we may assume that $\Tilde{\mu}_{M+1}(\Omega_O)$ comes from rectangle $R_1$. 

    The next step is to use the variational principle \eqref{eq:variational_principle2}. We have
    \begin{equation} \label{eq:variational2_mu_M+1}
        \mu_{M+1}(\Omega_O) = \sup_{\substack{L \subset H^1(\Omega_O) \\ \text{ dim } L = M}} \inf_{\substack{f \in L^{\perp} \\ f \neq 0}} \frac{\int_{\Omega_O} |\nabla f|^2}{\int_{\Omega_O} |f|^2}.
    \end{equation}
    Now let $\Tilde{L}$ be the $M$-dimensional subspace of $H^1(\Omega_O)$ spanned by the characteristic functions on the rectangles, so that a basis of $\Tilde{L}$ is given by $\{f_1,\dots,f_M\}$ where 
    \begin{equation*}
        f_i(x) = 
        \begin{cases}
            1, \,\,x \in R_i, \\
            0, \,\,x \notin R_i. 
        \end{cases}
    \end{equation*}
    To show that $\Tilde{\mu}_{M+1}(\Omega_O) \leq \mu_{M+1}(\Omega_O)$, by \eqref{eq:variational2_mu_M+1} it's enough to show that
    \begin{equation*}
        \Tilde{\mu}_{M+1}(\Omega_O) \leq \inf_{\substack{f \in \Tilde{L}^{\perp} \\ f \neq 0}} \frac{\int_{\Omega_O} |\nabla f|^2}{\int_{\Omega_O} |f|^2}. 
    \end{equation*}
    Note that the eigenfunction $u$ corresponding to $\Tilde{\mu}_{M+1}(\Omega_O)$ is in $\Tilde{L}^{\perp}$. Indeed, this is immediate for $R_2,\dots,R_M$ since we extend $u$ to be 0 on those. For $R_1$, recall the variational principle \eqref{eq:variational_principle1} which says that $u$ is orthogonal to the eigenfunctions corresponding to $\Tilde{\mu}_1,\dots,\Tilde{\mu}_M$. These eigenfunctions are constant functions, so we obtain $\int_{R_1} u = 0$ and therefore $u$ is also orthogonal to the characteristic functions on $R_1$, hence $u \in \Tilde{L}^\perp$. 

    In fact, for any $f \in \Tilde{L}^\perp$ we have $\int_{R_i} f = 0$ for every $i$ by definition of $\Tilde{L}$. In particular, $f$ is orthogonal to the eigenfunctions corresponding to $\Tilde{\mu}_1(\Omega_O), \dots, \Tilde{\mu}_M(\Omega_O)$. This gives, by \eqref{eq:variational_principle1}, that for every $i$
    \begin{equation*}
        \Tilde{\mu}^{(i)} \leq \frac{\int_{R_i} |\nabla f|^2}{\int_{R_i} |f|^2},
    \end{equation*}
    where $\Tilde{\mu}^{(i)}$ is the first positive Neumann eigenvalue on $R_i$. Then
    \begin{equation*}
        \int_{R_i} |\nabla f|^2 \geq \Tilde{\mu}^{(i)} \int_{R_i} |f|^2 \geq \Tilde{\mu}_{M+1}(\Omega_O) \int_{R_i} |f|^2,
    \end{equation*}
    and so
    \begin{equation*}
        \frac{\int_{\Omega_O} |\nabla f|^2}{\int_{\Omega_O} |f|^2} = \frac{\sum_{i=1}^M \int_{R_i} |\nabla f|^2}{\sum_{i=1}^M \int_{R_i} |f|^2} \geq \frac{\Tilde{\mu}_{M+1}(\Omega_O) \sum_{i=1}^M \int_{R_i} |f|^2}{\sum_{i=1}^M \int_{R_i} |f|^2} = \Tilde{\mu}_{M+1}(\Omega_O).
    \end{equation*}
    Since this holds for all $f \in \Tilde{L}^\perp$, it also holds for the infimum:
    \begin{equation*}
        \Tilde{\mu}_{M+1}(\Omega_O) \leq \inf_{\substack{f \in \Tilde{L}^\perp \\ f \neq 0}} \frac{\int_{\Omega_O} |\nabla f|^2}{\int_{\Omega_O} |f|^2}.
    \end{equation*}
    This completes the proof that $\Tilde{\mu}_{M+1}(\Omega_O) \leq \mu_{M+1}(\Omega_O)$. Inequality (1) now follows by induction on $k$.
    \end{proof}
    Continuing with the proof of the theorem, let $N_N(\lambda, \Omega_\cdot)$ and $N_D(\lambda, \Omega_\cdot)$ denote the number of Neumann and Dirichlet eigenvalues less than or equal to $\lambda$ on $\Omega_\cdot$, respectively. By Lemma \ref{lemma:weyl_inequalities}, we have
   % \begin{equation*}
    \beq &    \frac{\sum_{i=1}^N N_D(\lambda, \Omega_{I,i})}{\lambda} \leq \frac{N_D(\lambda, \Omega_I)}{\lambda} \leq \frac{N_D(\lambda, \Omega)}{\lambda} \nn \\ 
    &\leq \frac{N_D(\lambda, \Omega_O)}{\lambda} \leq \frac{N_N(\lambda, \Omega_O)}{\lambda} \leq \frac{\sum_{j=1}^M N_N(\lambda, \Omega_{O,j})}{\lambda}
    \nn \eeq 
    because having larger eigenvalues means that there are fewer eigenvalues up to some fixed number. By Weyl's law on rectangles (see Proposition \ref{prop:Weyl_rectangles}), 
    \begin{equation*}
         \lim_{\lambda \to \infty} \frac{\sum_{i=1}^N N_D(\lambda, \Omega_{I,i})}{\lambda} = \frac{\sum_{i = 1}^N|\Omega_{I,i}|}{4\pi} = \frac{|\Omega_I|}{4\pi},
    \end{equation*}
    and similarly
    \begin{equation*}
         \lim_{\lambda \to \infty} \frac{\sum_{j=1}^M N_N(\lambda, \Omega_{O,j})}{\lambda} = \frac{|\Omega_O|}{4\pi}.
    \end{equation*}
    Thus, we obtain
    \begin{equation*}
        \frac{|\Omega_I|}{4\pi} \leq \liminf_{\lambda \to \infty} \frac{N_D(\lambda, \Omega)}{\lambda} \leq \limsup_{\lambda \to \infty} \frac{N_D(\lambda, \Omega)}{\lambda} \leq \frac{|\Omega_O|}{4\pi}.
    \end{equation*}
    Letting $\epsilon \to 0$ gives
    \begin{equation*}
        \lim_{\lambda \to \infty} \frac{N_D(\lambda, \Omega)}{\lambda} = \frac{|\Omega|}{4\pi},
    \end{equation*}
    as desired. 
    \end{proof}

If we had an $n$-dimensional domain $\Omega \subset \R^n$, we could analogously approximate it by an inner and outer covering of $n$-dimensional rectangles. Since both Weyl's law and the variational principles hold for such rectangles, the proof would look exactly the same, and we would obtain
\begin{equation*}
    \lim_{\lambda \to \infty} \frac{N(\lambda)}{\lambda^{n/2}} = \frac{\omega_n|\Omega|}{(2\pi)^n}.
\end{equation*}

Weyl's law formulated as in Theorem \ref{theorem:Weyl} also holds for Neumann boundary conditions. However, the proof is more difficult than the Dirichlet case, partly because there is no obvious analogue of Dirichlet-Neumann bracketing. In particular, two domains $\Omega_1$ and $\Omega_2$ satisfying $\Omega_1 \subset \Omega_2$ need not satisfy $\lambda_k(\Omega_2) \leq \lambda_k(\Omega_1)$ in the Neumann case, as they do in the Dirichlet case. In light of Theorem \ref{Theorem:Variational_principles}, while functions $f \in H_0^1(\Omega_1)$ can easily be extended to being in $H_0^1(\Omega_2)$ by defining them to be zero in $\Omega_2 \backslash \Omega_1$, functions $f \in H^1(\Omega_1)$ cannot always be extended to $H^1(\Omega_2)$.

Interestingly, while we only considered domains with piecewise smooth boundary, Weyl's law can be proven to hold for any bounded domain in $\R^n$ with Dirichlet boundary conditions. This is, however, not true in the Neumann case, where the domain in addition needs to satisfy the so called \textit{extension property}. See \cite[Ch. 3]{frank2022schrodinger} for details. See also \cite{jakvsic1992eigenvalue} for examples of domains which do not satisfy Weyl's law in the Neumann case.

As mentioned in \S\ref{sec:introduction}, Weyl's law shows that the spectrum of an $n$-dimensional domain determines its dimension $n$ and its $n$-dimensional volume. A natural question then is whether the spectrum determines \textit{all} of the geometry of a domain, or more generally a Riemannian manifold. As one might guess, this is a hard problem because for for most Riemannian manifolds one cannot compute the spectrum explicitly. There is, however, one particular class of examples for which one can explicitly compute the spectrum in a certain sense, namely \textit{flat tori}. 

\section{Listening to flat tori} \label{sec:quadratic_forms}
An $n$-dimensional flat torus is a smooth compact Riemannian manifold obtained by taking the quotient of Euclidean space $\R^n$ by a full-rank lattice.  For inclusivity, we recall that  $\GL_n(\R)$ is the group of all invertible $n\times n$ matrices with entries in $\R$ with group operation given by matrix multiplication. The subgroup $\GL_n(\Z) \subset \GL_n(\R)$ consists of those matrices with entries in $\Z$ whose inverses also have entries in $\Z$.  A full-rank lattice in $\R^n$ is a discrete additive subgroup of the additive group $\R^n$, that necessarily is a set of the form $\Gamma = A\Z^n$ for some invertible matrix $A \in \R^{n \times n}$ called a \textit{basis matrix} of $\Gamma$.  We note that there are many basis matrices for the same lattice because for any $B \in \GL_n(\Z)$, the sets $A \Z^n$ and $AB \Z^n$ are identical. The lattice $\Gamma$ defines an associated \textit{flat torus} $\T_\Gamma = \R^n / \Gamma$. Such a torus is ``flat'' because its Riemannian metric is induced by the Euclidean metric on $\R^n$. Two flat tori are \textit{isometric} if they are isometric as Riemannian manifolds.

Thanks to the structure of a flat torus as a quotient of Euclidean space, the Laplace eigenvalue problem for a flat torus can be studied in Euclidean space by searching for all twice weakly locally differentiable functions  $f : \R^n \to \C$ and $\lambda \in \C$ such that for all $x \in \R^n$ and for all $\gamma \in \Gamma$,
\begin{equation} \label{eq:Eigenvalue_flat_torus}
        \begin{cases}
            \Delta f(x) = \lambda f(x), \\
            f(x + \gamma) = f(x), \\ 
            \nabla f(x + \gamma) = \nabla f(x). \\
            
        \end{cases}
\end{equation}

The idea of two lattices being the same shape is captured by the notion of congruence. Let $O_n(\R)$ be the group of all invertible $n\times n$ matrices $A$ with real entries such that $A^{-1} = A^T$. We say that two lattices $A_1\Z^n$ and $A_2\Z^n$ are \textit{congruent} if $A_1 = A_2B$ for some $B \in O_n(\R)$. By \cite[p. 5]{berger1971spectre}, two flat tori are isometric if and only if their associated lattices are congruent. 

To find the spectrum of a flat torus, we need the notion of dual lattices. The \textit{dual lattice} $\Gamma^*$ of a lattice $\Gamma$ is 
\begin{equation*}
    \Gamma^* = \{\gamma \in \Gamma : \gamma \cdot \delta \in \Z \text{ for all } \delta \in \Gamma\}.
\end{equation*}
\begin{theorem}[\cite{nilsson2022isospectral}, Thm. 2.8]
    The spectrum of a flat torus $\R^n / \Gamma$ is the multi-set (meaning counting multiplicity) 
    \begin{equation*}
        \{4\pi^2||\gamma||^2 : \gamma \in \Gamma^*\}.
    \end{equation*}
    The multiplicity of $4\pi^2||\gamma||^2$ is the number of different $\delta \in \Gamma^*$ such that $||\delta|| = ||\gamma||$.
\end{theorem}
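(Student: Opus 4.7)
The plan is a direct Fourier expansion on $\T_\Gamma$. First I would observe that for any $\xi \in \R^n$ the plane wave $e_\xi(x) := e^{2\pi i \xi \cdot x}$ is smooth on $\R^n$ and, with the sign convention of \eqref{eq:Laplace_R^n}, satisfies $\Delta e_\xi = 4\pi^2 \|\xi\|^2\, e_\xi$ by a direct differentiation. The periodicity conditions in \eqref{eq:Eigenvalue_flat_torus} reduce to the single requirement $e^{2\pi i \xi \cdot \gamma} = 1$ for all $\gamma \in \Gamma$, which is exactly the condition $\xi \in \Gamma^*$. Hence every $\xi \in \Gamma^*$ yields an eigenfunction of $\Delta$ on $\T_\Gamma$ with eigenvalue $4\pi^2 \|\xi\|^2$, proving that the stated multi-set is contained in the spectrum.

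For the reverse inclusion, the key input is that $\{e_\xi\}_{\xi \in \Gamma^*}$ is a complete orthogonal system in $L^2(\T_\Gamma)$. Orthogonality is a direct integration on a fundamental domain, and completeness is the multidimensional Fourier theorem: it is classical for $\Gamma = \Z^n$, and for a general full-rank lattice $\Gamma = A\Z^n$ it follows by pulling back along the linear diffeomorphism $\T_{\Z^n} \to \T_\Gamma$, $y \mapsto Ay$, which transports $(\Z^n)^* = \Z^n$ bijectively to $\Gamma^* = A^{-T}\Z^n$. Granted completeness, any $L^2$-eigenfunction $f$ with eigenvalue $\lambda$ expands as $f = \sum_{\xi \in \Gamma^*} \hat f(\xi)\, e_\xi$; applying $\Delta$ term by term and matching Fourier coefficients forces $\hat f(\xi) = 0$ whenever $4\pi^2 \|\xi\|^2 \neq \lambda$. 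Thus every eigenvalue has the form $4\pi^2\|\gamma\|^2$ for some $\gamma \in \Gamma^*$, and the corresponding eigenspace is spanned by those $e_\xi$ with $\|\xi\| = \|\gamma\|$. Since the $e_\xi$ are orthogonal, in particular linearly independent, the multiplicity is exactly $\#\{\delta \in \Gamma^* : \|\delta\| = \|\gamma\|\}$, as claimed.

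The only substantive obstacle is the completeness of the exponential basis on $\T_\Gamma$; once that is accepted, the rest is lattice bookkeeping and the transformation rule $(A\Z^n)^* = A^{-T}\Z^n$. A clean self-contained alternative is to invoke the spectral theorem for $\Delta$ as a non-negative self-adjoint operator with compact resolvent on $L^2(\T_\Gamma)$ and to observe that the $e_\xi$, $\xi \in \Gamma^*$, are precisely the unitary characters of the compact abelian group $\T_\Gamma$. These characters commute with $\Delta$, separate points, and form an algebra closed under conjugation, so by Stone--Weierstrass (or Peter--Weyl) they are dense in $\mathscr C(\T_\Gamma)$, hence in $L^2(\T_\Gamma)$, yielding completeness directly.
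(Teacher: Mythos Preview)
The paper does not supply its own proof of this theorem; it is quoted verbatim from \cite{nilsson2022isospectral} and immediately used. Your argument is the standard one and is correct: the plane waves $e_\xi$ with $\xi\in\Gamma^*$ are eigenfunctions with eigenvalue $4\pi^2\|\xi\|^2$, and completeness of $\{e_\xi\}_{\xi\in\Gamma^*}$ in $L^2(\T_\Gamma)$ (either via pullback from $\T_{\Z^n}$ or via Stone--Weierstrass on the characters) forces every eigenfunction to be a finite combination of those $e_\xi$ sharing a common norm, giving both the spectrum and the multiplicities.

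One small remark: the paper's displayed definition of $\Gamma^*$ reads $\{\gamma\in\Gamma:\gamma\cdot\delta\in\Z\text{ for all }\delta\in\Gamma\}$, which is almost certainly a typo for $\gamma\in\R^n$; your proof implicitly uses the correct definition, and indeed your identity $(A\Z^n)^*=A^{-T}\Z^n$ would fail under the literal one.
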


In turns out that quadratic forms have a strong connection to flat tori. To understand this, we need some more definitions. Recall that any quadratic form $q$ in $n$ variables has an \textit{associated matrix}, that is, a symmetric $n \times n$ matrix $Q$ such that $q(x) = x^TQx$ for all $x \in \R^n$. Moreover, $q$ is \textit{positive definite} if $q(x) \geq 0$ for all $x \in \R^n$ with equality if and only if $x = 0$. %It is \textit{positive semi-definite} if $q(x) \geq 0$ for all $x \in \R^n$ and there exists a non-zero $y \in \R^n$ such that $q(y) = 0$. 
It is \textit{rational} if all entries in $Q$ are rational, and it is \textit{even} if all entries in $Q$ are integers, and the diagonal elements are even.  

\begin{proposition}[\cite{serre2000matrices}, Thm. 11.2]
    If $q(x) = x^TQx$ is a positive definite quadratic form in $n$ variables, then there is an invertible $n \times n$ matrix $A$ such that $Q = A^TA$. This is called the \textup{Cholesky factorization} of $Q$. The matrix $A$ is unique up to multiplication by matrices in $O_n(\R)$.
\end{proposition}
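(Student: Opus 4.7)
The plan is to establish existence via the real spectral theorem and then deduce uniqueness from a short matrix computation. For existence, I would start from the fact that $Q$ is symmetric, which gives an orthogonal diagonalization $Q = U D U^T$ with $U \in O_n(\R)$ and $D = \mathrm{diag}(\lambda_1, \ldots, \lambda_n)$. The positive definiteness hypothesis is crucial here: testing the quadratic form against a unit eigenvector $v_i$ of $Q$ forces $\lambda_i = v_i^T Q v_i > 0$, so every eigenvalue is strictly positive and hence has a real positive square root.

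Setting $D^{1/2} := \mathrm{diag}(\sqrt{\lambda_1}, \ldots, \sqrt{\lambda_n})$ and $A := D^{1/2} U^T$, I would verify
\[ A^T A = U D^{1/2} D^{1/2} U^T = U D U^T = Q, \]
and note that $A$ is invertible because both factors are. This gives the required factorization; the statement only asks for some invertible $A$, not a triangular one, though one could alternatively proceed by induction on $n$ using Schur complements to recover the classical lower-triangular Cholesky form.

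For uniqueness up to $O_n(\R)$, I would take two invertible matrices $A_1, A_2$ satisfying $A_1^T A_1 = A_2^T A_2 = Q$, set $O := A_1 A_2^{-1}$, and compute
\[ O^T O = (A_2^{-1})^T A_1^T A_1 A_2^{-1} = (A_2^{-1})^T Q A_2^{-1} = (A_2^{-1})^T A_2^T A_2 A_2^{-1} = I, \]
so $O \in O_n(\R)$ and $A_1 = O A_2$. The main subtlety, such as it is, lies in the existence step: positive definiteness of $q$ must be invoked precisely to rule out zero or negative eigenvalues of $Q$, so that the real square roots $\sqrt{\lambda_i}$ are available and $A$ is genuinely invertible. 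Once that is in hand, the remainder is routine algebra with the identity $A^T A = Q$.
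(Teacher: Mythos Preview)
Your argument is correct. The spectral-theorem construction $A = D^{1/2}U^T$ gives a valid factorization, and the uniqueness computation showing $A_1 A_2^{-1} \in O_n(\R)$ is clean.

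There is nothing to compare against: the paper does not supply its own proof of this proposition. It is stated with a citation to Serre's textbook (Theorem 11.2) and used as a black box in the discussion of underlying lattices of quadratic forms. Your proof is the standard one and would serve perfectly well if the authors had chosen to include an argument.
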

If $q$ is a positive definite quadratic form in $n$ variables whose associated matrix $Q$ has Cholesky factorization $Q = A^TA$, then $A\Z^n$ is called the \textit{underlying lattice} of $Q$ and $q$. Since $A\Z^n$ may not equal $AO\Z^n$ for $O \in O_n(\R)$, the underlying lattice of a quadratic form is not unique. However, they are all congruent, and therefore their associated flat tori are all isometric.

Two quadratic forms $q(x) = x^TQx$ and $p(x) = x^TPx$ are called \textit{integrally equivalent} if $B^TQB = P$ for some $B \in \GL_n(\Z)$. This defines an equivalence relation, and two integrally equivalent quadratic forms have identical \textit{representation numbers}. That is, $R(q,t) = R(p,t)$ for all $t \geq 0$, where 
\begin{equation*}
    R(q,t) = \#\{x \in \Z^n : q(x) = t\}.
\end{equation*}
Conversely, while the two quadratic forms with associated matrices $A^TA$ and $(AB)^T$ $AB$ may not be equal for $B \in \GL_n(\Z)$, they are integrally equivalent and therefore have identical representation numbers. This makes it reasonable to associate a flat torus $\R^n / \Gamma$, $\Gamma = A\Z^n$, with the equivalence class of integrally equivalent quadratic forms which contains the quadratic form with associated matrix $A^TA$. It follows, then, that two flat tori are isometric if and only if their associated equivalence classes of quadratic forms coincide. Moreover, we have the following result.

\begin{theorem}[\cite{nilsson2022isospectral}, Cor. 2.15]
    Two flat tori are isospectral if and only if their representation numbers associated as above are identical.
\end{theorem}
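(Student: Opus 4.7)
The plan is to translate the Laplace spectrum into lattice-length counts, identify these with representation numbers of an explicit quadratic form attached to the torus, and verify that the resulting condition respects integral equivalence. First, I would apply the preceding spectrum theorem: the spectrum of $\R^n/\Gamma$ as a multi-set is $\{4\pi^2\|\gamma\|^2 : \gamma \in \Gamma^*\}$, so the two tori $\R^n/\Gamma_1$ and $\R^n/\Gamma_2$ are isospectral if and only if, for every $t \geq 0$,
\[
\#\{\gamma \in \Gamma_1^* : \|\gamma\|^2 = t\} = \#\{\gamma \in \Gamma_2^* : \|\gamma\|^2 = t\}.
\]

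Second, I would parametrize the dual lattice via $\Z^n$. Writing $\Gamma = A\Z^n$, one checks from the definition that $\Gamma^* = A^{-T}\Z^n$; hence every $\gamma \in \Gamma^*$ is uniquely $A^{-T}x$ with $x \in \Z^n$, and $\|\gamma\|^2 = x^T(A^TA)^{-1}x$. The count above is therefore the representation number $R(q^{\sharp}, t)$ of the positive-definite quadratic form $q^{\sharp}$ whose associated matrix is $(A^TA)^{-1}$, and isospectrality becomes $R(q^\sharp_1, t) = R(q^\sharp_2, t)$ for every $t$. By the invariance of representation numbers under integral equivalence noted in the excerpt, this condition only depends on the equivalence class of $q^\sharp$.

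Finally, I would show that this class is determined by the class associated to the torus in the excerpt, namely the class containing the form with matrix $A^TA$. If $A_1^TA_1 = B^T A_2^T A_2 B$ with $B \in \GL_n(\Z)$, then inverting gives $(A_1^TA_1)^{-1} = B^{-1}(A_2^TA_2)^{-1}B^{-T}$, and since $B^{-T} = (B^{-1})^T \in \GL_n(\Z)$, integral equivalence of Gram matrices passes to their inverses (and indeed is an involution on equivalence classes). Consequently the representation numbers of $q^\sharp$ are an invariant of the equivalence class associated to $\R^n/\Gamma$, and chaining the equivalences yields both directions of the theorem. The main bookkeeping obstacle is this passage through the dual lattice, which forces the natural quadratic form recording the spectrum to be the inverse Gram matrix rather than $A^TA$ itself; once the inversion-compatibility of integral equivalence is in hand, the rest of the argument is formal.
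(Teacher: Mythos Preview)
Your reduction is correct up to the point where you identify isospectrality of the tori with equality of the representation numbers of the \emph{dual} forms $q^\sharp$ with matrices $(A_i^TA_i)^{-1}$. The gap is in the last paragraph. You prove that integral equivalence of $A_1^TA_1$ and $A_2^TA_2$ is the same as integral equivalence of their inverses, and from this conclude that $R(q^\sharp,\cdot)$ is a well-defined invariant of the integral equivalence class of $q$. That much is true, but it is not what the theorem requires. You need the implication
\[
R(q_1,\cdot)=R(q_2,\cdot)\ \Longleftrightarrow\ R(q_1^\sharp,\cdot)=R(q_2^\sharp,\cdot),
\]
and ``same representation numbers'' is strictly weaker than ``integrally equivalent'' (this is precisely the phenomenon behind Milnor's example). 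Your inversion argument transports integral equivalence classes to integral equivalence classes; it does not transport the coarser relation of having equal representation numbers across duality. So ``chaining the equivalences'' does not close.

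The missing ingredient is Poisson summation (equivalently, Jacobi inversion for theta functions). Packaging the representation numbers into $\theta_\Gamma(t)=\sum_{\gamma\in\Gamma}e^{-\pi t\|\gamma\|^2}$, one has $\theta_\Gamma(t)=\operatorname{covol}(\Gamma)^{-1}\,t^{-n/2}\,\theta_{\Gamma^*}(1/t)$. If $\Gamma_1$ and $\Gamma_2$ have identical length spectra then $\theta_{\Gamma_1}=\theta_{\Gamma_2}$; the asymptotics of $\theta$ force the covolumes to agree, and the transformation law then gives $\theta_{\Gamma_1^*}=\theta_{\Gamma_2^*}$, i.e.\ equal dual length spectra. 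This is the step that genuinely links $R(q,\cdot)$ to $R(q^\sharp,\cdot)$. The paper itself does not supply a proof but cites the result; the cited source establishes it via exactly this theta-function argument.
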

We say that two quadratic forms are \textit{isospectral} if they have identical representation numbers. Thus, two quadratic forms are isospectral if and only if their associated flat tori are isospectral.  If two flat tori are isospectral, are they necessarily isometric?  In other words, if we are listening to flat tori, can we hear their shapes?

This seems like a difficult question because verifying isospectrality directly from the spectrum requires checking that all of the infinitely many eigenvalues are identical.  Checking isometry - or non-isometry - is also not so simple since there are infinitely many isometries of Euclidean space.  One simple way to check non-isometry is to show that one lattice is reducible whereas the other lattice is not.  A lattice $\Gamma \subset \R^n$ is a \textit{direct sum} of two lattices $\Gamma_1, \Gamma_2$ if 
\begin{align*}
    \Gamma_1 + \Gamma_2 &:= \{x + y : x \in \Gamma_1, y \in \Gamma_2\} = \Gamma, \\
    \Gamma_1 \cdot \Gamma_2 &:= \{x \cdot y : x \in \Gamma_1, y \in \Gamma_2\} = \{0\}.
\end{align*}
We write this as $\Gamma = \Gamma_1 \oplus \Gamma_2$. The lattice $\Gamma$ is called \textit{reducible} if it is the direct sum of two lattices of lower dimension. Otherwise it is \textit{irreducible}. It is the case that $\Gamma \neq \{0\}$ is reducible if and only if it is congruent to $\Gamma_1 \times \Gamma_2$, where both $\Gamma_1,\Gamma_2$ have dimension at least 1 \cite[Lemma 2.26]
{nilsson2022isospectral}. 

\begin{proposition}[\cite{nilsson2022isospectral}, Prop. 2.28] \label{prop:reducible_lattice_products}
    Suppose a lattice $\Gamma \in \R^n$ can be decomposed as
    \begin{equation*}
        \Gamma = \Gamma_1 \oplus \dots \oplus \Gamma_k.
    \end{equation*}
    Then, a lattice $\Lambda$ is congruent to $\Gamma$ if and only if $\Lambda$ is a direct sum of $k$ lattices $\Lambda_i$ congruent to $\Gamma_i$. In particular, if $\Gamma$ is reducible and $\Lambda$ is irreducible, then they are not congruent.
\end{proposition}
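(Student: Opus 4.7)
The plan is to prove the biconditional in both directions and then deduce the reducibility statement as an immediate corollary. Throughout, I will exploit the observation that the direct sum decomposition $\Gamma = \Gamma_1 \oplus \cdots \oplus \Gamma_k$ in the sense defined just before the proposition means precisely that the $\Gamma_i$ live in pairwise orthogonal subspaces $V_i := \mathrm{span}_\R(\Gamma_i)$ whose (direct) sum is all of $\R^n$. Indeed, the condition $\Gamma_i \cdot \Gamma_j = \{0\}$ forces $V_i \perp V_j$ by bilinearity of the inner product, and the sum condition $\Gamma_1 + \cdots + \Gamma_k = \Gamma$ forces $V_1 + \cdots + V_k = \mathrm{span}_\R(\Gamma) = \R^n$ since $\Gamma$ is a full-rank lattice.

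For the easier direction (congruence implies decomposition of $\Lambda$), I would take $O \in O_n(\R)$ realizing $\Lambda = O\Gamma$ and simply set $\Lambda_i := O\Gamma_i$. Since $O$ preserves the inner product, the sublattices $\Lambda_i$ remain mutually orthogonal, and their sum is $O(\Gamma_1 + \cdots + \Gamma_k) = O\Gamma = \Lambda$, giving $\Lambda = \Lambda_1 \oplus \cdots \oplus \Lambda_k$. The restriction of $O$ to $V_i$ extends (or restricts, when viewed intrinsically as lattices in $\R^{\dim V_i}$) to a congruence $\Gamma_i \to \Lambda_i$.

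For the harder direction (decomposition implies congruence with $\Gamma$), I would construct a single orthogonal transformation $O \in O_n(\R)$ by assembling the congruences $\Gamma_i \to \Lambda_i$ into one block-orthogonal operator. Concretely, let $W_i := \mathrm{span}_\R(\Lambda_i)$, so that $\R^n = V_1 \oplus \cdots \oplus V_k = W_1 \oplus \cdots \oplus W_k$ are two orthogonal decompositions, and $\dim V_i = \dim W_i$ because congruent lattices have the same rank. The congruence between $\Gamma_i$ and $\Lambda_i$ provides an orthogonal isomorphism $O_i : V_i \to W_i$ with $O_i(\Gamma_i) = \Lambda_i$. Choosing compatible orthonormal bases of $V_1, \ldots, V_k$ and of $W_1, \ldots, W_k$, I would let $O$ act on each $V_i$ by $O_i$. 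Because both families are orthogonal, the resulting map is represented in these bases by a block-diagonal matrix whose blocks are orthogonal; hence $O \in O_n(\R)$, and by construction $O\Gamma = \Lambda$.

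The reducibility consequence is then immediate: if $\Gamma = \Gamma_1 \oplus \Gamma_2$ with $\dim \Gamma_i \geq 1$ and $\Lambda$ were congruent to $\Gamma$, the main equivalence would produce $\Lambda = \Lambda_1 \oplus \Lambda_2$ with each $\Lambda_i$ of positive dimension, contradicting irreducibility of $\Lambda$. The main obstacle is the assembly step in the harder direction: one has to verify cleanly that the subspace isometries $O_i : V_i \to W_i$ really do glue into a global element of $O_n(\R)$. This is essentially bookkeeping with orthonormal bases, but it is the one place where the orthogonality built into the definition of $\oplus$ is genuinely used, and it is where the argument would need to be written most carefully.
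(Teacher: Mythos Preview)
Your proposal is correct. The paper does not actually supply its own proof of this proposition; it simply quotes the result from \cite{nilsson2022isospectral} and moves on, so there is nothing in the paper to compare against. Your argument---push the decomposition through an orthogonal map for one direction, and assemble a block-orthogonal map from the subspace isometries $O_i : V_i \to W_i$ for the other---is the standard one and is exactly what one would expect the cited reference to do. The only delicate point, which you correctly flag, is the gluing step: it works because both $\{V_i\}$ and $\{W_i\}$ are \emph{orthogonal} decompositions of $\R^n$, so a vector $v = \sum v_i$ with $v_i \in V_i$ satisfies $\|Ov\|^2 = \sum \|O_i v_i\|^2 = \sum \|v_i\|^2 = \|v\|^2$, confirming $O \in O_n(\R)$.
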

It follows that if $\Gamma_1,\dots,\Gamma_k$ and $\Lambda_1,\dots,\Lambda_{k'}$ are all irreducible, then the products $\Gamma_1 \times \dots \times \Gamma_k$ and $\Lambda_1 \times \dots \times \Lambda_{k'}$ are congruent if and only if $k = k'$ and $\Gamma_i,\Lambda_i$ are congruent after possibly reordering.  To show that two lattices (or quadratic forms) are isospectral, we use the following result.
\begin{theorem}[\cite{nilsson2022isospectral}, Cor. 3.7] \label{theorem:Isospectrality_certificate}
    Let $P$ and $Q$ be the associated matrices of two even positive definite quadratic forms in $2k$ variables. Define for $N \geq 1$
    \begin{equation*}
        \mu_0(N) := N\prod_{\substack{p \text{ prime } \\ p | N}}\biggl(1 + \frac{1}{p}\biggr),
    \end{equation*}
    and let $N_P$ and $N_Q$ be the smallest positive integers such that $N_PP^{-1}$ and $N_QQ^{-1}$ are even, respectively. Then the quadratic forms are isospectral if and only if $det(P) = det(Q)$, $N_P = N_Q$, and their representation numbers $R(p,t)$ and $R(q,t)$ for $t \in [0, \frac{\mu_0(N_P)k}{12} + 1]$ are identical. %Moreover, they are isospectral if the quadratic forms with associated matrices 
    %\begin{equation*}
        %P' = \begin{bmatrix}
    %2 & 0 \\
    %0 & P  
%\end{bmatrix}, \,\, Q' = \begin{bmatrix}
    %2 & 0 \\
    %0 & Q  
%\end{bmatrix}
    %\end{equation*}
    %are isospectral. The quadratic forms with associated matrices $P,Q$ are integrally equivalent if and only if those with associated matrices $P',Q'$ are.
\end{theorem}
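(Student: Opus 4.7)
The plan is to reduce the statement to a finite-dimensional check on the theta series of the two quadratic forms, using the modularity of theta series of even positive definite quadratic forms and Sturm's bound on modular forms.

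First, the forward direction is immediate from the definitions: if $q$ and $p$ are isospectral then by the definition of representation numbers, $R(q,t) = R(p,t)$ for every $t$, and in particular on the indicated interval. The equalities $\det(P) = \det(Q)$ and $N_P = N_Q$ follow because both can be recovered from the full sequence of representation numbers (for example, via the leading asymptotics of $R(q,t)$ and the level of the associated theta series).

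For the converse, the key object is the theta series
\[
\theta_P(\tau) := \sum_{x \in \Z^{2k}} e^{\pi i \tau\, x^T P x} = \sum_{t \geq 0} R(p,t)\, e^{\pi i t \tau},
\]
and similarly $\theta_Q$, regarded as holomorphic functions on the upper half plane $\HH$. The first main step is to invoke the classical theorem of Hecke and Schoeneberg: because $P$ is the matrix of an even positive definite quadratic form in an even number $2k$ of variables, $\theta_P$ is a modular form of weight $k$ for the congruence subgroup $\Gamma_0(N_P)$ with a Nebentypus character $\chi$ depending only on $(-1)^k \det(P)$. Using the hypotheses $N_P = N_Q$ and $\det(P) = \det(Q)$, the theta series $\theta_P$ and $\theta_Q$ lie in the same finite-dimensional space $M_k(\Gamma_0(N_P),\chi)$. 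Proving modularity is the technical backbone of the argument, but we may cite it as a standard result; the proof uses Poisson summation to verify the transformation law under the generators of $\Gamma_0(N_P)$, together with the condition that $N_P P^{-1}$ is even to control the transformation under the Fricke involution.

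The second main step is Sturm's bound: two modular forms in $M_k(\Gamma_0(N),\chi)$ whose $q$-expansions agree at every coefficient of index $\leq k[\SL_2(\Z):\Gamma_0(N)]/12$ must coincide. Since $[\SL_2(\Z):\Gamma_0(N)] = \mu_0(N)$, the hypothesis that $R(p,t) = R(q,t)$ for every $t \in [0,\mu_0(N_P)k/12 + 1]$ makes the Fourier coefficients of $\theta_P - \theta_Q$ vanish up to the Sturm bound, forcing $\theta_P \equiv \theta_Q$. Comparing Fourier coefficients of this identity yields $R(p,t) = R(q,t)$ for \emph{all} $t \geq 0$, so the quadratic forms are isospectral.

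The main obstacle is the first step, namely establishing that $\theta_P$ lies in $M_k(\Gamma_0(N_P),\chi)$ with the correct level, weight, and character; the rest is a finite-dimensional linear algebra observation packaged as Sturm's bound. In an expository article, both ingredients can reasonably be cited, but the level computation (why $N_P$, defined as the smallest positive integer making $N_P P^{-1}$ even, is the correct level for $\theta_P$) is the subtle point that makes the statement quantitative and therefore algorithmically checkable.
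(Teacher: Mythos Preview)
The paper does not prove this theorem; it is quoted from \cite{nilsson2022isospectral} as Corollary~3.7 and immediately applied to Milnor's example without further argument. So there is no proof in the paper to compare against.

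Your sketch is the standard route and is correct in its essentials: the theta series $\theta_P$ of an even positive definite form in $2k$ variables lies in $M_k(\Gamma_0(N_P),\chi)$ by the Hecke--Schoeneberg theorem, the hypotheses $\det P=\det Q$ and $N_P=N_Q$ place $\theta_P$ and $\theta_Q$ in the same space, and Sturm's bound (with $[\SL_2(\Z):\Gamma_0(N)]=\mu_0(N)$) turns coincidence of finitely many Fourier coefficients into $\theta_P\equiv\theta_Q$, hence equality of all representation numbers. That is exactly how the cited reference proceeds.

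One point in your forward direction deserves a firmer justification. Recovering $\det P$ from the full sequence $R(p,t)$ is unproblematic (lattice point asymptotics). Recovering $N_P$ is less automatic: a modular form of level $N$ is also a modular form of every level $MN$, so ``the level of the associated theta series'' is not a priori well-defined from $\theta_P$ alone. The clean argument uses the Fricke involution: $\theta_P$ satisfies a functional equation under $\tau\mapsto -1/(N_P\tau)$ involving the dual form $N_P P^{-1}$, and the exponent of $\tau$ appearing there pins down $N_P$ (equivalently, $N_P$ is the exact level of $\theta_P$ as a newform-type invariant). In the cited reference this is handled as part of the package; if you want your write-up to stand alone, you should either state this as a lemma or simply note that the substantive direction is the ``if'' direction, which is what makes the theorem an algorithmic isospectrality certificate.
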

We now describe Milnor's example of two isospectral non-isometric Riemannian manifolds. Let $e_1,\dots,e_n$ denote the standard basis vectors of $\R^n$, write $\mathds{1} = \sum_{i=1}^n e_i$, and define the lattice
\begin{equation*}
    E_n = \biggl\{x \in \Z^n \cup \biggl(\frac{1}{2}\mathds{1} + \Z\biggr) : \sum_{i=1}^n x_i \in 2\Z\biggr\}.
\end{equation*}
A basis matrix for $E_{4n}$ is given by the $4n \times 4n$ matrix
\begin{equation*}
   (A_{4n})_{ij} =
   \begin{cases}
       2, \text{ if } i = j = 1, \\
       1, \text{ if } i = j, \,\,2 \leq i \leq 4n-1, \\
       -1, \text{ if } j = i + 1, \,\,1 \leq i \leq 4n-2, \\
       \frac{1}{2}, \text{ if } j = 4n, \,\,1 \leq i \leq 4n, \\
       0, \text{ otherwise. }
   \end{cases}
\end{equation*}
Indeed, the column vectors of $A_{4n}$ are linearly independent and are all in $E_{4n}$. Explicitly,
\begin{equation*}
    A_{4n} =
    \begin{bmatrix}
    2 & -1 & 0 & 0 & \cdots & 0 & 0 & 0 & \frac{1}{2} \\
    0 & 1 & -1 & 0 & \cdots & 0 & 0 & 0 & \frac{1}{2} \\
    0 & 0 & 1 & -1 & \cdots & 0 & 0 & 0 & \frac{1}{2} \\
    0 & 0 & 0 & 1 & \cdots & 0 & 0 & 0 & \frac{1}{2} \\
    \vdots & \vdots & \vdots & \vdots & \ddots & \vdots & \vdots & \vdots & \vdots \\
    0 & 0 & 0 & 0 & \cdots & 1 & -1 & 0 & \frac{1}{2} \\
    0 & 0 & 0 & 0 & \cdots & 0 & 1 & -1 & \frac{1}{2} \\
    0 & 0 & 0 & 0 & \cdots & 0 & 0 & 1 & \frac{1}{2} \\
    0 & 0 & 0 & 0 & \cdots & 0 & 0 & 0 & \frac{1}{2}
\end{bmatrix}.
\end{equation*}
By \cite{conway2013sphere}, $E_{8n}$ is irreducible and even for every $n \geq 1$. Here, even means that the square of the norm of any lattice vector is an even number. Now, Milnor's example consists of the flat tori associated to the lattices $E_{16}$ and $E_8 \times E_8$. Since $E_{16}$ is irreducible and $E_8 \times E_8$ is reducible, it follows from Proposition \ref{prop:reducible_lattice_products} that the flat tori are not isometric. It remains to show that they are isospectral. %To show this, we use Theorem \ref{theorem:Isospectrality_certificate}. 
The lattice $E_{16}$ has basis matrix $A_{16}$, and $E_8 \times E_8$ has
%\begin{equation*}
    %A_{16} = \begin{bmatrix}
    %0 & 0 & 0 & 0 & 0 & 0 & 0 & 0 & 0 & 0 & 0\\
    %0 & 0 & 0 & 0 & 0 & 0 & 0 & 0 & 0 & 0 & 0\\
    %0 & 0 & 0 & 0 & 0 & 0 & 0 & 0 & 0 & 0 & 0\\
    %0 & 0 & 0 & 0 & 0 & 0 & 0 & 0 & 0 & 0 & 0\\
    %0 & 0 & 0 & 0 & 0 & 0 & 0 & 0 & 0 & 0 & 0\\
    %0 & 0 & 0 & 0 & 0 & 0 & 0 & 0 & 0 & 0 & 0\\
    %0 & 0 & 0 & 0 & 0 & 0 & 0 & 0 & 0 & 0 & 0\\
    %0 & 0 & 0 & 0 & 0 & 0 & 0 & 0 & 0 & 0 & 0\\
    %0 & 0 & 0 & 0 & 0 & 0 & 0 & 0 & 0 & 0 & 0\\
    %0 & 0 & 0 & 0 & 0 & 0 & 0 & 0 & 0 & 0 & 0\\
    %0 & 0 & 0 & 0 & 0 & 0 & 0 & 0 & 0 & 0 & 0\\
    %0 & 0 & 0 & 0 & 0 & 0 & 0 & 0 & 0 & 0 & 0\\
    %0 & 0 & 0 & 0 & 0 & 0 & 0 & 0 & 0 & 0 & 0\\
    %0 & 0 & 0 & 0 & 0 & 0 & 0 & 0 & 0 & 0 & 0\\
    %0 & 0 & 0 & 0 & 0 & 0 & 0 & 0 & 0 & 0 & 0\\
    %0 & 0 & 0 & 0 & 0 & 0 & 0 & 0 & 0 & 0 & 0\\
%\end{bmatrix}
%\end{equation*}
\begin{equation*}
    A_{8 \times 8} =
    \begin{bmatrix}
        A_8 & \bm{0} \\
        \bm{0} & A_8
    \end{bmatrix}.
\end{equation*}
Two quadratic forms $P$ and $Q$ associated to $E_{16}$ and $E_8 \times E_8$ are those with associated matrices $A_{16}^TA_{16}$ and $A_{8 \times 8}^TA_{8 \times 8}$, respectively. These are even, positive definite, and have determinant 1, as the reader may verify. In fact, the inverses $(A_{16}^TA_{16})^{-1}$ and $(A_{8 \times 8}^TA_{8 \times 8})^{-1}$ are also even, which gives that $N_{A_{16}^TA_{16}} = N_{A_{8 \times 8}^TA_{8 \times 8}} = 1$. Since $\mu_0(1) = 1$, Theorem \ref{theorem:Isospectrality_certificate} gives that it is enough to show that $P$ and $Q$ have the same multiplicity over 1. That is, we need to show that the equations $P(x) = 1$ and $Q(x) = 1$ have as many distinct solutions in $\Z^{16}$.
Since $E_8$ and $E_{16}$ are even, 
\begin{align*}
    P(x) &= (A_{16}x)^T(A_{16}x) = ||A_{16}x||^2, \\
    Q(x) &= (A_{8\times 8}x)^T(A_{8\times 8}x) = ||A_{8 \times 8}x||^2
\end{align*} 
are always even. So, they both have multiplicity 0 over 1. This completes the proof that the flat tori are isospectral, and we conclude that the spectrum of a Riemannian manifold in general does not determine all of its geometry.

\section{One cannot hear the shape of a Riemannian manifold but can one hear the shape of a drum?}  \label{sec:kac}

Just two years after Milnor's article appeared, Kac published the popular scientific article \cite{kac1966can} titled ``Can one hear the shape of a drum?''  He first heard the problem from Professor Bochner in mathematical terms:  if two planar domains have the same Laplace spectrum, are they congruent in the sense of Euclidean geometry?  The formulation in the title of Kac's article is motivated by Professor Bers's reformulation as ``If you had perfect pitch could you find the shape of a drum?''  We reasonably expect that Kac was inspired by Milnor's article, because Kac describes Milnor's construction on page 3 of \cite{kac1966can}.  Thanks to Kac, it is now common to refer to quantities that are determined by the spectrum, namely spectral invariants, as quantities that can be `heard.'  At the time of his article, Kac knew that one can hear the area and perimeter of a bounded domain in the plane.  However, the answer to Kac's question was unknown, and he wrote that he personally believed that one cannot hear the shape of a tambourine but may well be wrong.  He proposed to investigate how much about the shape of a domain can be inferred from the knowledge of the spectrum and ``to impress upon you the multitude of connections between our problems and various parts of mathematics and physics'' \cite[p. 3]{kac1966can}. Kac was forthcoming that his article was not meant to be a typical rigorous research article, but rather ``more in the nature of a leisurely excursion than an organized tour'' \cite[p.1]{kac1966can}. So, although certain results were heuristically motivated rather than rigorously proven, it turns out that with more modern mathematical techniques, all of Kac's heuristics can be made rigorous.  We therefore take the opportunity here to explain the mathematics of Kac's inspiring article and to show how to make them rigorous.  
First we will need some preliminary notions. 

\subsection{Geometric preliminaries}
In \S\ref{sec:microlocal} , we will need the notion of domains in the plane converging to another domain. To define this, we need a way of measuring the distance between two sets. We will use \textit{Hausdorff distance}. % (see \cite[ch. 4-5]{petruninlectures}). 
Recall that an \textit{isometry} in $\R^2$ is an affine linear mapping from $\R^2$ to itself which corresponds to a composition of rotations, translations, and reflections. If $\Theta$ and $\Omega$ are two sets in $\R^2$, then the Hausdorff distance between $\Theta$ and $\Omega$ is defined as
    \begin{equation*}
        D(\Theta, \Omega) = \inf_{T, S \text{ isometries}} d_H(T(\Theta), S(\Omega)),
    \end{equation*}
    where
    \begin{equation*}
        d_H(\Theta, \Omega) = \max\left\{\sup_{x \in \Theta} \inf_{y \in \Omega} ||x-y||, \,\,\sup_{x \in \Omega} \inf_{y \in \Theta} ||x-y|| \right\}.
    \end{equation*}
    Given a sequence of sets $\{\Omega_k\}_{k=1}^\infty$ in $\R^2$, we say that $\Omega_k$ \textit{converges in Hausdorff} to some set $\Omega \subset \R^2$, written $\Omega_k \to \Omega$ as $k \to \infty$, if $D(\Omega_k, \Omega) \to 0$ as $k \to \infty$. 

For any $x \in \R^2$, $\inf_{y \in \Omega}||x-y||$ is often called the \textit{distance} between $x$ and $\Omega$. The following proposition gives some justification for the above definition. The proof is a straightforward application of the definition and can be found in \cite[p. 6-7]{maardby2023mathematics}.
\begin{proposition} \label{prop:Hausdorf_distance_less_than_epsilon}
    Given two non-empty sets $\Theta, \Omega \subset \R^2$, we have $d_H(\Theta, \Omega) = 0$ if and only if $\overline{\Theta} = \overline{\Omega}$.  If we further assume that $\Theta$ and  $\Omega$ are closed, and $\epsilon > 0$, we have $d_H(\Theta, \Omega) < \epsilon$ if and only if
    \begin{equation} \label{eq:union_balls}
        \begin{split}
            \Omega \subset \bigcup_{x \in \Theta} B_{\epsilon}(x), \\
        \Theta \subset \bigcup_{x \in \Omega} B_{\epsilon}(x),
        \end{split}
    \end{equation}
    where
    \begin{equation*}
        B_\epsilon(x) = \{y \in \R^2 : ||x-y|| < \epsilon\}.
    \end{equation*}
\end{proposition}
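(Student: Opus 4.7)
My plan is to prove the two equivalences separately by unpacking the Hausdorff distance into pointwise distance conditions. For the first equivalence, the condition $d_H(\Theta,\Omega)=0$ is the simultaneous vanishing of both suprema in its definition, i.e.\ $\inf_{y\in\Omega}\|x-y\|=0$ for every $x\in\Theta$ and symmetrically. Since a point $x$ has zero distance to $\Omega$ precisely when $x\in\overline{\Omega}$, these two vanishing conditions together read $\Theta\subset\overline{\Omega}$ and $\Omega\subset\overline{\Theta}$, which are jointly equivalent to $\overline{\Theta}=\overline{\Omega}$. The converse direction, assuming equal closures, runs along the same chain of equivalences read backwards.

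For the second equivalence I would first rewrite the inclusions in distance language: $\Theta\subset\bigcup_{y\in\Omega}B_\epsilon(y)$ holds precisely when $\mathrm{dist}(x,\Omega):=\inf_{y\in\Omega}\|x-y\|<\epsilon$ for every $x\in\Theta$, and analogously for the reverse inclusion. The forward direction is then immediate: if $d_H(\Theta,\Omega)<\epsilon$, then for each $x\in\Theta$ the infimum $\mathrm{dist}(x,\Omega)$ is dominated by the supremum $\sup_{x\in\Theta}\mathrm{dist}(x,\Omega)$, which appears inside the max defining $d_H$ and is therefore strictly less than $\epsilon$; symmetrically for the other inclusion.

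The backward direction is the delicate step and I expect it to be the main obstacle. Having the pointwise strict inequality $\mathrm{dist}(x,\Omega)<\epsilon$ at every $x\in\Theta$, one must upgrade it to the strict supremum bound $\sup_{x\in\Theta}\mathrm{dist}(x,\Omega)<\epsilon$. This is precisely where closedness, together with the boundedness implicit in the paper's planar-domain setting (making $\Theta$ and $\Omega$ compact), is used: the function $x\mapsto\mathrm{dist}(x,\Omega)$ is $1$-Lipschitz on $\R^2$, hence continuous, so its supremum over the compact set $\Theta$ is attained at some $x_0\in\Theta$, and the pointwise hypothesis at $x_0$ promotes to the strict supremum bound. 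Exchanging the roles of $\Theta$ and $\Omega$ handles the other term in the max, and the two bounds together give $d_H(\Theta,\Omega)<\epsilon$. Without closedness and boundedness one would only recover $d_H(\Theta,\Omega)\leq\epsilon$, since the pointwise inequalities could approach $\epsilon$ along an escaping sequence, so the compactness step is genuinely essential.
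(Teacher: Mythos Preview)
Your argument is correct and is exactly the direct unpacking of definitions that the paper has in mind; the paper itself does not supply a proof but simply refers the reader to \cite{maardby2023mathematics}. Your treatment of the first equivalence and of the forward direction in the second equivalence is clean and complete.

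Your observation about the backward direction is sharp and worth emphasizing: the proposition as stated (assuming only closedness) is in fact \emph{false} for unbounded sets. For instance, take $\Theta=\{(n,0):n\geq 1\}$ and $\Omega=\{(n,1-\tfrac{1}{n}):n\geq 1\}$ in $\R^2$; both are closed, and each point of one set lies within distance $1-\tfrac{1}{n}<1$ of the other, so both inclusions in \eqref{eq:union_balls} hold with $\epsilon=1$, yet $d_H(\Theta,\Omega)=\sup_{n}(1-\tfrac{1}{n})=1\not<1$. Your fix via compactness---the supremum of the continuous map $x\mapsto\inf_{y\in\Omega}\|x-y\|$ is attained on a compact $\Theta$---is the correct repair and matches the paper's intended use, since every application of the proposition in the text involves closures of bounded domains.
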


In \S\ref{sec:microlocal} we will use the \textit{Euler characteristic} (sometimes called the \textit{Euler number}) of a bounded domain in the plane. If $\Omega \subset \R^2$ is bounded and consists of finitely many polygons, then its Euler characteristic is given by
\begin{equation} \label{eq:euler_char}
        \chi(\Omega) = V - E + F,
    \end{equation}
where $V$ is the number of vertices in $\Omega$, $E$ is the number of edges in $\Omega$, and $F$ is the number of polygons in $\Omega$.

If instead $\Omega$ is a smoothly bounded domain, one computes the Euler characteristic using \textit{triangulations}. Intuitively, a triangulation of $\Omega$ is a collection of ``triangles'' which need not have straight edges. Their union should be $\Omega$ and the intersection of two triangles is either empty, a common edge, or a common vertex. For the precise definition of a triangulation, we refer to \cite[Definition 13.4.1]{pressley2010elementary}. The Euler characterstic of $\Omega$ is then given by the Euler characterstic of the given triangulation. By \cite[Corollary 13.4.6]{pressley2010elementary}, $\chi(\Omega)$ is independent of the choice of triangulation. A \em hole \em of $\Omega$ is a bounded connected component of $\R^2 \setminus \Omega$. The number of holes in a domain is related to the Euler characteristic by the following result. 

\begin{theorem}[\cite{hatcher2005algebraic}, Thm. 2.44] \label{theorem:holes}
    If $\Omega$ has $r$ holes, then $\chi(\Omega) = 1-r$.
\end{theorem}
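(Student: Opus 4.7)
The plan is to exploit the homotopy invariance of the Euler characteristic, which is what the cited Theorem 2.44 of \cite{hatcher2005algebraic} provides, by showing that $\Omega$ deformation retracts onto a wedge of $r$ circles and then computing the Euler characteristic of that model directly from the defining formula \eqref{eq:euler_char}.

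First I would construct the homotopy equivalence $\Omega \simeq W_r := \bigvee_{i=1}^{r} S^1$. Starting from a triangulation $\mathcal{T}$ of $\overline{\Omega}$ (which exists by \cite[Cor.\ 13.4.6]{pressley2010elementary} since $\partial \Omega$ is piecewise smooth or polygonal), I would iteratively perform elementary collapses: identify a $2$-simplex having a free edge on $\partial \Omega$, meaning an edge not shared with any other $2$-simplex, and collapse that simplex onto the union of its remaining two edges. Each elementary collapse is a deformation retraction, so the homotopy type is preserved. After this process terminates one obtains a $1$-dimensional CW complex (a graph) $G \subset \overline{\Omega}$ with $\Omega \simeq G$. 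Iteratively collapsing $1$-valent vertices of $G$ further reduces it to a connected graph with every vertex of valence $\geq 2$, which is well known to be homotopy equivalent to a wedge $\bigvee_{k} S^1$ with $k$ equal to the first Betti number $b_1(\Omega)$.

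Second, I would identify $b_1(\Omega)$ with the number of holes $r$. The cleanest route is Alexander duality on the $2$-sphere $S^2 = \R^2 \cup \{\infty\}$: for a compact subset $K \subset S^2$ that is a finite CW complex, the number of connected components of $S^2 \setminus K$ equals $b_1(K) + 1$. Taking $K = \overline{\Omega}$, the complement in $S^2$ consists of the $r$ holes of $\Omega$ together with a single unbounded component of $\R^2 \setminus \overline{\Omega}$ now containing $\infty$, giving $r+1$ components and hence $b_1(\overline{\Omega}) = r$. Since the inclusion $\Omega \hookrightarrow \overline{\Omega}$ is a homotopy equivalence for domains with piecewise smooth boundary, this yields $b_1(\Omega) = r$, and combined with the previous step $\Omega \simeq W_r$.

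Finally, reading off $\chi(W_r)$ from the CW structure with one $0$-cell and $r$ $1$-cells gives $\chi(W_r) = 1 - r$ directly from \eqref{eq:euler_char}, and homotopy invariance then yields $\chi(\Omega) = 1 - r$. The $r = 0$ case recovers $\chi(\Omega) = 1$ for a simply connected domain as a sanity check. The main obstacle is Step 2, which is intrinsically two-dimensional and rests on Alexander duality (or equivalently on the Jordan curve theorem applied hole by hole); a more elementary alternative is an induction on $r$ in which one cuts $\Omega$ along a simple arc from $\partial \Omega$ to the boundary of one chosen hole and verifies via \eqref{eq:euler_char} that cutting reduces the number of holes by one while changing $\chi$ by exactly $+1$, so both the left and right sides of $\chi(\Omega) = 1-r$ shift by the same amount.
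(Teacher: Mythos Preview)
The paper does not give its own proof of this statement; it simply cites Hatcher's Theorem~2.44, which is the result that the Euler characteristic of a finite CW complex equals the alternating sum of Betti numbers and is therefore a homotopy invariant. Your proposal supplies exactly what the paper omits: an argument deducing $\chi(\Omega)=1-r$ from that invariance.

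Your route is correct and standard. The collapsing of a triangulation of $\overline{\Omega}$ to a graph works because any $2$-simplex in a planar simplicial complex with nonempty boundary has a free edge, so the elementary collapses terminate only when no $2$-cells remain; you might add one sentence making this explicit. The Alexander duality step on $S^2$ is clean and correctly counts $r+1$ complementary components (the $r$ holes plus the single unbounded piece), yielding $b_1(\overline{\Omega})=r$. The final computation $\chi(W_r)=1-r$ is immediate. Your alternative induction---cutting along an arc from $\partial\Omega$ to one hole boundary and checking both sides shift by $+1$---is also sound and arguably better matched to the elementary tone of the surrounding section, since it avoids invoking Alexander duality.
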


\subsection{What could Kac hear?} \label{sec:Kac_paper}
In \S \ref{sec:Weyl_approx} we showed how Weyl proved that the area of a domain $\Omega \subset \R^2$ is a spectral invariant. Kac was aware of this result and wanted to find other quantities of $\Omega$ that are spectral invariants. Is perhaps the entire set $\Omega$ a spectral invariant (up to isometries)? In other words, can one hear the shape of a drum? Studying spectral invariants is interesting for several reasons, one of which being its connections to physics. To explore these connections between mathematics and physics, consider a system of $M$ particles in a three-dimensional volume $\Omega$ such that the system is in equilibrium with temperature $T$. We assume that the particles comprise an ideal gas.  The Schrödinger equation for such an ideal gas is a separable equation given by 
\begin{equation} \label{eq:Schrödinger}
    \begin{cases}
        \frac{\hbar^2}{2m}\Delta \phi(x) = -E\phi(x) \text{ in } \Omega, \\ % - V(r_1,\dots,r_M)\psi = 0 since ideal gas
        \phi(x) = 0 \text{ on } \partial\Omega.
    \end{cases}
\end{equation}
Above, $\hbar = h/2\pi$ with $h$ being Planck's constant, $m$ is the mass of each particle, and $E > 0$ is some constant. Rearranging this equation is equivalent to the Laplace eigenvalue equation 
\begin{equation} \label{eq:Shrödinger_rearrange}
    \begin{cases}
        \frac{1}{2}\Delta \phi(x) + \lambda \phi(x) = 0 
        \text{ in } \Omega, \\ % - V(r_1,\dots,r_M)\psi = 0 since ideal gas
        \phi(x) = 0 \text{ on } \partial\Omega
    \end{cases}
\end{equation}
with $\lambda = mE/\hbar^2$. In particular, Kac chose to include the factor $c^2 = 1/2$ in his eigenvalue equation. One could just as easily get the more standard eigenvalue equation 
\begin{equation} \label{eq:Laplace_c^2=1}  
    \begin{cases}
        \Delta \phi(x) + \Lambda \phi(x) = 0 
        \text{ in } \Omega, \\ % - V(r_1,\dots,r_M)\psi = 0 since ideal gas
        \phi(x) = 0 \text{ on } \partial\Omega
    \end{cases}
\end{equation}
by setting $\Lambda = 2\lambda$.

If $(\lambda_n,\phi_n)$ denotes the $n$th normalized eigenpair of \eqref{eq:Shrödinger_rearrange}, then we know from physics Boltzmann statistics \cite[ch. 6]{schroeder1999introduction}, \cite[ch. 6]{henningson2013borja} that the probability of finding specified particles at $x_1,\dots,x_M$ within $dx_1,\dots,dx_M$ is 
\begin{equation} \label{eq:quantum}
    \prod_{k = 1}^M \frac{\sum_{n = 1}^{\infty}e^{-\lambda_n\tau}\phi_n^2(x_k)}{\sum_{n = 1}^{\infty}e^{-\lambda_n\tau}}dx_k, \,\,\, \tau = \frac{\hbar^2}{mkT}.
\end{equation}
If the particles were instead moving uniformly in $\Omega$, then the probability of finding specified particles would be
\begin{equation} \label{eq:classical}
    \frac{dx_1\dots dx_M}{|\Omega|^M}.
\end{equation}  
By the correspondence principle in quantum mechanics, the quantum perspective \eqref{eq:quantum} and classical perspective \eqref{eq:classical} will approach each other as time goes to zero \cite[p. 160-161] {tipler2007modern}. This motivates the study of the short time asymptotic behavior of the heat trace
\begin{equation*}
    \sum_{n = 1}^\infty e^{-\lambda_n t}, \,\, t \to 0,
\end{equation*}
which will (hopefully) reveal geometric properties of $\Omega$.  Kac explained that one of the goals of his article was to obtain the first three terms in the short time asymptotic expansion of the heat trace in the case where $\Omega$ is a smoothly bounded convex domain in the plane, namely to prove that 
\begin{equation} \label{eq:heattrace_threeterms}
    \sum_{n = 1}^\infty e^{-\lambda_n t} \sim \frac{|\Omega|}{2\pi t} - \frac{|\partial \Omega|}{4\sqrt{2\pi t}} + \frac{\chi(\Omega)}{6}, \,\,\, t \to 0.
\end{equation}
%provided that $\Omega$ is convex and smoothly bounded. 
This would show that in addition to the area and the perimeter, the Euler characteristic is also a spectral invariant.  To rigorously obtain this asymptotic expansion, we require further particulars concerning the heat equation, its fundamental solution, and its trace. 

\subsubsection{Hot preliminaries} \label{sec:heatkernel}
Here we summarize the definitions and results needed for %the so called \textit{heat kernel} and the \textit{heat trace} for 
different objects connected to the heat equation, including the heat kernel and the heat trace. %This will be particularly useful when we study Kac's article in \S3 and \S4. 
More details on these topics and the proofs of the statements below can be found in \cite%[\S 4.3, \S 6.2]
{albin2017linear}. The heat equation for a bounded domain in $\R^n$ with the Dirichlet boundary condition and continuous initial data is 
\begin{equation} \label{eq:heat}
        \begin{cases}
            \frac{\partial }{\partial t}u(x,t) - \Delta u(x, t) = 0, \,\, x \in \Omega, \,\, t > 0, \\
            u(\cdot,t) \in H_0^1(\Omega) \cap H^2(\Omega), \,\, t > 0, \\
            u(x,0) = f(x) \in \mathscr{C}_c^0(\overline{\Omega}).
        \end{cases}
\end{equation}
We have the following result which summarizes the existence and properties of the fundamental solution to the heat equation.  
\begin{theorem}
    There exists a unique map 
%Another way to express the heat kernel is using the \textit{heat operator}, which is the unique map 
$e^{-t\Delta} : \mathscr{C}^0(\overline{\Omega}) \to \mathscr{C}^\infty(\overline{\Omega} \times \R^+)$, called \textit{heat operator}, which satisfies  
\begin{equation*} 
        \begin{cases}
            \left(\frac{\partial }{\partial t} - \Delta\right)e^{-t\Delta}(f)(x,t) = 0, \,\, x \in \Omega, \,\, t > 0, \\
            e^{-t\Delta}(f)(x,0) = f(x) \in \mathscr{C}_c^0(\overline{\Omega}).
        \end{cases}
\end{equation*}
Moreover, there exists a map $h : \Omega \times \Omega \times \R^+ \to \R$ such that $h$ is the Schwartz kernel of $e^{-t\Delta}$, meaning that
\begin{equation*}
    e^{-t\Delta}(f)(x,t) = \int_\Omega h(x,y,t)f(y) dy, \,\, x \in \Omega, \,\, t > 0.
\end{equation*}
This Schwartz kernel, $h$, is known as the \em heat kernel \em and satisfies 
    \begin{align}
        h(x,y,t) &= h(y,x,t), \,\, x,y \in \Omega, \,\, t > 0, \label{eq:heat_symm} \\
        \left(\frac{\partial}{\partial t} - \Delta\right)h(x,y,t) &= 0 \text{ for } \Delta \text{ acting in either } x \text{ or } y, \,\, t > 0, \label{eq:heat_eq_ker} \\
         \lim_{t \to 0} \int_\Omega h(x,y,t) f(y) dy &= f(x). \label{eq:heat_ker_ic} 
    \end{align}
%    where $\delta_0$ is the Dirac delta distribution.  More precisely the last equation means that 
 %   \[ \lim_{t \to 0} \int_\Omega h(x,y,t) f(y) dy = f(x).\]
\end{theorem}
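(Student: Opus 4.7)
The plan is to construct the heat operator explicitly from the spectral decomposition of the Dirichlet Laplacian and verify the stated properties termwise. I would first invoke the standard fact that $\Delta$ on $H_0^1(\Omega) \cap H^2(\Omega)$ is self-adjoint with compact resolvent, hence admits a discrete spectrum $0 < \lambda_1 \leq \lambda_2 \leq \cdots \to \infty$ together with an $L^2(\Omega)$-orthonormal basis of eigenfunctions $\{\phi_n\}$; elliptic regularity gives $\phi_n \in \mathscr{C}^\infty(\Omega)$, with smoothness up to any smooth portion of $\partial\Omega$. I would then take as candidate heat kernel and heat operator
\begin{equation*}
    h(x,y,t) := \sum_{n=1}^\infty e^{-\lambda_n t} \phi_n(x)\phi_n(y), \qquad e^{-t\Delta}(f)(x,t) := \int_\Omega h(x,y,t) f(y) \, dy.
\end{equation*}

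Symmetry \eqref{eq:heat_symm} is immediate from the summand. For convergence and smoothness of $h$ on $\Omega \times \Omega \times (0,\infty)$, I would combine Weyl's law (so that $\lambda_n$ grows polynomially in $n$) with elliptic estimates bounding $\|\phi_n\|_{\mathscr{C}^k}$ on compact subdomains by a polynomial in $\lambda_n$; the exponential factor $e^{-\lambda_n t}$ then absorbs any polynomial growth, yielding absolute uniform convergence on compact subsets of $\{t > 0\}$ together with termwise differentiability of every finite order in $x$, $y$, and $t$. The heat equation \eqref{eq:heat_eq_ker} in either spatial variable follows termwise, since each summand $e^{-\lambda_n t}\phi_n(x)\phi_n(y)$ satisfies the equation by virtue of the eigenvalue relation $\Delta\phi_n = \lambda_n \phi_n$.

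The main obstacle is the initial condition \eqref{eq:heat_ker_ic}. The easy half is $L^2$-convergence: expanding $f$ in the eigenbasis yields $e^{-t\Delta}(f) = \sum_n e^{-\lambda_n t}\langle f,\phi_n\rangle \phi_n \to f$ in $L^2(\Omega)$ by dominated convergence. Upgrading this to a uniform pointwise statement for $f \in \mathscr{C}_c^0(\overline\Omega)$ I would do by comparison with the free Euclidean heat kernel $p_t(x,y) = (4\pi t)^{-n/2}\exp(-|x-y|^2/4t)$: on a compact subset $K \subset \Omega$ the difference $r(x,y,t) := h(x,y,t) - p_t(x,y)$ solves a parabolic boundary value problem on $\Omega$ with boundary data bounded by $O(e^{-c/t})$ for some $c > 0$ depending on the distance from $K$ to $\partial\Omega$, so a parabolic maximum principle gives $r = O(e^{-c/t})$ on $K$, while the Gaussian part already reproduces the delta function in the limit. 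Uniqueness of the heat operator then follows from a standard energy argument: if $u$ solves the homogeneous Dirichlet heat equation with zero initial data, then $\tfrac{d}{dt}\|u(\cdot,t)\|_{L^2(\Omega)}^2 \leq 0$, forcing $u \equiv 0$. The most delicate issue throughout is ensuring that the Euclidean-kernel comparison remains valid up to non-smooth portions of $\partial\Omega$; this is where parametrix constructions or sharp boundary heat-kernel estimates would ultimately be required for fully general domains, and it is precisely the technical heart needed for the short-time expansion \eqref{eq:heattrace_threeterms} that Kac's paper pursues.
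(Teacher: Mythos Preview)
The paper does not actually prove this theorem: immediately before stating it, the authors write that ``the proofs of the statements below can be found in \cite{albin2017linear},'' and no argument is supplied. So there is no proof in the paper to compare against directly.

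That said, your spectral construction is the standard route and is essentially the one the paper implicitly relies on: the very next result in the paper (Proposition~\ref{prop:heatkernel_eigenfunctions}) states precisely your candidate formula $h(x,y,t) = \sum_n e^{-\lambda_n t}\phi_n(x)\overline{\phi_n(y)}$, so your approach is fully consistent with the paper's framework. Your outline is sound: Weyl growth of $\lambda_n$ plus polynomial Sobolev/elliptic bounds on $\|\phi_n\|_{\mathscr{C}^k}$ against the exponential decay $e^{-\lambda_n t}$ gives locally uniform $\mathscr{C}^\infty$ convergence for $t>0$; symmetry and the heat equation follow termwise; uniqueness via the energy identity is correct. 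Your treatment of the initial condition---$L^2$ convergence from the expansion, upgraded to pointwise via comparison with the Euclidean kernel and the parabolic maximum principle---is also a standard and valid strategy. You are right that the genuinely delicate point is regularity up to $\partial\Omega$ (the theorem claims smoothness on $\overline\Omega \times \R^+$, not just $\Omega \times \R^+$), and this is exactly where boundary assumptions and parametrix methods enter; the paper sidesteps this by citing Albin rather than working it out.
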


The heat kernel for a bounded domain can be expressed in terms of the eigenfunctions and eigenvalues of the Laplacian.     

\begin{proposition} \label{prop:heatkernel_eigenfunctions}
    If $\Omega$ is bounded, then
    \begin{equation} \label{eq:heatkernel_eigenfunctions}
        h(x,y,t) = \sum_{n = 1}^\infty e^{-\lambda_nt}\phi_n(x)\overline{\phi_n(y)},
    \end{equation}
    where $\phi_n$ are normalized eigenfunctions with corresponding eigenvalues $\lambda_n$, so that 
    \begin{equation*} 
            \begin{cases}
                \Delta \phi_n(x) = \lambda_n\phi_n(x), \,\, x \in \Omega, \\
                \phi_n \in H_0^1(\Omega) \cap H^2(\Omega).
            \end{cases}
    \end{equation*}
\end{proposition}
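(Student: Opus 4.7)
The plan is to define the candidate
\[ k(x,y,t) := \sum_{n=1}^\infty e^{-\lambda_n t} \phi_n(x) \overline{\phi_n(y)} \]
and then invoke uniqueness of the heat kernel by checking that $k$ satisfies the defining properties \eqref{eq:heat_symm}--\eqref{eq:heat_ker_ic}. The backbone is the spectral theorem: because $\Omega$ is bounded, the Dirichlet Laplacian has compact resolvent, yielding an $L^2(\Omega)$-orthonormal basis $\{\phi_n\}_{n\ge 1}$ of real eigenfunctions with eigenvalues $0 < \lambda_1 \le \lambda_2 \le \cdots \to \infty$. Weyl's law from \S\ref{s:weyl_law} gives $\lambda_n$ growing like $n^{2/n}$, which is what drives convergence. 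Taking the $\phi_n$ real renders symmetry $k(x,y,t) = k(y,x,t)$ immediate.

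First I would establish absolute and uniform convergence of the series on sets of the form $\overline\Omega \times \overline\Omega \times [\delta, \infty)$ for every $\delta > 0$, together with term-by-term differentiability in $x$, $y$, and $t$. The needed input is a polynomial bound $\|\phi_n\|_{C^k(\overline\Omega)} \le C_k \lambda_n^{p_k}$, obtained by bootstrapping elliptic regularity from the eigenvalue equation $\Delta \phi_n = \lambda_n \phi_n$ together with Sobolev embedding. Once this is in hand, the factor $e^{-\lambda_n \delta}$ crushes any polynomial growth in $\lambda_n$ against the Weyl density of eigenvalues, and the series together with all of its relevant derivatives converges absolutely and uniformly.

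Verifying the three defining properties is then mechanical. The heat equation holds termwise:
\[ \left( \frac{\partial}{\partial t} - \Delta_x \right) e^{-\lambda_n t} \phi_n(x) \overline{\phi_n(y)} = (-\lambda_n + \lambda_n) e^{-\lambda_n t} \phi_n(x) \overline{\phi_n(y)} = 0, \]
and the Dirichlet boundary condition is inherited from each $\phi_n \in H_0^1(\Omega)$. For the initial condition, expand $f \in \mathscr{C}_c^0(\overline\Omega)$ in the orthonormal basis as $f = \sum_n c_n \phi_n$ with $c_n = \langle f, \phi_n \rangle_{L^2}$, and compute by Fubini
\[ \int_\Omega k(x,y,t) f(y) \, dy = \sum_{n=1}^\infty c_n\, e^{-\lambda_n t} \phi_n(x) \longrightarrow \sum_{n=1}^\infty c_n \phi_n(x) = f(x) \]
as $t \to 0^+$, where convergence is first in $L^2$ (by dominated convergence on the $\ell^2$ coefficients) and then pointwise using the smoothness of $k$ for $t > 0$. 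Uniqueness of the heat kernel from the preceding theorem then forces $h = k$.

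The main obstacle is the polynomial bound on $\|\phi_n\|_{C^k}$, which requires elliptic regularity up to $\partial \Omega$. This is standard when $\partial\Omega$ is smooth, but for merely piecewise smooth boundaries corner singularities can spoil uniform $C^k$ estimates; the fix is to work in local Sobolev norms and exploit that one only needs classical smoothness of $k$ on the interior, where the series automatically converges in $C^\infty$ by interior elliptic regularity. Everything else — symmetry, the heat equation, and the initial condition — is a routine consequence of the spectral theorem together with the uniform convergence established in the second step.
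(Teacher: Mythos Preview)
The paper does not actually prove this proposition: it is stated without proof in \S\ref{sec:heatkernel}, with the surrounding text explicitly deferring ``the proofs of the statements below'' to the reference \cite{albin2017linear}. So there is no in-paper argument to compare against.

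Your approach --- define the candidate series, use Weyl's law together with elliptic regularity to get polynomial eigenfunction bounds, deduce locally uniform convergence with termwise differentiation, verify \eqref{eq:heat_symm}--\eqref{eq:heat_ker_ic}, and invoke the uniqueness clause of the preceding theorem --- is the standard one and is correct in outline. One small slip: you write that $\lambda_n$ grows like ``$n^{2/n}$'', which conflates the eigenvalue index with the spatial dimension; the Weyl exponent in dimension $d$ is $\lambda_n \sim c\, n^{2/d}$ (so in the plane $\lambda_n \sim c n$). Also note that the uniqueness statement in the paper is phrased for the heat \emph{operator} $e^{-t\Delta}$ rather than directly for the kernel; your argument really shows that the integral operator with kernel $k$ solves the same initial-value problem, hence coincides with $e^{-t\Delta}$, and then $k=h$ follows because $h$ is defined as its Schwartz kernel. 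This is a cosmetic point, but worth phrasing carefully.
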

The \textit{heat trace} is defined as % the trace of the heat kernel, which by \eqref{eq:heatkernel_eigenfunctions} can be calculated as
    \begin{equation*}
        \int_\Omega h(x,x,t) dx = \sum_{n = 1}^\infty e^{-\lambda_nt} \int_\Omega|\phi_n(x)|^2 dx = \sum_{n = 1}^\infty e^{-\lambda_nt}.
    \end{equation*}
Note that integrating termwise is justified by the monotone convergence theorem (see \cite[p. 51]{folland1999real}).  The heat trace is a spectral invariant, and therefore any information it contains can be heard.  To glean information from the heat trace, one can use local models to approximate the heat kernel for the domain $\Omega$.  On the interior of the domain, at some positive distance from the boundary, the heat kernel resembles the so-called free heat kernel for Euclidean space, at least for small time.   It is therefore convenient to explicitly compute this heat kernel for Euclidean space $\R^n$.  We include a proof to impress upon readers the properties of the Euclidean heat kernel.  In particular, away from the diagonal, that is for $x \neq y$, this heat kernel tends to $0$ rapidly as $t \downarrow 0$, whereas for $x=y$, it blows up at the order of $t^{-n/2}$.  Consequently, the initial condition \eqref{eq:heat_ker_ic} is obtained as a limit rather than simply substituting $t=0$ into the expression.  

\begin{proposition} \label{prop:heatkernel_R^n}
    If $\Omega = \R^n$, then the unique element of $L^2(\Omega \times \Omega \times \R^+)$ that satisfies \eqref{eq:heat_symm}, \eqref{eq:heat_eq_ker}, and \eqref{eq:heat_ker_ic} is 
    \begin{equation*}
            h_{\R^n}(x,y,t) = \frac{1}{(4\pi t)^{n/2}}e^{-||x-y||^2/4t}, \,\, x,y \in \R^n, \,\, t > 0.
    \end{equation*}
    \begin{proof}
    One can check that this function satisfies \eqref{eq:heat_symm} and \eqref{eq:heat_eq_ker} by direct computation.  It is not immediately obvious, however, that 
    \begin{equation*}
        \lim_{t \to 0} \int_{\R^n} \frac{1}{(4\pi t)^{n/2}}e^{-||x-y||^2/4t} f(y) dy = f(x). 
    \end{equation*}
    %we follow \cite[Page 149-150 and exercise 7]{albin2017linear}. 
    To prove this, fix $x \in \R^n$, let $\epsilon > 0$, and choose $\delta > 0$ so that $|f(y)-f(x)| < \epsilon$ whenever $||y-x|| < \delta$. This is possible because we have assumed that $f$ is continuous and has compact support.  It is a straightforward computation to show that
    \begin{equation*}
        \int_{\R^n} \frac{1}{(4\pi t)^{n/2}}e^{-||x-y||^2/4t} dx = 1.
    \end{equation*}
    This gives that
    \begin{align*}
        &\left|\int_{\R^n} \frac{1}{(4\pi t)^{n/2}}e^{-||x-y||^2/4t}f(y) dy - f(x) \right| = \left|\int_{\R^n} \frac{1}{(4\pi t)^{n/2}}e^{-||x-y||^2/4t}(f(y)-f(x)) dy \right| \\
        \leq &\int_{\R^n} \frac{1}{(4\pi t)^{n/2}}e^{-||x-y||^2/4t}|f(y)-f(x)| dy 
    \end{align*}
    \begin{equation} \label{eq:IJ}
        \begin{split}
            = &\int_{B_\delta(x)} \frac{1}{(4\pi t)^{n/2}}e^{-||x-y||^2/4t}|f(y)-f(x)| dy \\
            + &\int_{\R^n \backslash B_\delta(x)} \frac{1}{(4\pi t)^{n/2}}e^{-||x-y||^2/4t}|f(y)-f(x)| dy,
        \end{split}
    \end{equation}
    where, as before,
    \begin{equation*}
        B_\delta(x) = \{y \in \R^n : ||y-x|| < \delta\}.
    \end{equation*}
    Since $|f(y) - f(x)| < \epsilon$ for all $y \in B_\delta(x)$, the first term in \eqref{eq:IJ} can be estimated as
    \begin{equation*}
        \int_{B_\delta(x)} \frac{1}{(4\pi t)^{n/2}}e^{-||x-y||^2/4t}|f(y)-f(x)| dy \leq \epsilon\int_{B_\delta(x)} \frac{1}{(4\pi t)^{n/2}}e^{-||x-y||^2/4t} dy \leq \epsilon.
    \end{equation*}
    For the second term in \eqref{eq:IJ}, recall that $f$ has compact support (in particular, $f$ is bounded) and write
    \begin{equation*}
        C = \frac{2\max_{x \in \R^n}|f(x)|}{(4\pi)^{n/2}}.
    \end{equation*}
    Then
    \begin{equation*}
        \int_{\R^n \backslash B_\delta(x)} \frac{1}{(4\pi t)^{n/2}}e^{-||x-y||^2/4t}|f(y)-f(x)| dy \leq \frac{C}{t^{n/2}} \int_{\R^n \backslash B_\delta(x)} e^{-||x-y||^2/4t} dy,
    \end{equation*}
    which, after change of variables, can be written as
    \begin{equation*}
        C\int_{\R^n \backslash B_{\delta/\sqrt{t}}(0)} e^{-||y||^2/4} dy.
    \end{equation*}
    This goes to 0 as $t \to 0$. In particular, the second term in \eqref{eq:IJ} will be less than $\epsilon$ for $t$ sufficiently close to 0. Then
    \begin{equation*}
        \left|\int_{\R^n} \frac{1}{(4\pi t)^{n/2}}e^{-||x-y||^2/4t} f(y) dy - f(x) \right| < 2\epsilon,
    \end{equation*}
    which completes the proof of \eqref{eq:heat_ker_ic}.  
    
    To prove uniqueness, suppose $g$ also satisfies \eqref{eq:heat_symm}, \eqref{eq:heat_eq_ker}, and \eqref{eq:heat_ker_ic}. Then we fix $x$ and $t$ and note that 
    \begin{equation*}
        \int_{\R^n} (h-g)(x,y,t)f(y) dy = 0
    \end{equation*}
    for any continuous and compactly supported function $f$. Now let $\epsilon > 0$. Since $h$ and $g$ are in $L^2$, there is an $R > 0$ such that
    \begin{align*}
        &\int_{\R^n \backslash B_R(0)} h(x,y,t)^2 dy < \epsilon, \\
        &\int_{\R^n \backslash B_R(0)} g(x,y,t)^2 dy < \epsilon. 
    \end{align*}
    Then we have
    \begin{align*}
        &\int_{\R^n} (h-g)^2(x,y,t) dy = \int_{B_R(0)} (h-g)^2(x,y,t) dy + \int_{\R^n\backslash B_R(0)} (h-g)^2(x,y,t) dy \\
        \leq &\int_{B_R(0)} (h-g)^2(x,y,t) dy + 2\int_{\R^n\backslash B_R(0)} h(x,y,t)^2 dy + 2\int_{\R^n\backslash B_R(0)} g(x,y,t)^2 dy \\
        < &\int_{B_R(0)} (h-g)^2(x,y,t) dy + 4\epsilon,
    \end{align*}
    where we used the inequality $(a-b)^2 \leq 2a^2 + 2b^2$. Moreover, since the collection of continuous functions is dense in $L^2$ for bounded domains, there is a continuous function $f : \R^n \to \R$ such that $|(h-g)(x,y,t) - f(y)| < \epsilon$ for all $y \in B_R(0)$. We may assume that the support of $f$ is contained in $B_R(0)$. This gives
    \begin{align*}
        &\int_{B_R(0)} (h-g)^2(x,y,t) dy \leq \int_{B_R(0)} (h-g)(x,y,t)f(y) dy + \epsilon\int_{B_R(0)} (h-g)(x,y,t) dy \\
         = &\int_{\R^n} (h-g)(x,y,t)f(y) dy + \epsilon\int_{B_R(0)} (h-g)(x,y,t) dy = \epsilon\int_{B_R(0)} (h-g)(x,y,t) dy.
    \end{align*}
    Since $h$ and $g$ are in $L^2$, it follows from Cauchy-Schwarz' inequality that the last integral is bounded. Thus, there is a constant $M > 0$, independent of $\epsilon > 0$, such that 
    \begin{equation*}
        \int_{\R^n} (h-g)^2(x,y,t) dy < \epsilon M.
    \end{equation*}
    Since $\epsilon > 0$ was arbitrary, it follows that $h$ and $g$ are equal as elements of $L^2$ with respect to the variable $y$, for each fixed $x$ and $t$. Finally, since $x$ and $t$ were arbitrary, we conclude that $h$ and $g$ are the same element of $L^2(\Omega \times \Omega \times \R^+)$. 
    %have the same $L^2$ norm in $x$, $y$, and $t$.
    \end{proof} 
\end{proposition}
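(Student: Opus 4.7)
My plan is to separate the proposition into (i) verifying that the proposed formula $h_{\R^n}$ satisfies the three defining properties \eqref{eq:heat_symm}--\eqref{eq:heat_ker_ic}, and (ii) establishing uniqueness within $L^2(\R^n \times \R^n \times \R^+)$. Property \eqref{eq:heat_symm} is immediate from $||x-y|| = ||y-x||$. For the heat equation \eqref{eq:heat_eq_ker}, I would verify by direct differentiation: writing $r = ||x-y||$, one computes $\partial_t h_{\R^n} = h_{\R^n}\bigl(r^2/(4t^2) - n/(2t)\bigr)$, and two $x$-derivatives of the Gaussian factor produce the same expression (up to the sign convention of $\Delta$), so $(\partial_t - \Delta) h_{\R^n} = 0$; symmetry in $x$ and $y$ then handles the case of $\Delta$ acting in $y$.

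The heart of the existence part is the initial condition \eqref{eq:heat_ker_ic}, an approximation-to-identity statement. First I would compute the normalization $\int_{\R^n} h_{\R^n}(x,y,t)\,dy = 1$ by translating $y \mapsto y + x$ and rescaling $y \mapsto \sqrt{4t}\,y$, reducing to the classical Gaussian integral $\int_{\R^n} e^{-||y||^2}\,dy = \pi^{n/2}$. Then, for fixed $x \in \R^n$, $f \in \mathscr{C}_c^0(\R^n)$, and $\epsilon > 0$, choose $\delta > 0$ with $|f(y) - f(x)| < \epsilon$ whenever $||y-x|| < \delta$ and split
\[
\int_{\R^n} h_{\R^n}(x,y,t)\bigl(f(y) - f(x)\bigr)\,dy = \int_{B_\delta(x)} + \int_{\R^n \setminus B_\delta(x)};
\]
the first integral is bounded by $\epsilon$ via the mass-one property, and the second, after rescaling $y = x + \sqrt{t}\,z$ and using the boundedness of $f$, is bounded by $2||f||_\infty \int_{||z|| \geq \delta/\sqrt{t}} e^{-||z||^2/4}\,dz$, which tends to zero as $t \downarrow 0$ by dominated convergence.

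For uniqueness, suppose $g \in L^2$ also satisfies the three properties and set $k = h_{\R^n} - g$; then, for each fixed $x$, $k(x,\cdot,t)$ solves the heat equation in the $y$ variable with initial behavior that annihilates every $f \in \mathscr{C}_c^0(\R^n)$. The natural route is an $L^2$ energy estimate, since integration by parts gives $\tfrac{d}{dt}||k(x,\cdot,t)||_{L^2}^2 \leq 0$, so the $y$-$L^2$-norm of $k$ is non-increasing and vanishing initial data in $L^2$ then forces $k \equiv 0$. I expect the main obstacle to be precisely this last step: the initial condition is a distributional statement tested only against continuous compactly supported functions, whereas the energy argument requires vanishing in $L^2$. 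A cleaner alternative I would consider is Fourier transform in $y$, which turns the heat equation into the pointwise ODE $\partial_t \hat k(x,\xi,t) = -|\xi|^2 \hat k(x,\xi,t)$; a density/mollification argument shows $\hat k(x,\xi,0) = 0$ for every $\xi$, hence $\hat k \equiv 0$ and so $k \equiv 0$.
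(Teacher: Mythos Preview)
Your existence argument---symmetry by inspection, the heat equation by direct differentiation, and the initial condition via the standard approximation-to-identity split into $B_\delta(x)$ and its complement with a rescaling on the tail---is essentially identical to the paper's proof.

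For uniqueness the two routes diverge. The paper fixes $x$ and $t>0$, asserts that $\int_{\R^n}(h-g)(x,y,t)f(y)\,dy=0$ for every $f\in\mathscr{C}_c^0$, and then runs a density-of-continuous-functions-in-$L^2$ argument (truncate to a large ball, approximate $h-g$ by a continuous $f$, apply Cauchy--Schwarz) to force $h-g=0$ in $L^2$. You instead propose an energy estimate $\tfrac{d}{dt}\|k(x,\cdot,t)\|_{L^2}^2\le 0$, or alternatively Fourier transform in $y$ to reduce to a first-order ODE. Note that the paper's key assertion---vanishing of the integral for \emph{fixed} $t>0$, not just in the limit $t\to 0$---itself presupposes uniqueness of solutions to the heat equation with given initial data (since both $\int h\,f\,dy$ and $\int g\,f\,dy$ solve the heat equation in $x$ with initial value $f$); your energy or Fourier argument is precisely what would justify that step. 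In that sense your route is more self-contained, and you are right to flag the passage from the distributional initial condition to $L^2$ vanishing as the place where care is needed; the Fourier-transform alternative you mention handles this cleanly.
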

If $\Omega = \R_+^n = \{x = (x_1,\dots,x_n) \in \R^n : x_n \geq 0\}$, % \,\,i = 1,\dots,n\}$, 
then it can similarly be shown, using the method of images \cite{gaponenko1969method}, that the heat kernel with Dirichlet boundary conditions at $x_n = 0$ is
\begin{equation} \label{eq:heatkernel_halfspace}
    h_{\R_+^n}(x,y,t) = \frac{e^{-||x-y||^2/4t} - e^{-||x+y||^2/4t}}{(4\pi t)^{n/2}}, \,\, x,y \in \R^n.
\end{equation}
This heat kernel can be used to approximate that of a smoothly bounded domain near its edges, or a polygonal domain near its edges avoiding the corners.  We now proceed to investigate the behavior of the heat trace as $t \downarrow 0$.

\subsubsection{Kac's principle of not feeling the boundary and the leading order term} \label{sec:t^-1rigorous}
Kac used heuristics from physics to compare the heat kernel of a bounded domain with that of $\R^2$ to obtain the first term in the asymptotic expansion of the heat trace. He argued that particles in the interior of the domain should not yet have felt the effect of the boundary if the time is very small, and therefore the heat kernel for the domain should should be asymptotically equal to that of $\R^2$ as the time approaches zero. In this way, the integral of the Euclidean heat kernel along the diagonal $(x=y)$ over the domain should give the leading order asymptotic term in the heat trace. In fact, Kac also made this argument almost fully rigorous using Weyl's law (see Theorem \ref{theorem:Weyl}) in \cite[\S 8]{kac1966can}.  We include here a fully rigorous treatment.

Note that if we insert $\lambda_n$ into \eqref{eq:Weyl}, we get $\lambda_n \sim \frac{4\pi n}{|\Omega|}, \, n \to \infty$. This implies that we, for any given $\epsilon > 0$, can choose an $N \in \mathbb{N}$ such that
\begin{equation} \label{eq:epsN}
    \frac{4\pi n}{|\Omega|}(1-\epsilon) \leq \lambda_n \leq \frac{4\pi n}{|\Omega|}(1+\epsilon)
\end{equation}
for $n \geq N$. Then
\begin{equation} \label{eq:sum_epsN}
    \sum_{n = N}^\infty e^{-4\pi nt(1+\epsilon)/|\Omega|} \leq \sum_{n = N}^\infty e^{-\lambda_n t} \leq \sum_{n = N}^\infty e^{-4\pi nt(1-\epsilon)/|\Omega|}
\end{equation}
for $t > 0$. Summing the geometric series, we obtain
\begin{equation*}
    \sum_{n = N}^\infty e^{-4\pi nt(1\pm\epsilon)/|\Omega|} = \frac{e^{-4\pi t(N-1)(1\pm\epsilon)/|\Omega|}}{e^{4\pi t(1\pm\epsilon)/|\Omega|} - 1}.
\end{equation*}
We have thus shown that
\begin{equation*}
    \sum_{n=1}^{N-1}e^{-\lambda_n t} + \frac{e^{-4\pi t(N-1)(1+\epsilon)/|\Omega|}}{e^{4\pi t(1+\epsilon)/|\Omega|} - 1} \leq \sum_{n=1}^\infty e^{-\lambda_n t} \leq \sum_{n=1}^{N-1} e^{-\lambda_n t} + \frac{e^{-4\pi t(N-1)(1-\epsilon)/|\Omega|}}{e^{4\pi t(1-\epsilon)/|\Omega|} - 1}.
\end{equation*}
In particular,
\begin{align*}
    \begin{split}
        &\liminf_{t \to 0}\left(t\sum_{n=1}^{N-1}e^{-\lambda_n t} + t\frac{e^{-4\pi t(N-1)(1+\epsilon)/|\Omega|}}{e^{4\pi t(1+\epsilon)/|\Omega|} - 1}\right) \leq \liminf_{t\to 0} t\sum_{n=1}^\infty e^{-\lambda_n t} \\ \leq &\limsup_{t \to 0}t\sum_{n=1}^\infty e^{-\lambda_n t} \leq \limsup_{t \to 0} \left(t\sum_{n=1}^{N-1} e^{-\lambda_n t} + t\frac{e^{-4\pi t(N-1)(1-\epsilon)/|\Omega|}}{e^{4\pi t(1-\epsilon)/|\Omega|} - 1}\right),
    \end{split}
\end{align*}
and since
\begin{equation*}
    \lim_{t\to 0} t\frac{e^{-4\pi t(N-1)(1\pm\epsilon)/|\Omega|}}{e^{4\pi t(1\pm\epsilon)/|\Omega|} - 1} = \frac{|\Omega|}{4\pi(1\pm \epsilon)},
\end{equation*}
we get
\begin{equation*}
    \frac{|\Omega|}{4\pi(1+ \epsilon)} \leq \liminf_{t\to 0} t\sum_{n=1}^\infty e^{-\lambda_n t} \leq \limsup_{t\to 0} t\sum_{n=1}^\infty e^{-\lambda_n t} \leq \frac{|\Omega|}{4\pi(1- \epsilon)}.
\end{equation*}
By letting $\epsilon \to 0$, it follows that 
\begin{equation*}
    \lim_{t \to 0} t\sum_{n=1}^\infty e^{-\lambda_n t}
\end{equation*}
exists and equals $|\Omega|/4\pi$, which shows that 
\begin{equation*}
    \sum_{n=1}^\infty e^{-\lambda_n t} \sim \frac{|\Omega|}{4\pi t}, \,\,\,t \to 0.
\end{equation*}
If we, like Kac, would have chosen to include the factor $c^2 = 1/2$ as in \eqref{eq:Shrödinger_rearrange}, then by Weyl's law we would instead have $\lambda_n \sim \frac{2\pi n}{|\Omega|}$, $n \to \infty$, from which a similar calculation as above would have given
\begin{equation*}
    \sum_{n=1}^\infty e^{-\lambda_n t} \sim \frac{|\Omega|}{2\pi t}, \,\,\,t \to 0.
\end{equation*}

\subsubsection{Kac's principle of \textbf{feeling the boundary} and the $t^{-1/2}$-term} \label{sec:locality_principle}
To obtain the second term in the heat trace expansion, Kac turned his attention to polygonal domains.  Later, he would use polygonal domains to approximate smoothly bounded domains and calculate the limits of the terms in the heat trace expansion as the polygonal domains converge in the Hausdorff distance to a smoothly bounded domain.  We expect that he turned to polygonal domains because he had explicit expressions for the heat kernels for half-spaces and circular sectors, which he called wedges.  Kac argued that particles shouldn't be able to distinguish between a neighborhood of a straight edge of a polygonal domain and a neighborhood of a straight edge of a half space, hence the heat kernels should be asymptotically equal.  Similarly, near a vertex, particles should not be able to distinguish between a neighborhood of a vertex in a polygonal domain and a neighborhood of a vertex in a circular sector (wedge), hence the heat kernels should be asymptotically equal.  We call this \textit{Kac's principle of feeling the boundary.}  Although he did not rigorously prove it, his heuristics are correct, and the following formulation can be rigorously proven.  To state this result, for a function $f : (0,\infty) \to \R$, we write $f(t) = \mathcal{O}(t^\infty)$, $t \to 0$, to indicate that $f$ decays faster than any polynomial as $t \to 0$, i.e. for all $N \in \N$,
\begin{equation*}
    \lim_{t \to 0} \frac{f(t)}{t^N} = 0.
\end{equation*}
With this notation, Kac's principle of feeling the boundary says the following.

\begin{theorem}[\cite{van1988heat}, \cite{nursultanov2019hear}, Kac's principle of feeling the boundary] \label{theorem:locality_principle}
    Let $\Omega \subset \R^2$ be a polygonal domain. Let $h_\Omega$ denote the heat kernel for the Laplacian on $\Omega$ with Dirichlet boundary conditions. Then, for $S = S_\theta$, an infinite sector of opening angle $\theta$ and with heat kernel $h_{S(\theta)}$ for Dirichlet boundary conditions, and for any corner of $\Omega$ with opening angle $\theta$, there is a neighborhood $U_\theta \subset \overline \Omega$ such  that 
    \begin{equation*}
        |h_\Omega(x,y,t) - h_{S(\theta)}(x,y,t)| = \mathcal{O}(t^\infty) \text{ uniformly in } (x,y) \in U_\theta \times U_\theta.
    \end{equation*}
    Moreover, for any set $U_e \subset \overline \Omega$ with positive distance to all corners of $\Omega$, we have
    \begin{equation*}
        |h_\Omega(x,y,t) - h_{\R_+^2}(x,y,t)| = \mathcal{O}(t^\infty) \text{ uniformly in } (x,y) \in U_e \times U_e. 
    \end{equation*} 
        We note that both of these estimates hold all the way up to the boundary of $\Omega$. 
\end{theorem}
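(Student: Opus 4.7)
The plan is to use a parametrix construction tailored to each geometric feature, combined with Duhamel's principle, to compare $h_\Omega$ with the appropriate model heat kernel. First I would address the corner case: since $\Omega$ is polygonal, there is a neighborhood $W$ of any corner $v$ of opening angle $\theta$ in which $\Omega \cap W$ coincides with $S(\theta) \cap W$ after placing the apex of $S(\theta)$ at $v$ and rotating appropriately. I fix cutoffs $\phi, \psi \in \mathscr{C}_c^\infty(W)$ with $\psi \equiv 1$ on a neighborhood $V \subset W$ and $\phi \equiv 1$ on $U_\theta \subset V$, arranged so that $\psi \phi = \phi$, and define the parametrix
\[
H(x,y,t) = \psi(x)\, h_{S(\theta)}(x,y,t)\, \phi(y).
\]

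Since $h_{S(\theta)}$ solves the heat equation in $x$, applying $\partial_t - \Delta_x$ to $H$ yields only commutator terms involving $\Delta \psi$ and $\nabla \psi \cdot \nabla_x h_{S(\theta)}$, each multiplied by $\phi(y)$; call this error
\[
E(x,y,t) := (\partial_t - \Delta_x) H(x,y,t).
\]
Moreover $H(x,y,t)$ vanishes for $x \in \partial \Omega$, since on $\mathrm{supp}(\psi) \cap \partial\Omega$ the boundary coincides with $\partial S(\theta)$ (where $h_{S(\theta)}$ is Dirichlet), while elsewhere $\psi = 0$. For $y \in U_\theta$ the initial data reduces to $H(\cdot, y, 0) = \delta_y$ because $\phi(y) = \psi(y) = 1$ there. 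Duhamel's principle for the Dirichlet heat problem on $\Omega$ then gives, for $(x,y) \in U_\theta \times U_\theta$,
\[
h_\Omega(x,y,t) - h_{S(\theta)}(x,y,t) = - \int_0^t \!\! \int_\Omega h_\Omega(x,z,t-s)\, E(z,y,s)\, dz\, ds,
\]
since $\psi \equiv 1$ on $U_\theta$.

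The key estimate is that $E(z,y,s)$ is supported in $\mathrm{supp}(\nabla \psi) \times \mathrm{supp}(\phi)$, two sets separated by a positive distance $\delta > 0$ by construction. I would then invoke standard Gaussian off-diagonal upper bounds on the Dirichlet sector heat kernel, obtained either via domain monotonicity plus comparison with $h_{\R^2}$, or via finite propagation speed for the associated wave equation, to conclude
\[
|h_{S(\theta)}(z,y,s)| + s^{1/2}|\nabla_z h_{S(\theta)}(z,y,s)| \leq C s^{-1} e^{-\delta^2/(8s)}
\]
on this region, whence $|E(z,y,s)| = \mathcal{O}(s^\infty)$ uniformly. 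Since the Dirichlet heat kernel is sub-Markov, so that $\int_\Omega h_\Omega(x,z,\tau)\, dz \leq 1$, the time convolution preserves the $\mathcal{O}(t^\infty)$ estimate, yielding the corner bound. The edge estimate follows the same scheme with $h_{\R_+^2}$ of \eqref{eq:heatkernel_halfspace} replacing $h_{S(\theta)}$, and the explicit Gaussian formula makes the off-diagonal estimate immediate.

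The main obstacle will be the uniform Gaussian upper bound on $h_{S(\theta)}$ all the way up to the singular apex, since no elementary closed form is available for general $\theta$. One must show that the exponential decay constant depends only on the geometric separation between $\mathrm{supp}(\nabla \psi)$ and $\mathrm{supp}(\phi)$, and not on how close these sets lie to the vertex. This is the delicate analytic content carried out in \cite{van1988heat} and \cite{nursultanov2019hear}; once that bound is in hand, the parametrix-plus-Duhamel scheme closes the argument in both the corner and edge cases, and the estimate extends uniformly up to $\partial\Omega$ because the cutoff construction and the Gaussian bound remain valid there.
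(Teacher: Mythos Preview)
The paper does not supply its own proof of this theorem; it is quoted from \cite{van1988heat} and \cite{nursultanov2019hear} and then used as a black box in the heat-trace computation. So there is nothing in the paper to compare your argument against directly.

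That said, your proposal is a correct outline and is precisely the strategy those references implement: localize with nested cutoffs, build a parametrix from the model heat kernel, apply Duhamel's formula to represent the difference as a space-time convolution against a commutator error supported where $|z-y|\geq\delta>0$, and kill that error with off-diagonal Gaussian bounds. Your identification of the genuine difficulty --- obtaining a uniform Gaussian upper bound for $h_{S(\theta)}$ and its gradient up to the apex, with constants depending only on the separation $\delta$ --- is exactly right; this is handled in \cite{van1988heat} by probabilistic (Brownian-motion) comparison and in \cite{nursultanov2019hear} by a more microlocal construction. One small refinement: for the edge case you should also arrange the cutoffs so that $\mathrm{supp}(\nabla\psi)$ stays a positive distance from \emph{all} corners, not just from $\mathrm{supp}(\phi)$, since otherwise the model $h_{\R_+^2}$ does not solve the correct boundary problem on $\mathrm{supp}(\psi)$. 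With that caveat, your sketch is sound.
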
 
This shows that to obtain the asymptotic expansion of the heat trace in the polygon $\Omega$, it is enough to replace the actual heat kernel $h_\Omega$ with either $h_{S(\theta)}$ if we are close to a corner with angle $\theta$, or with $h_{\R_+^2}$ otherwise. This is indeed what Kac did, so in essence he was correct, but he did not provide a full rigorous proof.  

We have the expression for $h_{\R_+ ^2}$, thus to continue we require the heat kernel $h_{S(\theta)}$. Kac's expression for the heat kernel on an infinite sector appears with a reference to Carslaw \cite{carslaw1910green} and no further details.  It requires some effort to obtain Kac's expression.  These missing details have been computed in \cite[\S6]{aldana2018polyakov}; an even more pedagogical presentation is contained in \cite[p. 40-44]{maardby2023mathematics}. In polar coordinates $(r, \phi)$ for a circular sector of opening angle $\theta$ and infinite radius, as shown in Figure \ref{fig:infinite_sector}, the heat kernel restricted to the diagonal is for $0 < \phi < \theta - \pi/2$ 
\begin{figure}
\centering
\begin{tikzpicture}
    %\draw (0,0) grid (10,10);
    \draw[-] (3,3) -- (8,3); 
    \draw[-] (3,3) -- (0,8);
    \draw[-] [dashed] (3,3) -- (5,6) node[midway, above = 3pt, left = 1pt]{$r$};
    \draw (3,3) circle[radius=2pt]; and \fill (3,3) node[left = 4pt, above = -16pt]{$O$} circle[radius=2pt];
    \draw (3,3) circle[radius=2pt]; and \fill (3,3) node[right = 15pt, above = 9pt]{$\theta$} circle[radius=2pt];
    \draw (3.85,3) circle[radius=0pt]; and \fill (3.85,3) node[right = 0pt, above = 5pt]{$\phi$} circle[radius=0pt];
    \draw (5,6) circle[radius=2pt]; and \fill (5,6) node[right = 5pt, above = 4pt]{$(r,\phi)$} circle[radius=2pt];
    \draw (3.5,3) arc (10:115:0.5); % arcs
    \draw (3.8,3) arc (10:60:0.85); 
\end{tikzpicture}
\caption{An infinite sector with opening angle $\theta \in (\pi/2, \pi)$.}
\label{fig:infinite_sector}
\end{figure}
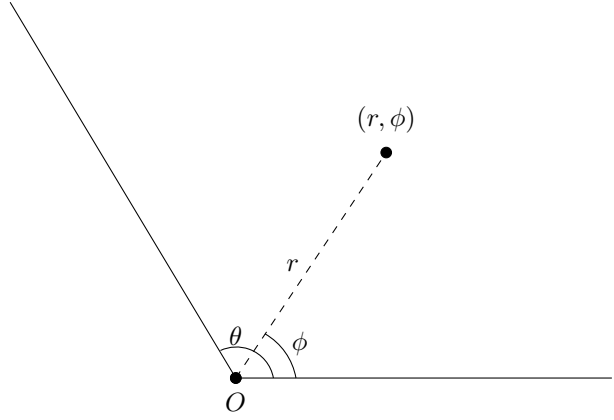
\begin{equation} \label{eq:heatkernel_sector1}
    \begin{split}
        h_{S(\theta)}(r,\phi,r,\phi,&t) = \frac{1}{4\pi t} - \frac{e^{-r^2(1-\cos(2\phi))/2t}}{4\pi t} \\
        &-\sin\left(\frac{\pi^2}{\theta}\right) \frac{e^{-r^2/2t}}{8\pi\theta t} \int_{-\infty}^\infty \frac{e^{-r^2\cosh(s)/2t}}{\cosh(\pi s/\theta) - \cos(\pi^2/\theta)} ds \\
        &+\sin\left(\frac{\pi^2}{\theta}\right) \frac{e^{-r^2/2t}}{8\pi\theta t} \int_{-\infty}^\infty \frac{e^{-r^2\cosh(s)/2t}}{\cosh(\pi s/\theta + 2\pi i\phi/\theta) - \cos(\pi^2/\theta)} ds.
    \end{split}
\end{equation}
For $\theta - \pi/2 < \phi < \pi/2$, 
\begin{equation} \label{eq:heatkernel_sector2}
    \begin{split}
        h_{S(\theta)}(r,\phi,r,\phi,&t) = \frac{1}{4\pi t} - \frac{e^{-r^2(1-\cos(2\phi))/2t}}{4\pi t} - \frac{e^{-r^2(1-\cos(2(\theta-\phi))/2t}}{4\pi t} \\
        &-\sin\left(\frac{\pi^2}{\theta}\right) \frac{e^{-r^2/2t}}{8\pi\theta t} \int_{-\infty}^\infty \frac{e^{-r^2\cosh(s)/2t}}{\cosh(\pi s/\theta) - \cos(\pi^2/\theta)} ds \\
        &+\sin\left(\frac{\pi^2}{\theta}\right) \frac{e^{-r^2/2t}}{8\pi\theta t} \int_{-\infty}^\infty \frac{e^{-r^2\cosh(s)/2t}}{\cosh(\pi s/\theta + 2\pi i\phi/\theta) - \cos(\pi^2/\theta)} ds,
    \end{split}
\end{equation}
and for $\pi/2 < \phi < \theta$,
\begin{equation} \label{eq:heatkernel_sector3}
    \begin{split}
        h_{S(\theta)}(r,\phi,r,\phi,&t) = \frac{1}{4\pi t} - \frac{e^{-r^2(1-\cos(2(\theta-\phi))/2t}}{4\pi t} \\
        &-\sin\left(\frac{\pi^2}{\theta}\right) \frac{e^{-r^2/2t}}{8\pi\theta t} \int_{-\infty}^\infty \frac{e^{-r^2\cosh(s)/2t}}{\cosh(\pi s/\theta) - \cos(\pi^2/\theta)} ds \\
        &+\sin\left(\frac{\pi^2}{\theta}\right) \frac{e^{-r^2/2t}}{8\pi\theta t} \int_{-\infty}^\infty \frac{e^{-r^2\cosh(s)/2t}}{\cosh(\pi s/\theta + 2\pi i\phi/\theta) - \cos(\pi^2/\theta)} ds.
    \end{split}
\end{equation}
With our explicit heat kernel expressions \eqref{eq:heatkernel_halfspace}, and \eqref{eq:heatkernel_sector1}, \eqref{eq:heatkernel_sector2}, \eqref{eq:heatkernel_sector3}, we proceed with our heat trace calculations.  
Suppose $\Omega$ has $N$ vertices $V_1,\dots,V_N$ 
with angles $\theta_1,\dots,\theta_N$. For each $i = 1,\dots,N$, choose $U_{\theta_i}$ according to Theorem \ref{theorem:locality_principle}, and choose $R_i > 0$ such that the sector $S(R_i, \theta_i)$ with radius $R_i$ and angle $\theta_i$ is contained in $U_{\theta_i}$. Then each $S(R_i, \theta_i)$ is contained in the infinite sector $S(\theta_i)$. Assuming, without loss of generality, that the sectors $S(R_i, \theta_i)$ are disjoint, the heat trace becomes
\begin{equation*}
    \int_\Omega h_\Omega(x,x,t) dx \sim \sum_{i = 1}^N \int_{S(R_i, \theta_i)} h_{S(\theta_i)}(x,x,t) dx + \int_{\Omega \backslash \bigcup_{i=1}^N S(R_i, \theta_i)} h_{\R_+^2}(x,x,t) dx, 
\end{equation*}
as $t \downarrow 0$.  
To compute $\int_{S(R_i, \theta_i)} h_{S(\theta_i)}(x,x,t) dx$, we note that since $S(R_i, \theta_i)$ has finite radius $R_i$, 
\begin{align*}
    &\int_{S(R_i, \theta_i)} h_{S(\theta_i)}(x,x,t) dx = \int_0^{\theta_i}\int_0^{R_i} h_{S(\theta_i)}(r, \phi, r, \phi, t) rdrd\phi \\
     = &\int_0^{\theta_i}\int_0^{R_i} \frac{1}{4\pi t} rdrd\phi  -  \int_0 ^{\pi/2}\int_0 ^{R_i}  \frac{e^{-r^2(1-\cos(2\phi))/2t}}{4\pi t} rdr d\phi \\ 
     & - \int_{\theta_i-\pi/2} ^{\theta_i} \int_0 ^{R_i} \frac{e^{-r^2(1-\cos(2(\theta_i-\phi))/2t}}{4\pi t} r dr d\phi \\
        - &\int_0^{\theta_i}\int_0^{R_i} \sin\left(\frac{\pi^2}{\theta_i}\right) \frac{e^{-r^2/2t}}{8\pi\theta_i t} \int_{-\infty}^\infty \frac{e^{-r^2\cosh(s)/2t}}{\cosh(\pi s/\theta_i) - \cos(\pi^2/\theta_i)} rds drd\phi \\
        + &\int_0^{\theta_i}\int_0^{R_i} \sin\left(\frac{\pi^2}{\theta_i}\right) \frac{e^{-r^2/2t}}{8\pi\theta_i t} \int_{-\infty}^\infty \frac{e^{-r^2\cosh(s)/2t}}{\cosh(\pi s/\theta_i + 2\pi i\phi/\theta_i) - \cos(\pi^2/\theta_i)} rds drd\phi.
\end{align*}

Making a change of variables we calculate 
\begin{align*} &-  \int_0 ^{\pi/2}\int_0 ^{R_i}  \frac{e^{-r^2(1-\cos(2\phi))/2t}}{4\pi t} rdr d\phi  - \int_{\theta_i-\pi/2} ^{\theta_i} \int_0 ^{R_i} \frac{e^{-r^2(1-\cos(2(\theta_i-\phi))/2t}}{4\pi t} r dr d\phi \\ &= - \int_0 ^{\pi/2} \int_0 ^{R_i} \frac{e^{-r^2(1-\cos(2\phi))/2t}}{2\pi t} r dr d\phi. 
\end{align*}

To simplify this, we use the result from \cite[10.32.1]{dlmf} together with a change of variables 
\[ I_0 (z) = \frac 1 \pi \int_0 ^\pi e^{\pm z \cos(\theta)} d\theta = \frac 2 \pi \int_0 ^{\pi/2} e^{\pm z \cos(2\phi)} d\phi. \]
    Here $I_0$ is the modified Bessel function of order 0:
    \begin{equation*}
        I_n(x) = \left(\frac{x}{2}\right)^n\sum_{k = 0}^\infty \frac{(\frac{1}{4}x^2)^k}{k!\Gamma(n+k+1)}, \,\, x \geq 0, \,\, n \geq 0.
    \end{equation*}
%\end{lemma}
We use this to calculate 
\[ - \int_0 ^{\pi/2}  \frac{e^{-r^2(1-\cos(2\phi))/2t}}{2\pi t}  d\phi = - \frac{e^{-r^2/(2t)} I_0 (r^2/(2t))}{4t}\]
and therewith 
\[ - \int_0 ^{R_i} \frac{e^{-r^2/(2t)} I_0 (r^2/(2t))}{4t} r dr = - \frac 1 4 \int_0^{R_i^2/2t} e^{-u}I_0(u) du. \] 
Using identities contained in \cite{watson1922treatise} we calculate that 
\begin{equation*}
    \frac{d}{du} \bigg(ue^{-u}\big(I_0(u) + I_1(u)\big)\bigg) = e^{-u}I_0(u)
\end{equation*}
thus 
\begin{equation*}
- \frac 1 4 \int_0^{R_i^2/2t} e^{-u}I_0(u) du = 
    - \frac{1}{8t} R_i^2e^{-R_i^2/2t} \big(I_0(R_i^2/2t) + I_1(R_i^2/2t)\big).
\end{equation*}
The area of the sector $|S(R_i, \theta_i)| = \frac{R_i^2 \theta_i}{2}$, so we therefore obtain 
\begin{align*}
    &\int_{S(R_i, \theta_i)} h_{S(\theta_i)}(x,x,t) dx = \int_0^{\theta_i}\int_0^{R_i} h_{S(\theta_i)}(r, \phi, r, \phi, t) rdrd\phi \\
     = & \frac{|S(R_i, \theta_i)|}{4\pi t}  - \frac{1}{8t} R_i^2e^{-R_i^2/2t} \big(I_0(R_i^2/2t) + I_1(R_i^2/2t)\big) \\ 
        - &\int_0^{\theta_i}\int_0^{R_i} \sin\left(\frac{\pi^2}{\theta_i}\right) \frac{e^{-r^2/2t}}{8\pi\theta_i t} \int_{-\infty}^\infty \frac{e^{-r^2\cosh(s)/2t}}{\cosh(\pi s/\theta_i) - \cos(\pi^2/\theta_i)} rds drd\phi \\
        + &\int_0^{\theta_i}\int_0^{R_i} \sin\left(\frac{\pi^2}{\theta_i}\right) \frac{e^{-r^2/2t}}{8\pi\theta_i t} \int_{-\infty}^\infty \frac{e^{-r^2\cosh(s)/2t}}{\cosh(\pi s/\theta_i + 2\pi i\phi/\theta_i) - \cos(\pi^2/\theta_i)} rds drd\phi.
\end{align*}

While the second integral
\begin{equation*}
    \int_0^{\theta_i}\int_0^{R_i} \sin\left(\frac{\pi^2}{\theta_i}\right) \frac{e^{-r^2/2t}}{8\pi\theta_i t} \int_{-\infty}^\infty \frac{e^{-r^2\cosh(s)/2t}}{\cosh(\pi s/\theta_i + 2\pi i\phi/\theta_i) - \cos(\pi^2/\theta_i)} rds drd\phi = 0,
\end{equation*}
the first integral becomes
\begin{align*}
    &\int_0^{\theta_i}\int_0^{R_i} -\sin\left(\frac{\pi^2}{\theta_i}\right) \frac{e^{-r^2/2t}}{8\pi\theta_it} \int_{-\infty}^\infty \frac{re^{-r^2\cosh(s)/2t}}{\cosh(\pi s/\theta_i) - \cos(\pi^2/\theta_i)} ds drd\phi \\
    &= -\frac{1}{8\pi}\sin\left(\frac{\pi^2}{\theta_i}\right) \int_{-\infty}^\infty \frac{1 - e^{-R_i^2(1+\cosh(s))/2t}}{(1+\cosh(s))(\cosh(\pi s/\theta_i) - \cos(\pi^2/\theta_i))} ds.
\end{align*}
However, since $e^{-R_i^2(1+\cosh(s))/2t} \leq e^{-R_i^2/t} = \mathcal{O}(t^\infty), \,\, t \to 0$, it follows that
\begin{equation*}
    \int_{-\infty}^\infty \frac{e^{-R_i^2(1+\cosh(s))/2t}}{(1+\cosh(s))(\cosh(\pi s/\theta_i) - \cos(\pi^2/\theta_i))} ds  = \mathcal{O}(t^\infty), \,\, t \to 0.
\end{equation*}
This gives  
\begin{align*}
    &\int_0^{\theta_i}\int_0^{R_i} -\sin\left(\frac{\pi^2}{\theta_i}\right) \frac{e^{-r^2/2t}}{8\pi\theta_it} \int_{-\infty}^\infty \frac{re^{-r^2\cosh(s)/2t}}{\cosh(\pi s/\theta_i) - \cos(\pi^2/\theta_i)} ds drd\phi \\
    &\sim -\frac{1}{8\pi} \sin\left(\frac{\pi^2}{\theta_i}\right) \int_{-\infty}^\infty \frac{1}{(1+\cosh(s))(\cosh(\pi s/\theta_i) - \cos(\pi^2/\theta_i))} ds, \,\, t \to 0.
\end{align*}
Now we use that 
\begin{align*}
    &I_0(x) \sim \frac{e^x}{\sqrt{2\pi x}}, \,\, x \to \infty, \\
    &I_1(x) \sim \frac{e^x}{\sqrt{2\pi x}}, \,\, x \to \infty
\end{align*}
(see \cite{watson1922treatise}). In particular,
\begin{align*}
    I_0(R_i^2/2t) \sim \frac{1}{R_i}\sqrt{\frac{t}{\pi}} e^{R_i^2/2t}, \,\, t \to 0, \\
    I_1(R_i^2/2t) \sim \frac{1}{R_i}\sqrt{\frac{t}{\pi}} e^{R_i^2/2t}, \,\, t \to 0.
\end{align*}
This gives that the total contribution from the sectors becomes asymptotically equal to
\begin{align*}
    &\frac{1}{4\pi t} \sum_{i = 1}^N |S(R_i, \theta_i)| - \frac{1}{4\sqrt{\pi t}}\sum_{i=1}^N R_i \\
    -&\frac{1}{8\pi} \sum_{i = 1}^N \sin\left(\frac{\pi^2}{\theta_i}\right) \int_{-\infty}^\infty \frac{1}{(1+\cosh(s))(\cosh(\pi s/\theta_i) - \cos(\pi^2/\theta_i))} ds, \,\, t \to 0.
\end{align*}

\begin{figure}[hbt!] 
\centering
\begin{tikzpicture}
    %\draw (0,0) grid (10,10);
    \draw[-] (2,2) -- (8,2) node[midway, above=-16pt]{};
    \draw[-] (8,2) -- (9.4,4.8);
    \draw[-] (2,2) -- (0.6,4.8);
    \draw[-] (2.3,3.45) -- (7.07,3.45);
    \draw[-] (4,2) -- (4,3.45);
    \draw[-] (3.2,2.7) -- (6.45,2.7);
    \draw[-] (5.5,2) -- (5.5,2.7);
    \draw (2,2) circle[radius=2pt]; and \fill (2,2) node[left = 4pt, below = 2pt]{} circle[radius=2pt];
    \draw (2,2) circle[radius=0pt]; and \fill (1.7,2.1) node[left = 2pt, below = 2pt]{$V_{i-1}$} circle[radius=0pt];
    %\draw (2,2) circle[radius=2pt]; and \fill (2,2) node[left = 4pt, above = 6pt]{$S(R_{i-1},\theta_{i-1})$} circle[radius=2pt];
    \draw (8,2) circle[radius=2pt]; and \fill (8,2) node[above = -16pt]{} circle[radius=2pt];
    \draw (8.5,1.7) circle[radius=0pt]; and \fill (8.3,1.95) node[above = -16pt]{$V_i$} circle[radius=0pt];
    \draw (3.5,1.7) circle[radius=0pt]; and \fill (3.5,1.7) node[below = 8pt, left = 5pt]{$R_{i-1}$} circle[radius=0pt];
    \draw (6.35,2) circle[radius=2pt]; and \fill (6.35,1.7) node[below = 8pt, right = 14pt]{$R_i$} circle[radius=0pt];
    \draw (3.5,2) circle[radius=2pt]; and \fill (3.5,2) node[above = 30pt, right = 2pt]{} circle[radius=2pt];
    \draw (3.4,3) circle[radius=0pt]; and \fill (3.4,3) node[above = 30pt, right = 2pt]{$\epsilon_i'$} circle[radius=0pt];
    \draw (6.35,2) circle[radius=0pt]; and \fill (6.35,2.35) node[above = 9pt, left = 22pt]{$\delta$} circle[radius=0pt];
    \draw (6.35,2) circle[radius=0pt]; and \fill (6.35,2) node[above = 9pt, left = 22pt]{} circle[radius=2pt];
    \draw (6.4,3) circle[radius=0pt]; and \fill (6.4,3) node[above = 86pt, left = 24pt]{$|E_i(\delta)|$} circle[radius=0pt];
    \draw (6.4,1.5) circle[radius=0pt]; and \fill (6.4,1.7) node[above = 86pt, left = 24pt]{$|E_i(0)|$} circle[radius=0pt];

    \draw (3.5,2) arc (12:99.5:2); % arcs
    \draw (8.75,3.5) arc (60:184:1.6); 
    
\end{tikzpicture}
\caption{The polygon is split up into sectors which are close to the vertices, and regions close to the boundary but away from the vertices.}
\label{fig:locality_principle}
\end{figure}
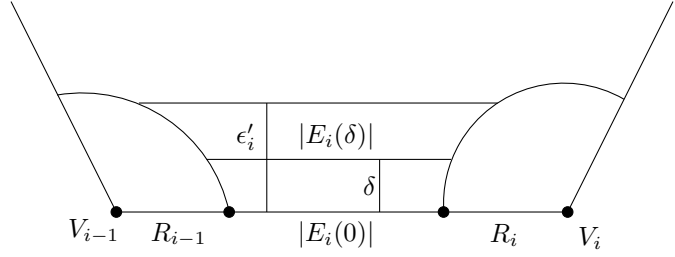
Next, let $E_i$ be an open set containing the part of $\partial\Omega$ between $V_{i-1}$ and $V_i$ (here $V_0 := V_N$) which does not intersect $S(R_i, \theta_i)$, so that it has some positive distance to $V_{i-1}$ and $V_i$. By translating the heat kernel $h_{\R_+^2}$, we can write its contribution to the heat trace at $E_i$ as
\begin{equation*}
    \iint_{E_i} h_{\R_+^2}(x,x,t) dx = \int_0^{\epsilon_i'} \frac{1 - e^{-\delta^2/t}}{4\pi t} |E_i(\delta)| d\delta
\end{equation*}
for some $\epsilon_i' > 0$, where $|E_i(\delta)|$ is the length of the line segment which is parallel and has distance $\delta$ to the edge connecting $V_{i-1}$ and $V_i$ (see Figure \ref{fig:locality_principle}). If $|E_i(0)| := \lim_{\delta \to 0} |E_i(\delta)|$, then we let $\epsilon_i > 0$ be arbitrary and choose $\epsilon_i' > 0$ so that
\begin{equation*}
    \big||E_i(\delta)| - |E_i(0)|\big| < \epsilon_i
\end{equation*}
for $0 < \delta < \epsilon_i'$. This is possible because the endpoints of $E_i(\delta)$ are determined by $S(R_{i-1},\theta_{i-1})$ and $S(R_i,\theta_i)$, which have smooth boundary. Then
\begin{equation*}
    (|E_i(0)| - \epsilon_i)\int_0^{\epsilon_i'} \frac{1 - e^{-\delta^2/t}}{4\pi t} d\delta \leq \int_0^{\epsilon_i'} \frac{1 - e^{-\delta^2/t}}{4\pi t} |E_i(\delta)| d\delta \leq (|E_i(0)| + \epsilon_i)\int_0^{\epsilon_i'} \frac{1 - e^{-\delta^2/t}}{4\pi t} d\delta.
\end{equation*}
Since
\begin{equation*}
    (|E_i(0)| \pm \epsilon_i)\int_0^{\epsilon_i'} \frac{1 - e^{-\delta^2/t}}{4\pi t} d\delta = \frac{\epsilon_i'(|E_i(0)| \pm \epsilon_i)}{4\pi t} - \frac{|E_i(0)| \pm \epsilon_i}{8\sqrt{\pi t}} + \frac{|E_i(0)| \pm \epsilon_i}{8\sqrt{\pi t}}\text{erfc}\left(\frac{\epsilon_i'}{\sqrt{t}}\right),
\end{equation*}
where 
\begin{equation*}
    \text{erfc}\left(\frac{\epsilon_i'}{\sqrt{t}}\right) = \frac{2}{\sqrt{\pi}} \int_{\epsilon_i'/t}^\infty e^{-x^2} dx = \mathcal{O}(t^\infty), \,\, t \to 0, %\mathcal{O}(e^{-(\epsilon')^2/t}),
\end{equation*}
we get
\begin{align*}
    \frac{\epsilon_i'(|E_i(0)| - \epsilon_i)}{4\pi t} - \frac{|E_i(0)| - \epsilon_i}{8\sqrt{\pi t}} + \mathcal{O}(t^\infty) &\leq \int_0^{\epsilon_i'} \frac{1 - e^{-\delta^2/t}}{4\pi t} |E_i(\delta)| d\delta \\
    &\leq \frac{\epsilon_i'(|E_i(0)| + \epsilon_i)}{4\pi t} - \frac{|E_i(0)| + \epsilon_i}{8\sqrt{\pi t}} + \mathcal{O}(t^\infty).
\end{align*}
By choosing $\epsilon_i$ arbitrarily small, it follows that 
the contribution to the heat trace at $E_i$ becomes asymptotically equal to
\begin{equation*}
    \frac{\epsilon_i'|E_i(0)|}{4\pi t}-\frac{|E_i(0)|}{8\sqrt{\pi t}}, \,\, t \to 0.
\end{equation*}
The total contribution from $E_1,\dots,E_N$ is then
\begin{equation*}
    \sum_{i = 1}^N \left(\frac{\epsilon_i'|E_i(0)|}{4\pi t}-\frac{|E_i(0)|}{8\sqrt{\pi t}}\right), \,\, t \to 0.
\end{equation*}
For the remaining part of $\Omega$, which we denote by $U$, we have
\begin{equation*}
    \iint_U h_{\R_+^2}(x,x,t) = \frac{|U|}{4\pi t} - \frac{1}{4\pi t}\iint_U e^{-||x||^2/t} dx.
\end{equation*}
By construction of $E_i$, it is clear that every point in $U$ has distance at least
\begin{equation*}
    \min\{\min_{1 \leq i \leq N} \epsilon_i', \min_{1 \leq i \leq N} R_i\} > 0
\end{equation*}
to $\partial\Omega$, from which it immediately follows that
\begin{equation*}
    \iint_U e^{-||x||^2/t} dx = \mathcal{O}(t^\infty), \,\, t \to 0.
\end{equation*}
Thus, the contribution from $U$, up to asymptotic equality, is simply \begin{equation*}
    \frac{|U|}{4\pi t}.
\end{equation*}
Adding everything up, we get for our polygonal domain $\Omega$ that
\begin{align*}
    \sum_{n = 1}^\infty e^{-\lambda_n t} \sim &\frac{1}{4\pi t}\sum_{i=1}^N |S(R_i, \theta_i)| - \frac{1}{4\sqrt{\pi t}}\sum_{i=1}^N R_i \\
    -&\frac{1}{8\pi} \sum_{i = 1}^N \sin(\frac{\pi^2}{\theta_i}) \int_{-\infty}^\infty \frac{1}{(1+\cosh(s))(\cosh(\pi s/\theta_i) - \cos(\pi^2/\theta_i))} ds \\
    +&\sum_{i = 1}^N \left(\frac{\epsilon_i'|E_i(0)|}{4\pi t} - \frac{|E_i(0)|}{8\sqrt{\pi t}}\right) + \frac{|U|}{4\pi t}, \,\, t \to 0.
\end{align*}
Since
\begin{align*}
    &\sum_{i=1}^\infty \left(|S(R_i, \theta_i)| + \epsilon_i'|E_i(0)| \right) + |U| = |\Omega|, \\
    &\sum_{i=1}^N(2R_i + |E_i(0)|) = |\partial\Omega|,
\end{align*}
this simplifies to
\begin{align}
    \sum_{n = 1}^\infty e^{-\lambda_n t} &\sim \frac{|\Omega|}{4\pi t} - \frac{|\partial\Omega|}{8\sqrt{\pi t}} \nn \\
    &-\frac{1}{8\pi} \sum_{i = 1}^N \sin\left(\frac{\pi^2}{\theta_i}\right) \int_{-\infty}^\infty \frac{1}{(1+\cosh(s))(\cosh(\pi s/\theta_i) - \cos(\pi^2/\theta_i))} ds, \,\, t \to 0. \label{eq:hard_integral}
\end{align}
In particular, we have
\begin{equation*}
    \sum_{n = 1}^\infty e^{-\lambda_n t} \sim \frac{|\Omega|}{4\pi t} - \frac{|\partial\Omega|}{8\sqrt{\pi t}} + a_0(\Omega),
\end{equation*}
for some $a_0(\Omega)$ independent of $t$. We have therefore obtained the first two terms in the asymptotic heat trace expansion. It remains to compute the third term, $a_0 (\Omega)$.

For readers comparing these calculations to Kac's \cite{kac1966can}, we note that if we used $c^2 = 1/2$ in the Laplace eigenvalue equation \eqref{eq:Shrödinger_rearrange} like Kac, we would instead have obtained
\begin{equation*}
    \sum_{n = 1}^\infty e^{-\lambda_n t} \sim \frac{|\Omega|}{2\pi t} - \frac{|\partial\Omega|}{4\sqrt{2\pi t}} + a_0(\Omega).
\end{equation*}
The reason for this is that the heat kernel over the sector $S(\theta)$ would change by some factors of 2, %\S\ref{sec:preparations}, 
and the heat kernel over the half space would in that case also chanbe by a factor of $2$ to be 
\begin{equation*}
    h_{\R_+^2}(x,y,t) = \frac{e^{-||x-y||^2/2t} - e^{-||x+y||^2/2t}}{2\pi t}, \,\, x,y \in \R_+^2.
\end{equation*}

\subsubsection{Kac's holes and how to fill them}
\label{sec:microlocal}
Kac did not compute the integral in \eqref{eq:hard_integral} for arbitrary values of the interior angles $\theta_i$.  Instead, he explained that to approximate a smoothly bounded domain, one would let the interior angles tend to $\pi$, and the number of these $N \to \infty$.  Without any further explanation, he wrote that the limit of \eqref{eq:hard_integral} under this process equals $\frac 1 6$.  He then showed how one could generalize the argument to a smoothly bounded domain with $h$ holes, in which case the coefficient becomes $\frac{1-h}{6}$.  So, heuristically, Kac could hear the number of holes.  

Since the calculation of the limit of \eqref{eq:hard_integral} as $\theta_i \to \pi$ and $N \to \infty$ is not entirely obvious, we explain the missing details in this calculation.  We note that for several reasons it is not at all apparent that this limiting process is correct, because it involves one of the cardinal sins of calculus: exchanging limits that do not necessarily commute.  We will show below, however, that one \em can \em make a fully rigorous argument and obtain the same final result.  In this way we shall fill the holes in Kac's heuristic proof.

We first calculate the integral in \eqref{eq:hard_integral} for $\theta_i = \pi$, 
\[ \int_{-\infty} ^\infty \frac{1}{(1+\cosh(s))^2} ds. \]
To do this we make a substitution and use identities for hyperbolic trigonometric functions: 
\[ u = \tanh(s/2) = \frac{\sinh(s)}{\cosh(s)+1}, \quad 2 du \cosh^2(s/2) = ds,\] 
\[ \cosh^2(s/2) = \frac{\cosh(s)+1}{2}, \quad \sinh(s) = \frac{2u}{1-u^2}, \quad \cosh(s) = \frac{u^2+1}{1-u^2}.\] 
Consequently we obtain that 
\beq \int_{-\infty} ^\infty \frac{1}{(1+\cosh(s))^2} ds = \int_{-1} ^1 \frac{\frac{u^2+1}{1-u^2}+1}{(1+\frac{u^2+1}{1-u^2})^2}  du  = \left . \frac 1 2 \left( u - \frac{u^3}{3} \right) \right|_{u=-1} ^1 = \frac 2 3. \label{eq:integral_done} \eeq 
Moreover, since the hyperbolic cosine is even, we have 
\[ \int_{-\infty}^\infty \frac{1}{(1+\cosh(s))(\cosh(\pi s/\theta_i) - \cos(\pi^2/\theta_i))} ds \] 
\[= 2 \int_0 ^\infty  \frac{1}{(1+\cosh(s))(\cosh(\pi s/\theta_i) - \cos(\pi^2/\theta_i))} ds.\]
For any value of $\theta_i \in (3\pi/4, \pi)$ the integrand 
\[ \frac{1}{(1+\cosh(s))(\cosh(s\pi/\theta_i) - \cos(\pi^2/\theta_i))} \leq \frac{1}{(1+\cosh(s))(\cosh(4s/3) - 1/2)} \forall s \geq 0.\]
Since 
\[ \int_0 ^\infty \frac{1}{(1+\cosh(s))(\cosh(4s/3) - 1/2)}ds \approx 1.1,\]
the dominated convergence theorem shows that 
\[ \lim_{\theta_i \to \pi} \int_{-\infty}^\infty \frac{1}{(1+\cosh(s))(\cosh(\pi s/\theta_i) - \cos(\pi^2/\theta_i))} ds = \int_{-\infty} ^\infty \frac{1}{(1+\cosh(s))^2}ds = \frac 2 3. \]

Next, although it is natural to expect that the angles $\theta_i$ must all tend to $\pi$ as the polygonal domain approximates a smoothly bounded domain, this need not be true in general.  However, for \em convex \em polygonal domains that converge in Hausdorff distance to a non-empty bounded convex domain with smooth boundary, we prove that the interior angles of the polygonal domains all converge to $\pi$. 

\begin{lemma} \label{lemma:angles_go_to_pi}
Let $\{\Omega_k\}_{k=1}^\infty$ be a sequence of $N_k$-sided convex polygonal domains with interior angles $\theta_{k,j}$, $k \geq 1$, $1 \leq j \leq N_k$. Assume that $\overline{\Omega_k} \to \overline{\Omega}$ in Hausdorff distance as $k \to \infty$, and that  
     $\Omega$ is a non-empty bounded convex domain with smooth boundary. Then 
    \begin{equation*}
        \lim_{\substack{k \to \infty \\ 1 \leq j \leq N_k}} \theta_{k,j} = \pi.
    \end{equation*}
\end{lemma}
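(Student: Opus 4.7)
The plan is to argue by contradiction, exploiting the fact that a smooth boundary point of a convex body admits a unique support line, whereas a vertex of a convex polygon with interior angle bounded away from $\pi$ admits a proper cone of support lines. Since interior angles are invariant under isometries and the distance $D$ is taken up to isometries, I would first replace each $\Omega_k$ by a suitable isometric copy so that $d_H(\overline{\Omega_k}, \overline{\Omega}) \to 0$ in the ordinary Hausdorff sense. If the conclusion fails, then along some subsequence (not relabeled) there exist vertices $V_k := V_{k, j_k} \in \partial \Omega_k$ with interior angles $\theta_{k, j_k} \to \theta_0 \in [0, \pi)$. By boundedness I pass to a further subsequence so that $V_k \to p$, and a short argument using the inclusion of small open balls in $\Omega_k$ for large $k$ (a standard property of Hausdorff convergence of convex bodies with non-empty interior) rules out $p \in \mathrm{int}(\Omega)$, so $p \in \partial \Omega$.

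Next, I would exploit convexity. The two edges of $\Omega_k$ meeting at $V_k$ lie in support lines $L_j^k$ of $\Omega_k$ through $V_k$, $j=1,2$, with inward unit normals $n_j^k$. The angle between $L_1^k$ and $L_2^k$ equals the exterior angle $\pi - \theta_{k, j_k}$, which is bounded below by $\pi - \theta_0 + o(1) > 0$. After one more extraction, $n_j^k \to n_j$ with $n_1 \neq n_2$, and the limiting lines $L_j = \{x : \langle n_j, x - p \rangle = 0\}$ both pass through $p$. Using $\Omega_k \subset \{x : \langle n_j^k, x - V_k \rangle \geq 0\}$ and approximating each $x \in \Omega$ by some $x_k \in \Omega_k$ with $x_k \to x$ (guaranteed by Hausdorff convergence), I would pass to the limit to obtain $\Omega \subset \{x : \langle n_j, x - p \rangle \geq 0\}$, so that both $L_1$ and $L_2$ are support lines of $\Omega$ at $p$. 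Since $\partial \Omega$ is smooth at $p$, the support line there is unique and equal to the tangent line, contradicting $n_1 \neq n_2$.

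The hard part will be the transition from support lines of $\Omega_k$ based at the moving point $V_k$ to a bona fide support line of $\Omega$ based at $p$: one has to control the drift of the base point and simultaneously pass limits in the half-plane inequality indexed by $k$. Convexity of $\Omega_k$ together with the two-sided approximation property in Proposition \ref{prop:Hausdorf_distance_less_than_epsilon} is exactly what makes this step rigorous. Everything else is bookkeeping: the vertex landing on $\partial \Omega$ rather than in the interior follows from convex-body Hausdorff convergence applied to small balls, and the uniqueness of the support line at a smooth boundary point is the defining property of smoothness. Reducing the uniform statement to the single-sequence argument is handled by applying the above to a sequence $j_k$ realizing $\min_j \theta_{k,j}$.
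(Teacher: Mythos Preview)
Your proposal is correct and follows essentially the same route as the paper: argue by contradiction, extract a subsequence of vertices whose interior angles stay bounded away from $\pi$, show the limiting vertex lies on $\partial\Omega$, and then observe that the two edge directions yield two distinct support lines of $\Omega$ at that point, contradicting uniqueness of the tangent at a smooth boundary point. The only cosmetic difference is that the paper normalizes by isometries to pin the vertex at the origin and works with the enclosing sector $S(\theta_k)$, whereas you keep $\Omega$ fixed and pass to limits in the half-plane inequalities via the normals $n_j^k$; your framing also absorbs the case $\theta_0=0$ into the same argument rather than treating it separately.
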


\begin{proof} By convexity, the angles $\theta_{k,j}$ are between 0 and $\pi$. In particular the sequence of angles is bounded, so we know by Bolzano-Weierstrass' theorem that there is at least one convergent subsequence $\theta_{k_l,j_l} \to \theta \in [0,\pi]$. We want to show that $\theta = \pi$. If $\theta = 0$, then the polygons collapse and $|\Omega| = 0$, which contradicts that $\Omega$ is a domain. Now assume that $0 < \theta < \pi$.
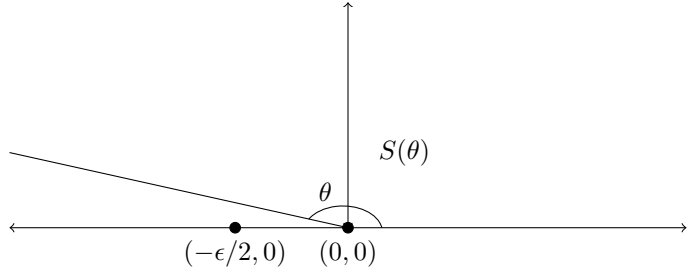
\begin{figure}[hbt!] 
\centering
\begin{tikzpicture}[x=1.5cm,y=1cm]
    \draw[<->] (-3,0)--(3,0); % l'axe des abscisses
    \draw[<->] (0,0)--(0,3); % l'axe des ordonnées
    \draw[-] (0,0)--(-3,1); % l'axe des abscisses
    \draw (0,0) circle[radius=2pt]; and \fill (0,0) node[right = 12pt, below = 1pt]{$(0,0)$} circle[radius=2pt];
    \draw (0,0) circle[radius=0pt]; and \fill (-0.2,-0.02) node[left = 5pt, above = 7pt]{$\theta$} circle[radius=0pt];
    \draw (-1,0) circle[radius=2pt]; and \fill (-1,0) node[left = 5pt, below = 1pt]{$(-\epsilon/2,0)$} circle[radius=2pt];
    \draw (0,0) circle[radius=0pt]; and \fill (0.5,0) node[right = 20pt, above = 20pt]{$S(\theta)$} circle[radius=0pt];
    
    \draw (0.3,0) arc (10:150:0.35); % arcs
\end{tikzpicture}
\caption{We apply isometries so that one vertex is at the origin, one edge is parallel to the $x$-axis, and one edge is in the upper half plane. If the opening angle $\theta < \pi$, then points on the left $x$-axis will have a positive distance to $S(\theta)$.} 
\label{fig:polygons_sector_angle}
\end{figure}

After applying isometries of $\R^2$ and re-naming, we have a sequence of angles $\theta_k \to \theta$ at a vertex in $\Omega_k$ which we place at the origin $(0,0) \in \R^2$ such that one edge of $\Omega_k$ adjacent to the vertex is contained in $\{(x,0) : x \geq 0\}$, and the other edge is contained in $\{(r\cos(\theta_k), r\sin(\theta_k)) : r > 0\}$ (see Figure \ref{fig:polygons_sector_angle}). Then, by convexity, $\overline{\Omega_k}$ is contained in the infinite sector
\begin{equation*}
    S(\theta_k) := \{(r\cos(\phi), r\sin(\phi)) : r \geq 0, \,\,0 \leq \phi \leq \theta_k\}.
\end{equation*}
Moreover, we have $S(\theta_k) \to S(\theta)$ since $\theta_k \to \theta$. Indeed, this follows from the fact that for any $r \geq 0$,
\begin{equation*}
    \lim_{k \to \infty} r\sqrt{(\cos(\theta_k) - \cos(\theta))^2 + (\sin(\theta_k) - \sin(\theta))^2} = 0.
\end{equation*}
We claim that this implies that $\overline{\Omega} \subset S(\theta)$. To show this, suppose $x \in \overline{\Omega} \backslash S(\theta)$. Since $\overline{\Omega}$ and $S(\theta)$ are closed, this would imply that 
\begin{equation*}
    \inf_{y \in S(\theta)} ||x-y|| = \epsilon,
\end{equation*}
for some $\epsilon > 0$. Now choose $k \geq 1$ such that $d_H(\overline{\Omega}, \overline{\Omega_k}) \leq \epsilon/3$ and $d_H(S(\theta), S(\theta_k)) \leq \epsilon/3$. Then there exists an $x_k \in \overline{\Omega_k}$ with $||x-x_k|| \leq \epsilon/3$. Moreover, for any $y_k \in S(\theta_k)$ there is a $y \in S(\theta)$ with $||y-y_k|| \leq \epsilon/3$. Thus,
\begin{equation*}
    ||x_k-y_k|| \geq ||x-y|| - ||x_k - x|| - ||y-y_k|| \geq \epsilon/3,
\end{equation*}
which shows that $x_k \notin S(\theta_k)$. This contradicts that $\overline{\Omega_k} \subset S(\theta_k)$, so indeed we must have $\overline{\Omega} \subset S(\theta)$. 

Next, we want to show that $(0,0) \in \partial\Omega$. If $(0,0) \notin \overline{\Omega}$, then since $\overline{\Omega}$ is closed
\begin{equation*}
    \inf_{y \in \overline{\Omega}} ||y|| =: \delta > 0.
\end{equation*}
Then $d_H(\overline{\Omega_k}, \overline{\Omega}) \geq \delta > 0$, contradicting that $d_H(\overline{\Omega_k}, \overline{\Omega}) \to 0$. So, $(0,0) \in \overline{\Omega}$. If $(0,0) \in \Omega$ then, since $\Omega$ is a domain and hence open, there is an $\epsilon > 0$ such that $B_\epsilon((0,0)) \subset \Omega$. Then we have $(-\epsilon/2,0) \in \overline{\Omega}$. Since $\overline{\Omega} \subset S(\theta)$, this implies that $(-\epsilon/2,0) \in S(\theta)$. This is not true since $\theta < \pi$ (see Figure \ref{fig:polygons_sector_angle}). We conclude that $(0,0) \notin \Omega$, hence $(0,0) \in \partial\Omega$.

Finally, we consider the tangent line to $\partial\Omega$ at $(0,0)$. Since $\Omega$ is smoothly bounded and convex, its tangent line at $(0,0)$ can be characterized as the unique line in the plane that intersects $\overline{\Omega}$ at $(0,0)$ and such that $\overline{\Omega}$ is entirely contained on one side of the line. Now, extend the edge of $\Omega$ that lies on the $x$-axis to the entire $x$-axis. Then it intersects $\overline{\Omega}$ at $(0,0)$, and $\Omega$ is entirely on one side of the line (above) since $\overline{\Omega} \subset S(\theta)$. Thus, the $x$-axis is a tangent line to $\partial\Omega$ at $(0,0)$. However, we can do the same thing with the ray $\{(r\cos(\theta), r\sin(\theta)) : r \geq 0\}$ along the angle $\theta$. Thus, $\partial\Omega$ has two tangent lines at $(0,0)$, which contradicts the uniqueness. We therefore have a contradiction, and we conclude that $\theta$ cannot be less than $\pi$.
\end{proof}

So, now the interior angles of a polygonal domain $\Omega_k$ as in the preceding lemma are of the form $\theta_i = \pi - \epsilon_i$ for $\epsilon_i$ tending to zero.  We calculate the Taylor series expansion of sine around the point $\pi$ 
\[\sin\left( \frac{\pi^2}{\theta_i} \right)=  \left( \frac{\pi^2}{\pi - \epsilon_i} - \pi \right)(\cos(\pi)) + \mathcal O \left( \frac{\pi^2}{\pi - \epsilon_i} - \pi \right)^3 = - \frac{\pi \epsilon_i}{\pi - \epsilon_i} + \mathcal O \left( \frac{\pi \epsilon_i}{\pi - \epsilon_i}\right)^3.\]
Since the interior angles of a convex $N$-gon sum to $\pi(N-2)$ we have that 
\[ \sum_{i=1} ^N \epsilon_i = 2 \pi.\]
Thus, letting $\underline{\epsilon_i}$ be the smallest and $\overline{\epsilon_i}$ be the largest (thus $\pi - \underline{\epsilon_i}$ is the largest angle and $\pi - \overline{\epsilon_i}$ is the smallest angle) we have 
\[ \frac{\pi}{\pi - \overline{\epsilon_i}} (2\pi) \leq \sum_{i=1}^N \frac{\pi \epsilon_i}{\pi - \epsilon_i} \leq \frac{\pi}{\pi - \underline{\epsilon_i}} (2\pi). \]
Taking the limits on the left and the right we obtain that the sum in the middle tends to $2\pi$.  Consequently, the leading term in 
\[ \sum_{i=1} ^N \sin\left( \frac{\pi^2}{\theta_i} \right) \]
given by 
\[ \sum_{i=1} ^N - \frac{\pi \epsilon_i}{\pi - \epsilon_i}\]
tends to $-2\pi$. For the error term, choose $C > 0$ so that $\mathcal O \left( \frac{\pi \epsilon_i}{\pi - \epsilon_i}\right)^3 \leq C \left(\frac{\pi \epsilon_i}{\pi - \epsilon_i}\right)^3$. Then we get
\begin{equation*}
    \sum_{i=1}^N \mathcal O \left( \frac{\pi \epsilon_i}{\pi - \epsilon_i}\right)^3 \leq C \frac{\pi^3\overline{\epsilon_i}^2}{(\pi-\underline{\epsilon_i})^3} \sum_{i=1}^N \epsilon_i = 2\pi C \frac{\pi^3\overline{\epsilon_i}^2}{(\pi-\underline{\epsilon_i})^3},
\end{equation*}
which tends to zero as $N \to \infty$. This shows that
\begin{equation*}
    \sum_{i = 1}^N \sin\left(\frac{\pi^2}{\theta_i}\right)
\end{equation*}
tends to $-2\pi$.  Moreover, the product of the error term with the integral in \eqref{eq:hard_integral} also tends to zero as the angles tend to $\pi$ by the dominated convergence theorem.  Hence, recalling the factor of $-\frac{1}{8\pi}$ the limit of \eqref{eq:hard_integral} as the polygonal domains converge to the smoothly bounded domain is now fully rigorously demonstrated to be  
\[ - \frac{1}{8\pi} (-2\pi) \frac{2}{3} = \frac 1 6.\]

Next, consider a smoothly bounded domain $\Omega_h$ that has $h$ holes approximated by convex polygonal domains containing $h$ convex polygonal holes that converge to $\Omega_h$ in Hausdorff distance.  The sum of the \em exterior \em angles of a convex $N$-gon is $2\pi N - \pi(N-2) = \pi(N+2)$.  Thus, with this in mind, the arguments above show that each hole would contribute $-\frac 1 6$ to the constant term in the heat trace of $\Omega_h$.  Thus, the constant term in the heat trace, $a_0(\Omega_h) = \frac{1-h}{6}$.  In this way, Kac expected that one could hear the number of holes in a smoothly bounded domain, even if his arguments lacked rigor.  At the time of Kac's article, this term in the heat trace expansion for smoothly bounded domains had not yet been computed.  One year later, in the first article of the first volume of the Journal of Differential Geometry, McKean and Singer proved that this term is indeed correct.  So, Kac may not have been fully rigorous, but he was right \cite{mckean1967curvature} and one can indeed hear the number of holes of a smoothly bounded domain!

\section{One cannot hear...}
\label{sec:Kac_legacy}

Milnor's example from 1964 (see \S\ref{sec:quadratic_forms}) is the first example of a pair of isospectral non-isometric Riemannian manifolds. It was partly this that inspired Kac's article. One year after Kac's article Kneser found two 12-dimensional flat tori which are isospectral but non-isometric \cite{kneser1967lineare}. Kitaoka then found, expressed in number-theoretic language, two isospectral non-isometric 8-dimensional flat tori in 1977 \cite{kitaoka1977positive}. In the same year Perlis constructed algebraic number fields which are arithmetically equivalent but non-isometric \cite{perlis1977equation}. Ejiri continued this development in 1979 by constructing non-flat, compact irreducible Riemannian manifolds which are isospectral but non-isometric \cite{ejiri1979construction}. In the next year Vigneras gave examples of hyperbolic surfaces which are isospectral but non-isometric \cite{vigneras1980varietes}, and Ikeda gave examples of isospectral non-isometric lens spaces \cite{ikeda1980lens}. In 1984 Gordon and Wilson constructed continuous families of isospectral non-isometric manifolds \cite{gordon1984isospectral}. Before that, however, Hsia showed in 1981 that the forms in a genus of positive ternary quadratic forms are not classified by the set of integers they represent \cite{hsia1981regular}, and in the same year Bérard and Berger gave a survey of pairs of isospectral non-isometric manifolds that had been found up to that point \cite{berard1981spectre}. Urakawa also gave examples of bounded manifolds of dimension not less than four which are isospectral but non-isometric in 1982 \cite{urakawa1982bounded}. 

A particularly important milestone for solving the problem proposed by Kac was achieved in 1985 when Sunada provided a new approach of proving that two isospectral Riemannian manifolds are non-isometric.  Sunada may not have realized the importance of this result at the time as he simply described it as ``a geometric analogue of a routine method in number theory'' \cite{sunada1985riemannian}. This result was improved by Buser in 1986 as he proved that $\dim V_g > 0$ for a larger class of $g$-values, where $V_g$ represents a specific space of isospectral non-isometric Riemannian manifolds \cite{buser1986isospectral}. At this point developments were made rapidly, and in 1987 Brooks constructed a manifold of negative curvature and two different potentials $\Phi_1$ and $\Phi_2$ such that $\Delta + \Phi_1$ and $\Delta + \Phi_2$ have the same spectrum \cite{brooks1987manifolds}. In the same year, Brooks and Tse constructed surfaces which are isospectral but non-isometric \cite{brooks1987isospectral}, and DeTurck and Gordon constructed isospectral deformations of Riemannian metrics on manifolds on whose coverings certain nilpotent Lie groups act \cite{deturck1987isospectral}. Following that year, Brooks showed how to use Sunada's method to construct even more isospectral non-isometric manifolds \cite{brooks1988constructing}.

1989 was another important year in terms of developments of the subject. First, Brooks, Perry and Yang studied isospectral sets of conformally equivalent metrics \cite{brooks1989isospectral} and showed how one can use that to obtain isospectral non-isometric sets \cite{brooks1989isospectral}. Then, Gordon, DeTurck and Lee generalized the methods by Sunada and gave a general method for constructing isospectral non-isometric manifolds \cite{deturck1989isospectral}. In the same year, Gordon studied which geometric and topological properties of a Riemannian manifold that are determined by their spectrum and constructed different manifolds with the same spectrum \cite{gordon1989you}. In yet the same year, Bérard found isospectral but non-isometric Riemann varieties \cite{berard1989varietes}. In 1990 Schiemann found two isospectral non-isometric four-dimensional flat tori \cite{schiemann1990beispiel}. Following that year, Shiota found yet another pair of isospectral non-isometric flat tori \cite{shiota1991theta}, and Earnest and Nipp found another four-dimensional isospectral non-isometric flat tori, expressed in number-theoretic language \cite{earnest1991theta}. 

1992 was the year when Kac's problem was finally solved. First, Gordon, Webb and Wolpert used ideas from Sunada to construct a pair of isospectral, non-isometric domains in a hyperbolic setting and in the 2-sphere \cite{gordon1992isospectral}. Later that year, they were in fact able to construct two isospectral manifolds in the Euclidean plane which are not isometric, thus showing that one cannot hear the shape of a drum \cite{gordon1992one}. The story doesn't end here, however, partly because one was still interested in the properties of domains that are spectral invariants, and partly because one can ask similar questions in more general spaces. For example, in the same year Conway introduced simple notation for orbifolds \cite{conway1992orbifold}, and together with Sloane he found isospectral non-isometric five- and six-dimensional flat tori and an infinite family of four-dimensional flat tori \cite{conway1992four}. Conway continued his work in 1994 together with Buser, Doyle, and Semmler by giving more examples of isospectral non-congruent domains in the plane, and a ``simple method of proof'' \cite{buser1994some}. One year later, Chapman constructed several examples of isospectral but non-isometric domains \cite{chapman1995drums}. Then, in 1999 Schueth constructed isospectral pairs for which the fourth-order curvature invariants differ, thus showing that those invariants are not determined by the spectrum \cite{schueth1999continuous}. More such examples were given by the same author in 2001 \cite{schueth2001isospectral}.

At this point mathematicians began focusing more on finding spectral invariants rather than examples of isospectral non-isometric manifolds. As a consequence, the progress on finding more isospectral non-isometric manifolds slowed down by a significant amount. However, in 2006 Band, Shapira, and Smilansky presented and discussed isospectral quantum graphs which are not isometric \cite{band2006nodal}.  In the same year, Levitin, Parnovski and Polterovich constructed $2^d$ isospectral non-isometric domains in $\R^d$ by using \em mixed \em boundary conditions \cite{levitin2006}.  In 2011, using theory of invariants Hein and Cervino proved that every pair in the family that Conway and Sloane constructed in \cite{conway1992four} are non-isometric whenever the parameters are pairwise different \cite{cervino2011conway}. In the same year, Bari showed that for every $n \geq 9$ there is a pair of isospectral non-isometric orbifold lens spaces of dimension $n$ \cite{bari2011orbifold}, and in 2016 Lauret showed that the same is true for $n \geq 5$. In 2012 Linowitz constructed infinite towers of compact, orientable Riemannian manifolds which are isospectral but not isometric at each stage \cite{linowitz2012isospectral}. In 2013 Lauret, Miatello, and Rossetti constructed Riemannian manifolds that are isosepctral but have non-isomorphic cohomology groups \cite{lauret2013strongly}. Three years later the same three authors found examples of non-isometric lens spaces which are $p$-isospectral, i.e. the eigenvalue spectra of their Hodge-Laplacian operators acting on $p$-forms are the same, for every $p$ \cite{lauret2016spectra2}. DeFord and Doyle gave many more such examples in 2018 \cite{deford2018cyclic}. In 2021, G\'omez-Serrano and Orriols proved that ``any three eigenvalues do not determine a triangle'' \cite{any_three}.  This demonstrated the negative part of a conjecture by Antunes and Freitas \cite{antunes_freitas} that was also mentioned by Laugesen and Siudeja \cite{laug_siu}, Grieser and Maronna \cite{gm_triangles} as well as Lu and Rowlett \cite{sos}.  They proved the existence of two triangles which are not isometric to each other for which the first, second and fourth Dirichlet eigenvalues coincide.  In 2022 B{\'e}rard and Webb showed that one cannot hear the orientability of a surface by constructing two flat surfaces with boundary with the same Neumann spectrum, one orientable, the other non-orientable \cite{berard2022one}.  Although numerous results have shown that isospectrality does not necessarily imply isometry, if one restricts to certain types of `drums' one may be able to hear their shapes.  More generally, there is a growing collection of quantities that \em can \em be heard. 

\section{One cannot hear the shape of a drum, but one can hear...} \label{sec:Kac_positive} 
Here we continue with an overview of so-called positive isospectral results which show that certain quantities are spectral invariants, as well as results that contribute to the general understanding of the spectrum and its connections to geometry.  In 1966, Tanaka studied the spectrum of discontinuous groups and gave an asymptotic formula for the discrete spectrum associated with the modular group \cite{tanaka1966selberg}. In the same year Berger computed the first coefficients in the asymptotic expansion of the heat trace for closed manifolds, thus giving new spectral invariants \cite{berger1966sar}. One year later, McKean and Singer obtained the first three terms for the asymptotic expansion of the heat trace for certain classes of Riemannian manifolds \cite{mckean1967curvature}. Following that year, Cheeger found an upper bound for the first eigenvalue of a compact Riemannian manifold with non-negative curvature in terms of the diameter \cite{cheeger1968relation}. Then, in 1971 Berger, Gauduchon and Mazet gave a survey on what the spectrum on compact Riemannian manifolds implies about the geometry \cite{berger1971spectre}.  In 1973, Rozenblum obtained a Weyl-type law for the asymptotics of the eigenvalue counting function for domains of infinite measure \cite{rozenblum1973}. 

In 1978, Wolpert showed that flat tori are determined by their spectrum up to isometries, with the exception of a lower dimensional subvariety in the moduli space of flat tori. Hence, in some sense almost all flat tori are determined by their spectrum \cite{wolpert1978eigenvalue}. Two years later Ikeda showed that three-dimensional spherical space forms are determined by their spectrum \cite{ikeda1980spectrum}, and in 1983 he studied when two isospectral spherical space forms with non-cyclic fundamental groups must be isometric \cite{ikeda1983spherical}. In 1981 Brooks showed that even more information can be obtained from the spectrum by finding a new relationship between the spectrum and the fundamental group of the domain \cite{brooks1981fundamental}.  In the same year, Ivrii demonstrated spectral asymptotics, similar to Weyl's law, for elliptic operators on manifolds with boundary \cite{ivrii1981asymptotics}, 
and therewith connected the spectral asymptotics to the behavior of the geodesic flow on the manifold.   A few years later, more specifically in 1988, Osgood, Phillips, and Sarnak made a big contribution to the research area by showing that any isospectral family of two-dimensional Euclidean domains is compact in the $C^\infty$ topology \cite{osgood1988compact2} \cite{osgood1988compact1}. In the same year Durso used the solution operator for the wave equation to prove several results in the inverse spectral theory of polygonal domains \cite{durso1988inverse}. Furthermore, van den Berg and Srisatkunarajah obtained an upper bound for the error of the asymptotic expansion of the heat trace $e^{t\Delta}$ for a polygonal domain \cite{van1988heat}. One year later, Chang and DeTurck proved that to determine whether two triangles in the Euclidean plane are congruent it is enough to know that they have their first $N$ eigenvalues in common, where $N$ depends on the first two eigenvalues of the triangles \cite{chang1989hearing}. A year after that, Branson, Gilkey, and Orsted found relations between linear and quadratic terms in the asymptotic expansion of the heat kernel and used these to compute the heat variants modulo cubic terms \cite{branson1990leading}. Durso also showed that triangles are determined by their spectrum, and that for general polygons the Poisson relation holds \cite{durso1990solution}.

For the next years, most results showed negative results about the spectrum (that is, properties that are not determined by the spectrum). However, in 1994 Schiemann solved the problem about what the lowest dimension is where you can have two isospectral non-isometric flat tori. The answer turned out to be four. In particular, he showed, using an advanced computer algorithm now known as Schiemann's algorithm \cite{nilsson2022isospectral}, that two three-dimensional isospectral flat tori must be isometric \cite{schiemann1994ternare}. In 1997 Conway proved that the genus of a quadratic form is determined by its spectrum if and only if the dimension is at most four \cite{conway1997sensual}. Moreover, Kaplansky and Schiemann proved that there are at most 913 and at least 891 regular ternary quadratic forms \cite{jagy1997there}. Zelditch continued the development in 1998 by describing some results on the inverse spectral problem on a compact Riemannian manifold which is based on V.Guellemin's strategy of normal forms \cite{zelditch1998normal}. Later in 1999 Zelditch proved that bounded simply connected real analytic plane domains with the symmetry of an ellipse are determined by their spectrum \cite{zelditch1999spectral}. Then, in 2000, Watanabe studied plane domains which are isospectrally determined by studying the trace of the heat kernel \cite{watanabe2000plane}, and Hassell and Zelditch showed that any isophasal (i.e. having the same scattering-phase) family is compact in the $C^\infty$ topology \cite{hassell2000determinants}. Later in 2005, Kurasov and Nowaczyk investigated the inverse spectral problem for the Laplace operator on a finite metric graph. In particular, they showed that the problem has a unique solution for graphs with rationally independent edges and without vertices of valence 2 \cite{kurasov2005inverse}. Then, in 2007 Guillarmou and Guillopé computed the regularized determinant of the Dirichlet-to-Neumann map in terms of particular values of dynamical zeta functions using so called natural uniformization \cite{guillarmou2007determinant}. At the end of the year, Colin De Verdiére proved a semi-classical trace formula \cite{colin2007spectrum} therewith obtaining new spectral invariants.

The development kept on going and in 2008 Kim showed that on some compact surfaces the set of flat metrics which have the same Laplacian spectrum is compact in the $C^\infty$ topology \cite{kim2008surfaces}. In 2011 Oh proved that 8 of the remaining 22 ternary quadratic forms are regular \cite{oh2011regular}, and Antunes and Freitas gave numerical evidence that you only need the first three eigenvalues to determine a triangle completely \cite{antunes2011inverse}. At the end of the year, Datchev and Hezari also gave a survey of positive inverse spectral and inverse resonant results for Laplacians on bounded domains, Laplace-Beltrami operators on compact manifolds, Shrödinger operators, Laplacians on exterior domains, and Laplacians on manifolds which are hyperbolic near infinity \cite{datchev2011inverse}.  In 2016, Ivrii made a profound contribution to the field by publishing a survey on \em 100 years of Weyl's law \em \cite{ivrii2016100}.   In the same year, Lu and Rowlett showed that corners of domains in the Euclidean plane are determined by the spectrum \cite{lu2016one}. In the following year Meyerson and McDonald proved that the heat content determines planar triangles \cite{meyerson2017heat}, and in 2018 Aldana and Rowlett proved a variational formula which shows how the zeta-regularized determinant of the Laplacian varies with respect to the opening angle \cite{aldana2018polyakov}. Following that year, Nursultanov, Rowlett, and Sher constructed the heat kernel on curvilinear polygonal domains with Dirichlet, Neumann, and Robin conditions \cite{nursultanov2019heat}, and later in the same year they showed that the corners of a planar domain are determined by the spectrum under the same boundary conditions \cite{nursultanov2019hear}. In 2020, Kim and Oh made a return and proved that there are exactly 49 primitive regular ternary triangular forms \cite{kim2020regular}, and Bari and Hunsicker showed that two isospectral orbifold lens spaces in dimension $n \leq 4$ must be isometric \cite{bari2020isospectrality}. Most recently, in 2022, Enciso and G\'omez-Serrano used heat trace invariants to prove that semi-regular polygons are spectrally determined in the class of convex piecewise smooth domains \cite{semiregular_polygons}.  Semi-regular polygons are defined to be those which are circumscriptible and all angles, save possibly one that is larger than the rest, are equal.  

\section{We're still listening...} \label{sec:outlook}
Can one hear the shape of a convex drum? Can one hear the shape of a smooth drum? How many drums can sound the same?  What all can one hear?  These are just a few of numerous open questions in the rich field of spectral geometry.  While one can study the spectrum and geometry of certain types of domains or Riemannian manifolds, and consider in that sense a `static' problem, one could also study more dynamical problems in which the geometry changes.  Indeed, Kac began this type of investigation by letting polygonal domains approximate a smoothly bounded domain.  In this context, how do spectral invariants behave under deformation of domains? There are many different notions of convergence of sets, some more suitable than others depending on the situation. Here we used Hausdorff convergence because it is relatively simple while also being good enough for our purposes. 

The convexity assumption turns out to be essential to obtaining convergence of the heat trace coefficients as illustrated by the following example. Let $\Omega$ be the unit disc, and for each $k$, let $\Omega_k$ be an enclosed $2^k$-sided polygon such that every edge has length $1/k$ and every other vertex lies on the boundary $\partial\Omega$. Figure \ref{fig:nonconvex_convergence} illustrates $\Omega_k$ for $k = 7$ and $k = 10$. Then it is clear that $\Omega_k \to \Omega$ in Hausdorff distance since $||x-y|| \leq 1/k$ for every $x \in \Omega_k$ and $y \in \Omega$. However, the perimeter of $\Omega_k$ is $2^k/k$, which goes to $\infty$ as $k \to \infty$, while the perimeter of $\Omega$ is $2\pi$. This shows that $\Omega_k \to \Omega$ does not necessarily imply that $|\partial\Omega_k| \to |\partial\Omega|$. Moreover, every interior angle of $\Omega_k$ converges to 0.  Thus, heat trace coefficients do not necessarily converge under Hausdorff convergence of domains.  

\begin{figure}[hbt!] 
\centering
\includegraphics[width=12cm,height=6cm]{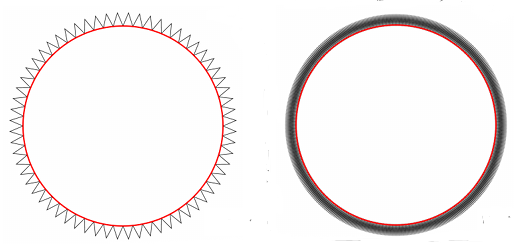}
\caption{The polygons converge to the unit disc in the sense of Hausdorff, but the perimeter diverges. The left shows $k = 7$ and the right $k = 10$.}
\label{fig:nonconvex_convergence}
\end{figure}

Other examples of convergence of sets are Gromov-Hausdorff convergence \cite[\S5]{petruninlectures}, pointed Gromov-Hausdorff convergence \cite{jansen2017notes}, and geometric convergence \cite{anderson2004boundary}.  It is worth noting that in 1997--2000 Cheeger and Colding demonstrated robust convergence results for the Laplace spectra on manifolds \cite{cc1}, \cite{cc2}, \cite{cc3} and their limit spaces under Gromov-Hausdorff convergence. In 2002 Ding obtained convergence results for the heat kernel \cite{ding2002} under the same type of limiting process. A natural question is: under which notions of convergence do which spectral invariants converge?  One may begin with the spectrum itself, and then one may continue to investigate other spectral invariants, as Kac did with the heat trace coefficients.  There is still so much more that can be done! We can only dream about what the landscape of this field will be like in another 112 years, and we wish that all who endeavor to contribute enjoy fruitful results!  

\begin{bibdiv}
\begin{biblist}

\bib{dlmf}{misc}{
       title={Nist digital library of mathematical functions},
         how={\url{https://dlmf.nist.gov/}},
        note={Accessed: 2023-05-20},
}

\bib{albin2017linear}{article}{
      author={Albin, P.},
       title={Linear analysis on manifolds},
        date={2017},
     journal={American Mathematical Society. URl: https://www. ams.
  org/open-math-notes/omn-view-listing},
}

\bib{aldana2018polyakov}{article}{
      author={Aldana, C.L.},
      author={Rowlett, J.},
       title={A polyakov formula for sectors},
        date={2018},
     journal={The Journal of Geometric Analysis},
      volume={28},
       pages={1773\ndash 1839},
}

\bib{anderson2004boundary}{article}{
      author={Anderson, M.},
      author={Katsuda, A.},
      author={Kurylev, Y.},
      author={Lassas, M.},
      author={Taylor, M.},
       title={Boundary regularity for the ricci equation, geometric
  convergence, and gel’fand’s inverse boundary problem},
        date={2004},
     journal={Inventiones mathematicae},
      volume={158},
       pages={261\ndash 321},
}

\bib{antunes2011inverse}{article}{
      author={Antunes, P. R.~S.},
      author={Freitas, P.},
       title={On the inverse spectral problem for euclidean triangles},
        date={2011},
     journal={Proceedings of the Royal Society A: Mathematical, Physical and
  Engineering Sciences},
      volume={467},
      number={2130},
       pages={1546\ndash 1562},
}

\bib{antunes_freitas}{article}{
      author={Antunes, Pedro R.~S.},
      author={Freitas, Pedro},
       title={On the inverse spectral problem for {E}uclidean triangles},
        date={2011},
        ISSN={1364-5021,1471-2946},
     journal={Proc. R. Soc. Lond. Ser. A Math. Phys. Eng. Sci.},
      volume={467},
      number={2130},
       pages={1546\ndash 1562},
         url={https://doi.org/10.1098/rspa.2010.0540},
      review={\MR{2795790}},
}

\bib{band2006nodal}{article}{
      author={Band, R.},
      author={Shapira, T.},
      author={Smilansky, U.},
       title={Nodal domains on isospectral quantum graphs: the resolution of
  isospectrality?},
        date={2006},
     journal={Journal of Physics A: Mathematical and General},
      volume={39},
      number={45},
       pages={13999},
}

\bib{bari2020isospectrality}{article}{
      author={Bari, N.~S.},
      author={Hunsicker, E.},
       title={Isospectrality for orbifold lens spaces},
        date={2020},
     journal={Canadian Journal of Mathematics},
      volume={72},
      number={2},
       pages={281\ndash 325},
}

\bib{bari2011orbifold}{article}{
      author={Bari, S.~U.},
       title={Orbifold lens spaces that are isospectral but not isometric},
        date={2011},
     journal={Osaka J. Math},
      volume={48},
      number={1},
       pages={1\ndash 40},
}

\bib{berard1989varietes}{article}{
      author={B{\'e}rard, P.},
       title={Vari{\'e}t{\'e}s riemanniennes isospectrales non
  isom{\'e}triques},
        date={1989},
     journal={Ast{\'e}risque},
      volume={177},
      number={178},
       pages={127\ndash 154},
}

\bib{berard1981spectre}{inproceedings}{
      author={B{\'e}rard, P.},
      author={Berger, M.},
       title={Le spectre d’une vari{\'e}t{\'e} riemannienne en},
        date={1981},
   booktitle={Act. colloq. franco},
}

\bib{berard2022one}{article}{
      author={B{\'e}rard, P.},
      author={Webb, D.~L.},
       title={One can’t hear orientability of surfaces},
        date={2022},
     journal={Mathematische Zeitschrift},
      volume={300},
      number={1},
       pages={139\ndash 160},
}

\bib{berger1966sar}{article}{
      author={Berger, M.},
       title={Sar les spectre d’une vari{\'e}t{\'e} riemannienne},
        date={1966},
     journal={CR Acad. Sci. Paris},
      volume={263},
       pages={13\ndash 16},
}

\bib{berger1971spectre}{book}{
      author={Berger, M.},
      author={Gauduchon, P.},
      author={Mazet, E.},
       title={Le spectre d'une vari{\'e}t{\'e} riemannienne},
   publisher={Springer},
        date={1971},
}

\bib{branson1990leading}{article}{
      author={Branson, T.~P.},
      author={Gilkey, P.~B.},
      author={{\O}rsted, B.},
       title={Leading terms in the heat invariants},
        date={1990},
     journal={Proceedings of the American Mathematical Society},
      volume={109},
      number={2},
       pages={437\ndash 450},
}

\bib{brooks1981fundamental}{article}{
      author={Brooks, R.},
       title={The fundamental group and the spectrum of the laplacian},
        date={1981},
     journal={Commentarii Mathematici Helvetici},
      volume={56},
      number={1},
       pages={581\ndash 598},
}

\bib{brooks1987manifolds}{article}{
      author={Brooks, R.},
       title={On manifolds of negative curvature with isospectral potentials},
        date={1987},
     journal={Topology},
      volume={26},
      number={1},
       pages={63\ndash 66},
}

\bib{brooks1988constructing}{article}{
      author={Brooks, R.},
       title={Constructing isospectral manifolds},
        date={1988},
     journal={The American Mathematical Monthly},
      volume={95},
      number={9},
       pages={823\ndash 839},
}

\bib{brooks1989isospectral}{article}{
      author={Brooks, R.},
      author={Perry, P.},
      author={Yang, P.},
       title={Isospectral sets of conformally equivalent metrics},
        date={1989},
     journal={Duke mathematical journal},
      volume={58},
      number={1},
       pages={131\ndash 150},
}

\bib{brooks1987isospectral}{article}{
      author={Brooks, R.},
      author={Tse, R.},
       title={Isospectral surfaces of small genus},
        date={1987},
     journal={Nagoya Mathematical Journal},
      volume={107},
       pages={13\ndash 24},
}

\bib{buser1986isospectral}{inproceedings}{
      author={Buser, P.},
       title={Isospectral riemann surfaces},
        date={1986},
   booktitle={Annales de l'institut fourier},
      volume={36},
       pages={167\ndash 192},
}

\bib{buser1994some}{article}{
      author={Buser, P.},
      author={Conway, J.~H.},
      author={Doyle, P.},
      author={Semmler, K.},
       title={Some planar isospectral domains},
        date={1994},
     journal={International Mathematics Research Notices},
      volume={9},
       pages={391–400},
}

\bib{canzani_galkowski_beams}{article}{
      author={Canzani, Yaiza},
      author={Galkowski, Jeffrey},
       title={Improvements for eigenfunction averages: an application of
  geodesic beams},
        date={2023},
        ISSN={0022-040X,1945-743X},
     journal={J. Differential Geom.},
      volume={124},
      number={3},
       pages={443\ndash 522},
         url={https://doi.org/10.4310/jdg/1689262062},
      review={\MR{4614543}},
}

\bib{canzani_galkowski2023}{article}{
      author={Canzani, Yaiza},
      author={Galkowski, Jeffrey},
       title={Weyl remainders: an application of geodesic beams},
        date={2023},
        ISSN={0020-9910,1432-1297},
     journal={Invent. Math.},
      volume={232},
      number={3},
       pages={1195\ndash 1272},
         url={https://doi.org/10.1007/s00222-023-01178-5},
      review={\MR{4588565}},
}

\bib{carslaw1910green}{article}{
      author={Carslaw, H.~S.},
       title={The green's function for a wedge of any angle, and other problems
  in the conduction of heat},
        date={1910},
     journal={Proceedings of the London Mathematical Society},
      volume={2},
      number={1},
       pages={365\ndash 374},
}

\bib{cervino2011conway}{article}{
      author={Cervi{\~n}o, J.~M.},
      author={Hein, G.},
       title={The conway--sloane tetralattice pairs are non-isometric},
        date={2011},
     journal={Advances in Mathematics},
      volume={228},
      number={1},
       pages={153\ndash 166},
}

\bib{chang1989hearing}{article}{
      author={Chang, P.},
      author={DeTurck, D.},
       title={On hearing the shape of a triangle},
        date={1989},
     journal={Proceedings of the American Mathematical Society},
      volume={105},
      number={4},
       pages={1033\ndash 1038},
}

\bib{chapman1995drums}{article}{
      author={Chapman, S.~J.},
       title={Drums that sound the same},
        date={1995},
     journal={The American mathematical monthly},
      volume={102},
      number={2},
       pages={124\ndash 138},
}

\bib{cheeger1968relation}{article}{
      author={Cheeger, J.},
       title={The relation between the laplacian and the diameter for manifolds
  of non-negative curvature},
        date={1968},
     journal={Archiv der Mathematik},
      volume={19},
       pages={558\ndash 560},
}

\bib{cc1}{article}{
      author={Cheeger, J.},
      author={Colding, T.~H.},
       title={On the structure of spaces with {R}icci curvature bounded below.
  {I}},
        date={1997},
        ISSN={0022-040X,1945-743X},
     journal={J. Differential Geom.},
      volume={46},
      number={3},
       pages={406\ndash 480},
         url={http://projecteuclid.org/euclid.jdg/1214459974},
      review={\MR{1484888}},
}

\bib{cc2}{article}{
      author={Cheeger, J.},
      author={Colding, T.~H.},
       title={On the structure of spaces with {R}icci curvature bounded below.
  {II}},
        date={2000},
        ISSN={0022-040X,1945-743X},
     journal={J. Differential Geom.},
      volume={54},
      number={1},
       pages={13\ndash 35},
         url={http://projecteuclid.org/euclid.jdg/1214342145},
      review={\MR{1815410}},
}

\bib{cc3}{article}{
      author={Cheeger, J.},
      author={Colding, T.~H.},
       title={On the structure of spaces with {R}icci curvature bounded below.
  {III}},
        date={2000},
        ISSN={0022-040X,1945-743X},
     journal={J. Differential Geom.},
      volume={54},
      number={1},
       pages={37\ndash 74},
         url={http://projecteuclid.org/euclid.jdg/1214342146},
      review={\MR{1815411}},
}

\bib{colin2007spectrum}{inproceedings}{
      author={Colin~de Verdi{\`e}re, Y.~C.},
       title={Spectrum of the laplace operator and periodic geodesics: thirty
  years after},
        date={2007},
   booktitle={Annales de l'institut fourier},
      volume={57},
       pages={2429\ndash 2463},
}

\bib{conway1992orbifold}{misc}{
      author={Conway, J.~H.},
       title={The orbifold notation for surface groups in groups",
  combinatorics and geometry, eds m. liebeck and j. saxl},
   publisher={Cambridge University Press},
        date={1992},
}

\bib{conway1997sensual}{article}{
      author={Conway, J.~H.},
       title={The sensual (quadratic) form, carus mathematical monographs, 26
  math},
        date={1997},
     journal={Assoc. of America, Washington, DC},
}

\bib{conway1992four}{article}{
      author={Conway, J.~H.},
      author={Sloane, N. J.~A.},
       title={Four-dimensional lattices with the same theta series},
        date={1992},
     journal={International Mathematics Research Notices},
      volume={1992},
      number={4},
       pages={93\ndash 96},
}

\bib{conway2013sphere}{book}{
      author={Conway, J.~H.},
      author={Sloane, N. J.~A.},
       title={Sphere packings, lattices and groups},
   publisher={Springer Science \& Business Media},
        date={2013},
      volume={290},
}

\bib{datchev2011inverse}{article}{
      author={Datchev, K.},
      author={Hezari, H.},
       title={Inverse problems in spectral geometry},
        date={2011},
     journal={Inverse problems and applications: Inside Out II},
      volume={60},
       pages={455\ndash 486},
}

\bib{deford2018cyclic}{article}{
      author={DeFord, D.},
      author={Doyle, P.},
       title={Cyclic groups with the same hodge series},
        date={2018},
     journal={Rev. Un. Mat. Argentina},
      volume={59},
      number={2},
       pages={241\ndash 254},
}

\bib{deturck1987isospectral}{article}{
      author={DeTurck, D.~M.},
      author={Gordon, C.~S.},
       title={Isospectral deformations i: Riemannian structures on two-step
  nilspaces},
        date={1987},
     journal={Communications on pure and applied mathematics},
      volume={40},
      number={3},
       pages={367\ndash 387},
}

\bib{deturck1989isospectral}{article}{
      author={DeTurck, D.~M.},
      author={Gordon, C.~S.},
      author={Lee, K.~B.},
       title={Isospectral deformations ii: Trace formulas, metrics, and
  potentials},
        date={1989},
     journal={Communications on Pure and Applied Mathematics},
      volume={42},
      number={8},
       pages={1067\ndash 1095},
}

\bib{ding2002}{article}{
      author={Ding, Y.},
       title={Heat kernels and {G}reen's functions on limit spaces},
        date={2002},
        ISSN={1019-8385,1944-9992},
     journal={Comm. Anal. Geom.},
      volume={10},
      number={3},
       pages={475\ndash 514},
         url={https://doi.org/10.4310/CAG.2002.v10.n3.a3},
      review={\MR{1912256}},
}

\bib{durso1988inverse}{thesis}{
      author={Durso, C.},
       title={On the inverse spectral problem for polygonal domains},
        type={Ph.D. Thesis},
        date={1988},
}

\bib{durso1990solution}{thesis}{
      author={Durso, C.},
       title={Solution of the inverse spectral problem for triangles},
        type={Ph.D. Thesis},
        date={1990},
}

\bib{earnest1991theta}{article}{
      author={Earnest, A.~G.},
      author={Nipp, G.~L.},
       title={On the theta series of positive quaternary quadratic forms},
        date={1991},
     journal={CR Math. Rep. Acad. Sci. Canada},
      volume={13},
       pages={33\ndash 38},
}

\bib{ejiri1979construction}{article}{
      author={Ejiri, N.},
       title={A construction of non-flat, compact irreducible riemannian
  manifolds which are isospectral but not isometric},
        date={1979},
     journal={Mathematische Zeitschrift},
      volume={168},
       pages={207\ndash 212},
}

\bib{semiregular_polygons}{article}{
      author={Enciso, Alberto},
      author={G\'omez-Serrano, Javier},
       title={Spectral determination of semi-regular polygons},
        date={2022},
        ISSN={0022-040X,1945-743X},
     journal={J. Differential Geom.},
      volume={122},
      number={3},
       pages={399\ndash 419},
         url={https://doi.org/10.4310/jdg/1675712993},
      review={\MR{4544558}},
}

\bib{folland1999real}{book}{
      author={Folland, G.~B.},
       title={Real analysis: modern techniques and their applications},
   publisher={John Wiley \& Sons},
        date={1999},
      volume={40},
}

\bib{frank2022schrodinger}{book}{
      author={Frank, R.~L.},
      author={Laptev, A.},
      author={Weidl, T.},
       title={Schr{\"o}dinger operators: eigenvalues and lieb--thirring
  inequalities},
   publisher={Cambridge University Press},
        date={2022},
      volume={200},
}

\bib{gaponenko1969method}{article}{
      author={Gaponenko, N.~P.},
      author={Zaks, D.~I.},
       title={The method of images for solving the equations of heat conduction
  in layered media},
        date={1969},
     journal={Journal of engineering physics},
      volume={17},
      number={3},
       pages={1162\ndash 1166},
}

\bib{any_three}{article}{
      author={G\'omez-Serrano, Javier},
      author={Orriols, Gerard},
       title={Any three eigenvalues do not determine a triangle},
        date={2021},
        ISSN={0022-0396,1090-2732},
     journal={J. Differential Equations},
      volume={275},
       pages={920\ndash 938},
         url={https://doi.org/10.1016/j.jde.2020.11.002},
      review={\MR{4191346}},
}

\bib{gordon1989you}{article}{
      author={Gordon, C.},
       title={When you can’t hear the shape of a manifold},
        date={1989},
     journal={Math. Intelligencer},
      volume={11},
      number={3},
       pages={39\ndash 47},
}

\bib{gordon1992isospectral}{article}{
      author={Gordon, C.},
      author={Webb, D.},
      author={Wolpert, S.},
       title={Isospectral plane domains and surfaces via riemannian orbifolds},
        date={1992},
     journal={Inventiones mathematicae},
      volume={110},
      number={1},
       pages={1\ndash 22},
}

\bib{gordon1992one}{article}{
      author={Gordon, C.},
      author={Webb, D.},
      author={Wolpert, S.},
       title={One cannot hear the shape of a drum},
        date={1992},
     journal={Bulletin of the American Mathematical Society},
      volume={27},
      number={1},
       pages={134\ndash 138},
}

\bib{gordon1984isospectral}{article}{
      author={Gordon, C.},
      author={Wilson, E.~N.},
       title={Isospectral deformations of compact solvmanifolds},
        date={1984},
     journal={Journal of Differential Geometry},
      volume={19},
      number={1},
       pages={241\ndash 256},
}

\bib{gm_triangles}{article}{
      author={Grieser, Daniel},
      author={Maronna, Svenja},
       title={Hearing the shape of a triangle},
        date={2013},
        ISSN={0002-9920,1088-9477},
     journal={Notices Amer. Math. Soc.},
      volume={60},
      number={11},
       pages={1440\ndash 1447},
         url={https://doi.org/10.1090/noti1063},
      review={\MR{3154630}},
}

\bib{guillarmou2007determinant}{article}{
      author={Guillarmou, C.},
      author={Guillop{\'e}, L.},
       title={The determinant of the dirichlet-to-neumann map for surfaces with
  boundary},
        date={2007},
     journal={International Mathematics Research Notices},
      volume={2007},
}

\bib{hassell2000determinants}{article}{
      author={Hassell, A.},
      author={Zelditch, S.},
       title={{Determinants of Laplacians in exterior domains}},
        date={199901},
        ISSN={1073-7928},
     journal={International Mathematics Research Notices},
      volume={1999},
      number={18},
       pages={971\ndash 1004},
  eprint={https://academic.oup.com/imrn/article-pdf/1999/18/971/1863790/1999-18-971.pdf},
         url={https://doi.org/10.1155/S1073792899000513},
}

\bib{hatcher2005algebraic}{book}{
      author={Hatcher, A.},
       title={Algebraic topology},
   publisher={Cambridge University Press},
        date={2005},
}

\bib{henningson2013borja}{book}{
      author={Henningson, M.},
       title={B{\"o}rja med kvantfysik},
   publisher={Institutionen f{\"o}r fundamental fysik, Chalmers tekniska
  h{\"o}gskola},
        date={2013},
}

\bib{hsia1981regular}{article}{
      author={Hsia, J.~S.},
       title={Regular positive ternary quadratic forms},
        date={1981},
     journal={Mathematika},
      volume={28},
      number={2},
       pages={231\ndash 238},
}

\bib{ikeda1980lens}{inproceedings}{
      author={Ikeda, A.},
       title={On lens spaces which are isospectral but not isometric},
        date={1980},
   booktitle={Annales scientifiques de l'{\'e}cole normale sup{\'e}rieure},
      volume={13},
       pages={303\ndash 315},
}

\bib{ikeda1980spectrum}{article}{
      author={Ikeda, A.},
       title={On the spectrum of a riemannian manifold of positive constant
  curvature},
        date={1980},
     journal={Osaka Journal of Mathematics},
      volume={17},
      number={1},
       pages={75\ndash 93},
}

\bib{ikeda1983spherical}{article}{
      author={Ikeda, A.},
       title={On spherical space forms which are isospectral but not
  isometric},
        date={1983},
     journal={Journal of the Mathematical Society of Japan},
      volume={35},
      number={3},
       pages={437\ndash 444},
}

\bib{ivrii1981asymptotics}{inproceedings}{
      author={Ivrii, V.},
       title={Asymptotics of the eigenvalues for some elliptic operators acting
  in bundles over a manifold with a boundary},
organization={Russian Academy of Sciences},
        date={1981},
   booktitle={Doklady akademii nauk},
      volume={258},
       pages={1045\ndash 1046},
}

\bib{ivrii2016100}{article}{
      author={Ivrii, V.},
       title={100 years of weyl’s law},
        date={2016},
     journal={Bulletin of Mathematical Sciences},
      volume={6},
      number={3},
       pages={379\ndash 452},
}

\bib{jagy1997there}{article}{
      author={Jagy, W.~C.},
      author={Kaplansky, I.},
      author={Schiemann, A.},
       title={There are 913 regular ternary forms},
        date={1997},
     journal={Mathematika},
      volume={44},
      number={2},
       pages={332\ndash 341},
}

\bib{jakvsic1992eigenvalue}{article}{
      author={Jak{\v{s}}i{\'c}, V.},
      author={Mol{\v{c}}anov, S.},
      author={Simon, B.},
       title={Eigenvalue asymptotics of the neumann laplacian of regions and
  manifolds with cusps},
        date={1992},
     journal={Journal of functional analysis},
      volume={106},
      number={1},
       pages={59\ndash 79},
}

\bib{jansen2017notes}{misc}{
      author={Jansen, D.},
       title={Notes on pointed gromov-hausdorff convergence},
        date={2017},
        note={arXiv preprint arXiv:1703.09595},
}

\bib{kac1966can}{article}{
      author={Kac, M.},
       title={Can one hear the shape of a drum?},
        date={1966},
     journal={The american mathematical monthly},
      volume={73},
      number={4P2},
       pages={1\ndash 23},
}

\bib{kim2020regular}{article}{
      author={Kim, M.},
      author={Oh, B.},
       title={Regular ternary triangular forms},
        date={2020},
     journal={Journal of Number Theory},
      volume={214},
       pages={137\ndash 169},
}

\bib{kim2008surfaces}{article}{
      author={Kim, Y.},
       title={Surfaces with boundary: their uniformizations, determinants of
  laplacians, and isospectrality},
        date={2008},
}

\bib{kitaoka1977positive}{article}{
      author={Kitaoka, Y.},
       title={Positive definite quadratic forms with the same representation
  numbers},
        date={1977},
     journal={Archiv der Mathematik},
      volume={28},
      number={1},
       pages={495\ndash 497},
}

\bib{kneser1967lineare}{article}{
      author={Kneser, M.},
       title={Lineare relationen zwischen darstellungsanzahlen quadratischer
  formen},
        date={1967},
     journal={Mathematische Annalen},
      volume={168},
      number={1},
       pages={31\ndash 39},
}

\bib{kurasov2005inverse}{article}{
      author={Kurasov, P.},
      author={Nowaczyk, M.},
       title={Inverse spectral problem for quantum graphs},
        date={2005},
     journal={Journal of Physics A: Mathematical and General},
      volume={38},
      number={22},
       pages={4901},
}

\bib{larsson2003partial}{book}{
      author={Larsson, S.},
      author={Thom{\'e}e, V.},
       title={Partial differential equations with numerical methods},
   publisher={Springer},
        date={2003},
      volume={45},
}

\bib{laug_siu}{incollection}{
      author={Laugesen, Richard~S.},
      author={Siudeja, Bart\l omiej~A.},
       title={Triangles and other special domains},
        date={2017},
   booktitle={Shape optimization and spectral theory},
   publisher={De Gruyter Open, Warsaw},
       pages={149\ndash 200},
         url={https://doi.org/10.1515/9783110550887-006},
      review={\MR{3681150}},
}

\bib{lauret2013strongly}{article}{
      author={Lauret, E.~A.},
      author={Miatello, R.~J.},
      author={Rossetti, J.~P.},
       title={Strongly isospectral manifolds with nonisomorphic cohomology
  rings},
        date={2013},
     journal={Revista Matematica Iberoamericana},
      volume={29},
      number={2},
       pages={611\ndash 634},
}

\bib{lauret2016spectra2}{article}{
      author={Lauret, E.~A.},
      author={Miatello, R.~J.},
      author={Rossetti, J.~P.},
       title={Spectra of lens spaces from 1-norm spectra of congruence
  lattices},
        date={2016},
     journal={International Mathematics Research Notices},
      volume={2016},
      number={4},
       pages={1054\ndash 1089},
}

\bib{leissa}{article}{
      author={Leissa, A.~W.},
       title={The historical bases of the rayleigh and ritz methods},
        date={2005},
        ISSN={0022-460X},
     journal={Journal of Sound and Vibration},
      volume={287},
      number={4},
       pages={961\ndash 978},
  url={https://www.sciencedirect.com/science/article/pii/S0022460X05000362},
}

\bib{levitin2006}{article}{
      author={Levitin, M.},
      author={Parnovski, L.},
      author={Polterovich, I.},
       title={Isospectral domains with mixed boundary conditions},
        date={2006},
        ISSN={0305-4470,1751-8121},
     journal={J. Phys. A},
      volume={39},
      number={9},
       pages={2073\ndash 2082},
         url={https://doi.org/10.1088/0305-4470/39/9/006},
      review={\MR{2211977}},
}

\bib{linowitz2012isospectral}{article}{
      author={Linowitz, B.},
       title={Isospectral towers of riemannian manifolds},
        date={2012},
     journal={arXiv preprint arXiv:1201.5147},
}

\bib{lu2016one}{article}{
      author={Lu, Z.},
      author={Rowlett, J.},
       title={One can hear the corners of a drum},
        date={2016},
     journal={Bulletin of the London Mathematical Society},
      volume={48},
      number={1},
       pages={85\ndash 93},
}

\bib{sos}{article}{
      author={Lu, Zhiqin},
      author={Rowlett, Julie},
       title={The sound of symmetry},
        date={2015},
        ISSN={0002-9890,1930-0972},
     journal={Amer. Math. Monthly},
      volume={122},
      number={9},
       pages={815\ndash 835},
         url={https://doi.org/10.4169/amer.math.monthly.122.9.815},
      review={\MR{3418203}},
}

\bib{maardby2023mathematics}{article}{
      author={M{\aa}rdby, G.},
       title={The mathematics of hearing the shape of a drum and filling kac's
  holes},
        date={2023},
}

\bib{mckean1967curvature}{article}{
      author={McKean~Jr, H.},
      author={Singer, I.},
       title={Curvature and the eigenvalues of the laplacian},
        date={1967},
     journal={Journal of Differential Geometry},
      volume={1},
      number={1-2},
       pages={43\ndash 69},
}

\bib{meyerson2017heat}{article}{
      author={Meyerson, R.},
      author={McDonald, P.},
       title={Heat content determines planar triangles},
        date={2017},
     journal={Proceedings of the American Mathematical Society},
      volume={145},
      number={6},
       pages={2739\ndash 2748},
}

\bib{morin2020spectral}{misc}{
      author={Morin, L.},
       title={Spectral asymptotics for the semiclassical bochner laplacian of a
  line bundle with constant rank curvature},
        date={2020},
        note={arXiv preprint arXiv:2010.00226},
}

\bib{newman1957hermann}{misc}{
      author={Newman, M. H.~A.},
       title={Hermann weyl, 1885-1955},
   publisher={The Royal Society London},
        date={1957},
}

\bib{nilsson2022isospectral}{article}{
      author={Nilsson, E.},
      author={Rowlett, J.},
      author={Rydell, F.},
       title={The isospectral problem for flat tori from three perspectives},
        date={2022},
     journal={Bulletin of the American Mathematical Society},
      volume={60},
      number={1},
       pages={39\ndash 83},
}

\bib{nursultanov2019heat}{misc}{
      author={Nursultanov, M.},
      author={Rowlett, J.},
      author={Sher, D.},
       title={The heat kernel on curvilinear polygonal domains in surfaces},
        date={2019},
        note={arXiv preprint arXiv:1905.00259},
}

\bib{nursultanov2019hear}{article}{
      author={Nursultanov, M.},
      author={Rowlett, J.},
      author={Sher, D.},
       title={How to hear the corners of a drum},
        date={2019},
     journal={2017 MATRIX Annals},
       pages={243\ndash 278},
}

\bib{oh2011regular}{article}{
      author={Oh, B.},
       title={Regular positive ternary quadratic forms},
        date={2011},
     journal={Acta Arith},
      volume={147},
      number={3},
       pages={233\ndash 243},
}

\bib{osgood1988compact2}{article}{
      author={Osgood, B.},
      author={Phillips, R.},
      author={Sarnak, P.},
       title={Compact isospectral sets of plane domains},
        date={1988},
     journal={Proceedings of the National Academy of Sciences},
      volume={85},
      number={15},
       pages={5359\ndash 5361},
}

\bib{osgood1988compact1}{article}{
      author={Osgood, B.},
      author={Phillips, R.},
      author={Sarnak, P.},
       title={Compact isospectral sets of surfaces},
        date={1988},
     journal={Journal of functional analysis},
      volume={80},
      number={1},
       pages={212\ndash 234},
}

\bib{perlis1977equation}{article}{
      author={Perlis, R.},
       title={On the equation $\zeta_k (s)= \zeta_k ' (s)$},
        date={1977},
     journal={Journal of number theory},
      volume={9},
      number={3},
       pages={342 \ndash  360},
}

\bib{petruninlectures}{book}{
      author={Petrunin, A.},
       title={Pure metric geometry},
   publisher={Springer Cham},
        date={2023},
        ISBN={978-3-031-39162-0},
}

\bib{pleijel1954study}{article}{
      author={Pleijel, {\AA}.},
       title={A study of certain green's functions with applications in the
  theory of vibrating membranes},
        date={1954},
     journal={Arkiv f{\"o}r matematik},
      volume={2},
      number={6},
       pages={553\ndash 569},
}

\bib{pressley2010elementary}{book}{
      author={Pressley, A.~N.},
       title={Elementary differential geometry},
   publisher={Springer Science \& Business Media},
        date={2010},
}

\bib{Ritz1909}{article}{
      author={Ritz, W.},
       title={Über eine neue methode zur lösung gewisser variationsprobleme
  der mathematischen physik.},
    language={ger},
        date={1909},
     journal={Journal für die reine und angewandte Mathematik},
      volume={135},
       pages={1\ndash 61},
         url={http://eudml.org/doc/149295},
}

\bib{rozenblum1973}{incollection}{
      author={Rozenbljum, G.~V.},
       title={The calculation of the spectral asymptotics for the {L}aplace
  operator in domains of infinite measure},
        date={1973},
   booktitle={Problems of mathematical analysis, {N}o. 4: {I}ntegral and
  differential operators. {D}ifferential equations ({R}ussian)},
   publisher={Izdat. Leningrad. Univ., Leningrad},
       pages={95\ndash 106, 144},
      review={\MR{352729}},
}

\bib{Rayleigh2011}{book}{
      author={S.~B.~Rayleigh, J.~William},
       title={The theory of sound},
      series={Cambridge Library Collection - Physical Sciences},
   publisher={Cambridge University Press},
        date={2011},
}

\bib{schiemann1990beispiel}{article}{
      author={Schiemann, A.},
       title={Ein beispiel positiv definiter quadratischer formen der dimension
  4 mit gleichen darstellungszahlen},
        date={1990},
     journal={Archiv der Mathematik},
      volume={54},
      number={4},
       pages={372\ndash 375},
}

\bib{schiemann1994ternare}{book}{
      author={Schiemann, A.},
       title={Tern{\"a}re positiv definite quadratische formen mit gleichen
  darstellungszahlen},
   publisher={Universit{\"a}t Bonn},
        date={1994},
      number={268},
}

\bib{schroeder1999introduction}{misc}{
      author={Schroeder, D.~V.},
       title={An introduction to thermal physics},
   publisher={American Association of Physics Teachers},
        date={1999},
}

\bib{schueth1999continuous}{article}{
      author={Schueth, D.},
       title={Continuous families of isospectral metrics on simply connected
  manifolds},
        date={1999},
     journal={Annals of mathematics},
       pages={287\ndash 308},
}

\bib{schueth2001isospectral}{article}{
      author={Schueth, D.},
       title={Isospectral manifolds with different local geometries},
        date={2001},
}

\bib{serre2000matrices}{article}{
      author={Serre, D.},
       title={Matrices: Theory and applications. 2002},
        date={2000},
     journal={Graduate texts in mathematics},
}

\bib{shiota1991theta}{article}{
      author={Shiota, K.},
       title={On theta series and the splitting of $s\_2 (\backslash \gamma_0
  (q)) $},
        date={1991},
     journal={Journal of Mathematics of Kyoto University},
      volume={31},
      number={4},
       pages={909\ndash 930},
}

\bib{sunada1985riemannian}{article}{
      author={Sunada, T.},
       title={Riemannian coverings and isospectral manifolds},
        date={1985},
     journal={Annals of Mathematics},
      volume={121},
      number={1},
       pages={169\ndash 186},
}

\bib{tanaka1966selberg}{article}{
      author={Tanaka, S.},
       title={Selberg's trace formula and spectrum},
        date={1966},
}

\bib{tipler2007modern}{book}{
      author={Tipler, P.~A.},
      author={Llewellyn, R.},
       title={Modern physics},
   publisher={Macmillan Higher Education},
        date={2007},
}

\bib{urakawa1982bounded}{inproceedings}{
      author={Urakawa, H.},
       title={Bounded domains which are isospectral but not congruent},
        date={1982},
   booktitle={Annales scientifiques de l'{\'e}cole normale sup{\'e}rieure},
      volume={15},
       pages={441\ndash 456},
}

\bib{van1988heat}{article}{
      author={van~den Berg, M.},
      author={Srisatkunarajah, S.},
       title={Heat equation for a region in $r^2$ with a polygonal boundary},
        date={1988},
     journal={Journal of the London Mathematical Society},
      volume={2},
      number={1},
       pages={119\ndash 127},
}

\bib{vigneras1980varietes}{article}{
      author={Vign{\'e}ras, M.},
       title={Vari{\'e}t{\'e}s riemanniennes isospectrales et non
  isom{\'e}triques},
        date={1980},
     journal={Annals of Mathematics},
      volume={112},
      number={1},
       pages={21\ndash 32},
}

\bib{watanabe2000plane}{article}{
      author={Watanabe, K.},
       title={Plane domains which are spectrally determined},
        date={2000},
     journal={Annals of Global Analysis and Geometry},
      volume={18},
       pages={447\ndash 475},
}

\bib{watson1922treatise}{book}{
      author={Watson, G.~N.},
       title={A treatise on the theory of bessel functions},
   publisher={The University Press},
        date={1922},
      volume={3},
}

\bib{weyl1912asymptotische}{article}{
      author={Weyl, H.},
       title={Das asymptotische verteilungsgesetz der eigenwerte linearer
  partieller differentialgleichungen (mit einer anwendung auf die theorie der
  hohlraumstrahlung)},
        date={1912},
     journal={Mathematische Annalen},
      volume={71},
      number={4},
       pages={441\ndash 479},
}

\bib{wolpert1978eigenvalue}{article}{
      author={Wolpert, S.},
       title={The eigenvalue spectrum as moduli for flat tori},
        date={1978},
     journal={Transactions of the American Mathematical Society},
      volume={244},
       pages={313\ndash 321},
}

\bib{zelditch1998normal}{article}{
      author={Zelditch, S.},
       title={Normal form of the wave group and inverse spectral theory},
        date={1998},
     journal={Journ{\'e}es {\'E}quations aux d{\'e}riv{\'e}es partielles},
       pages={1\ndash 18},
}

\bib{zelditch1999spectral}{article}{
      author={Zelditch, S.},
       title={Spectral determination of analytic bi-axisymmetric plane
  domains},
        date={1999},
     journal={Mathematical Research Letters},
      volume={6},
      number={4},
       pages={457\ndash 464},
}

\end{biblist}
\end{bibdiv}

\end{document}